\newcommand\A{\mathrm{A}} \newcommand\AGL{\mathrm{AGL}} \newcommand\AGaL{\mathrm{A\Gamma L}} \newcommand\Alt{\mathrm{Alt}} \newcommand\ASL{\mathrm{ASL}} \newcommand\ASiL{\mathrm{A\Sigma L}} \newcommand\Aut{\mathrm{Aut}}
 \newcommand\bbF{\mathbb{F}} 
\newcommand\C{\mathrm{C}} \newcommand\Cen{\mathbf{C}}   \newcommand\calO{\mathcal{O}}  \newcommand\Cay{\mathrm{Cay}}  \newcommand\Cos{\mathrm{Cos}}
  \newcommand\Fix{\mathrm{Fix}}
\newcommand\G{\mathrm{G}} \newcommand\GaL{\mathrm{\Gamma L}}    \newcommand\GL{\mathrm{GL}} \newcommand\GO{\mathrm{O}}
\newcommand\K{\mathsf{K}}
\newcommand\M{\mathrm{M}} \newcommand\magma{{\sc Magma}}
\newcommand\Nor{\mathbf{N}}
\newcommand\Out{\mathrm{Out}}
  \newcommand\PGL{\mathrm{PGL}} \newcommand\PGaL{\mathrm{P\Gamma L}}    \newcommand\POm{\mathrm{P\Omega}} \newcommand\PSL{\mathrm{PSL}}    \newcommand\PSp{\mathrm{PSp}} \newcommand\PSU{\mathrm{PSU}}
\newcommand\Q{\mathrm{Q}}
  \newcommand\Ree{\mathrm{Ree}} 
 \newcommand\SL{\mathrm{SL}}  \newcommand\Soc{\mathrm{Soc}} \newcommand\Sp{\mathrm{Sp}}    \newcommand\Sy{\mathrm{S}} \newcommand\Sym{\mathrm{Sym}} \newcommand\Sz{\mathrm{Sz}}
\newcommand\val{\mathsf{Val}}
\newcommand\Z{\mathbf{Z}} \newcommand\ZZ{\mathrm{C}}
\newtheorem{theorem}{Theorem}[section]
\newtheorem{lemma}[theorem]{Lemma}
\newtheorem{proposition}[theorem]{Proposition}
\newtheorem{corollary}[theorem]{Corollary}
\theoremstyle{definition}
\newtheorem{construction}[theorem]{Construction}
\def\l{\langle} \def\r{\rangle}
\def\Ga{\Gamma}   \def\Ome{\Omega}
\def\a{\alpha} \def\b{\beta}  \def\s{\sigma}
\begin{document}

\title[2-Arc-transitive alternating Cayley graphs]
{2-Arc-transitive Cayley graphs on alternating groups}

\author[Pan]{Jiangmin Pan}
\address{J. M. Pan\\
School of Statistics and Mathematics\\
Yunnan University of Finance and Economics\\
Kunming \\
P. R. China}
\email{jmpan@ynu.edu.cn}

\author[Xia]{Binzhou Xia}
\address{B. Z. Xia\\
School of Mathematics and Statistics\\
The University of Melbourne\\
Parkville, VIC 3010\\
Australia}
\email{binzhoux@unimelb.edu.au}

\author[Yin]{Fugang Yin}
\address{F. G. Yin\\
Department of Mathematics\\
Beijing Jiaotong University\\
Beijing 100044, P. R. China}
\email{18118010@bjtu.edu.cn}


\begin{abstract}
An interesting fact is that most of the known connected $2$-arc-transitive nonnormal Cayley graphs of small valency on finite simple groups are $(\mathrm{A}_{n+1},2)$-arc-transitive Cayley graphs on $\mathrm{A}_n$. This motivates the study of $2$-arc-transitive Cayley graphs on $\mathrm{A}_n$ for arbitrary valency. In this paper, we characterize the automorphism groups of such graphs.  In particular, we show that for a non-complete $(G,2)$-arc-transitive Cayley graph on $\mathrm{A}_n$ with $G$ almost simple, the socle of $G$ is either $\mathrm{A}_{n+1}$ or $\mathrm{A}_{n+2}$. We also construct the first infinite family of $(\mathrm{A}_{n+2},2)$-arc-transitive Cayley graphs on $\mathrm{A}_n$.

\vskip0.1in
\noindent\textit{Key words:} 2-arc-transitive; Cayley graph; alternating group; automorphism group

\noindent\textit{MSC2020:} 05E18, 05C25
\end{abstract}

\maketitle

\section{Introduction}

All graphs considered in this paper are finite, simple and undirected. 
For a positive integer $s$, an \emph{$s$-arc} of a graph is a $(s+1)$-tuple of vertices $(v_0,v_1,...,v_s)$ where $v_i$ is adjacent to $v_{i+1}$ for $0\leq i\leq s-1$ and $v_{i-1}\neq v_{i+1}$ for $1\leq i\leq s-1$. 
Let $\Ga$ be a graph and let $G$ be a subgroup of the full automorphism group $\Aut(\Ga)$ of $\Ga$. 
The graph $\Ga$ is said to be \emph{$(G,s)$-arc-transitive} if $G$ is transitive on the set of $s$-arcs. 
For short, we say that $\Ga$ is \emph{$s$-arc-transitive} if it is $(\Aut(\Ga),s)$-arc-transitive, and \emph{arc-transitive} if it is $(\Aut(\Ga),1)$-arc-transitive.
Note that for regular graphs $s$-arc-transitivity with $s\ge 2$ implies $(s-1)$-arc-transitivity.
Tutte~\cite{Tutte1947} in 1947 proved that there exist no finite $s$-arc-transitive cubic graphs for $s \geq 6$.
Since this remarkable result, $s$-arc-transitive graphs have attracted considerable attention in the literature.

Let $H$ be a group and let $S$ be an inverse-closed nonempty subset of $H\setminus\{1\}$. The \emph{Cayley graph} $\Cay(H,S)$ on $H$ with \emph{connection set} $S$ is defined to be the graph with vertex set $H$ such that $x,y\in H$ are adjacent if and only if $yx^{-1}\in S$.

Some special classes of  $2$-arc-transitive Cayley graphs  have been  classified or characterized  in the literature. See  Alspach, Conder, Maru\v{s}i\v{c} and Xu \cite{ACMX96} for $2$-arc-transitive Cayley graphs on cyclic groups, Ivanov and Praeger  \cite {IP} and Li and Pan  \cite{LP08} for $2$-arc-transitive Cayley graphs on abelian groups, Maru\v{s}i\v{c} and Du \cite{DMM08} for $2$-arc-transitive Cayley graphs on dihedral groups,  and the recent work of Li and Xia \cite{LX20+} on $2$-arc-transitive Cayley graphs on solvable groups.

For a Cayley graph $\Cay(H,S)$, denote by $R_H(H)$ the subgroup of $\Sym(H)$ induced by the right multiplication of $H$ on itself. Clearly, $R_H(H)$ is a subgroup of $\Aut(\Cay(H,S))$. The Cayley graph $\Cay(H,S)$ is said to be \emph{normal} if $R_H(H)$ is a normal subgroup of $\Aut(\Cay(H,S))$, and is said to be \emph{nonnormal} otherwise. Nonnormal Cayley graphs are believed to be rare, and it is conjectured by Xu \cite{Xu98} that almost all connected Cayley graphs are normal. Moreover, it is well known that if $\Cay(H,S)$ is a normal Cayley graph on $H$, then $\Aut(\Cay(H,S))=R_H(H){:}\Aut(H,S)$ where 
\[
\Aut(H,S):=\{\s\in\Aut(H)\mid S^{\s}=S\}, 
\]
see Godsil \cite[Lemma 2.1]{G81}.
Hence the symmetries of normal Cayley graphs can be well understood, and consequently more attention in the literature are on nonnormal ones
while investigating Cayley graphs.

The Jordan-H\" older theorem shows that all finite groups can be viewed as group extensions of simple groups (namely their compositor factors).
This fundamental theorem increases the interest of the study of Cayley graphs on simple groups.  
Fang, Praeger and Wang \cite{FPW02} gave a general description of the structures of automorphism groups
of nonnormal Cayley graphs on nonabelian simple groups,
and $s$-arc-transitive nonnormal Cayley graphs on nonabelian simple groups of certain valencies have been  explicitly characterized.
Let $\Ga$ be a connected nonnormal $s$-arc-transitive Cayley graph on a
nonabelian simple group.
Based on a so called `dual action' given in Li \cite{Li96}, Xu, Fang, Wang and Xu \cite{XFWX05,XFWX07}
proved that if the valency $\val(\Ga)=3$ then $\Ga$ is one of two $(\A_{48},5)$-arc-transitive Cayley graphs on $\A_{47}$.
Du and Feng \cite{DF19} proved that if $\val(\Ga)=4$ and $s\ge 2$, then $\Ga$ is an $\A_{n+1}$-arc-transitive Cayley graph on $\A_n$
for $n\in\{24,36,72,144,532,14364\}$ with a unique exception.
Similarly, Du, Feng and Zhou \cite{DFZ17} showed
that if $\val(\Ga)=5$ then $\Ga$ is an $\A_{n+1}$-arc-transitive Cayley graph
on $\A_{n}$ where $n$ is among $11$ possible numbers, and
\cite{PYL19} showed that if $\val(\Ga)=7$ and the vertex stabilizer is solvable
then $\Ga$ is an $\A_{n+1}$-arc-transitive Cayley graph
on $\A_{n}$ with $n\in\{7,21,63,84\}$.
Very recently, Yin, Feng, Zhou and Chen \cite{YFZC21} proved that
if $\val(\Ga)$ is a prime greater than $7$ and the vertex stabilizer is solvable, then $\Ga$
is either an $\A_{n+1}$-arc-transitive Cayley graph on $\A_{n}$ or one of the three exceptions.

From the above mentioned results in \cite{DF19,DFZ17,PYL19,XFWX05,XFWX07,YFZC21} on $s$-arc-transitive nonnormal Cayley graphs on nonabelian simple groups of certain valencies one can observe an interesting phenomenon: most of these graphs turn out to be Cayley graphs on alternating groups.
This motivates a natural problem as follows.


\vskip0.1in
{\noindent{\bf{Problem A.}}
Study $s$-arc-transitive nonnormal Cayley graphs on alternating groups for arbitrary valency.}

\vskip0.1in
In this paper, we give a characterization of the graphs in Problem A with $s=2$. For a group $G$, the socle $\Soc(G)$ of $G$ is the product of all minimal normal subgroups of $G$. For a $G$-vertex transitive graph $\Ga$ and a normal subgroup $M$ of $G$, the quotient graph $\Ga_M$  associated with $M$ is a graph with the set of orbits of $M$ as vertex set, and two orbits $O_1,O_2$ are adjacent if there is an edge in $\Ga$ with two endpoints in $O_1,O_2$ respectively. The graph $\Ga$ is called a \emph{normal cover} of $\Ga_M$ if the valency $\val(\Ga)=\val(\Ga_M)$.

\begin{theorem}\label{th:1}
Let $\Ga$ be a connected $(G,2)$-arc-transitive Cayley graph on an alternating group $H\cong\A_n$ with $n\ge 5$ and $H\leq G$, and let $\a$ be a vertex of $\Ga$. Suppose that $H$ is not normal in $G$. Then   one of the following holds:
\begin{enumerate}[\rm (a)]
\item $\Soc(G)=\A_{n+1}$, $\Soc(G)_\a$ is a regular subgroup of $\A_{n+1}$.
\item $G$ has a normal subgroup $N \cong \A_{n+2}$ such that $H<N$, $\Ga$ is  $(N,2)$-arc-transitive, $N_\a$ is a sharply $2$-transitive subgroup of $\A_{n+2}$, and $N_\a$ acts faithfully on $\Ga(\a)$; in particular, $\val(\Ga)=n+2$ is a prime power.
\item $\Ga=\K_{m}$ with $m=n!/2$, and either $(\Soc(G),\Soc(G)_\a)=(\A_{  m},\A_{m-1})$ or $(G,G_\a,n)=(\PGL_2(59),\AGL_1(11),5)$.
\item $G$ has a nontrivial maximal intransitive normal subgroup $M$ such that $G/M$ is almost simple, $\Ga_M$ is $(G/M,2)$-arc-transitive, and $\Ga$ is a normal cover of $\Ga_M$; moreover, one of the following is true, where $v$ is a vertex of $\Ga_M$:
\begin{enumerate}[\rm (d.1)]
\item $\Soc(G/M)=\A_{n}$ such that $n \geq 16$ is a power of $2$, and $\Soc(G/M)_v$ contains a Sylow $2$-subgroup of $\Soc(G/M)$.
\item $\Soc(G/M)=\A_{n+1}$, and $\Soc(G/M)_v$ is a transitive subgroup of  $\A_{n+1}$;
\item $\Ga_M=\K_m$, $\Soc(G/M)=\A_m$ such that $m$ is the index of a subgroup of $\A_n$, and $\Soc(G/M)_v=\A_{m-1}$.
\end{enumerate}
\end{enumerate}
\end{theorem}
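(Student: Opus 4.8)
The plan is to combine the normal-quotient reduction theory for $2$-arc-transitive graphs with the classification of factorizations of almost simple groups having a factor isomorphic to $\A_n$. Throughout I identify $H=R_H(H)\cong\A_n$, so that $H$ is a \emph{regular} subgroup of $G$, $G=HG_\a$ with $H\cap G_\a=1$, and $G_\a$ acts $2$-transitively on the neighbourhood $\Ga(\a)$ of size $\val(\Ga)$. The first observation is that for any normal subgroup $M\trianglelefteq G$ the intersection $M\cap H$ is normal in the simple group $H$, hence equals $1$ or $H$; since an intransitive $M$ cannot contain the transitive subgroup $H$, every intransitive normal subgroup meets $H$ trivially and is therefore semiregular on $V\Ga$. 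This yields the governing dichotomy: either every nontrivial normal subgroup of $G$ is transitive, so $G$ is quasiprimitive on $V\Ga$, or $G$ has a nontrivial intransitive normal subgroup.

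First I would treat the quasiprimitive case, which should produce (a), (b) and (c). By the Praeger classification of quasiprimitive $2$-arc-transitive graphs, $G$ has O'Nan--Scott type HA, AS, TW or PA. Type HA is excluded at once because its regular abelian socle would force $|V\Ga|=n!/2$ to be a prime power, impossible for $n\ge5$. Ruling out TW and PA is more delicate: here I would exploit that $G$ contains the \emph{simple} regular subgroup $H\cong\A_n$, which cannot be compatibly aligned with the regular (respectively product-action) structure carried by a socle $T^k$ with $k\ge2$ supporting a $2$-arc-transitive action. This leaves type AS, so $G$ is almost simple with socle $T$. Since $|H|=n!/2$ is far too large to embed into the soluble group $\Out(T)$, the Schreier property gives $H\le T$, and intersecting $G=HG_\a$ with $T$ produces an \emph{exact} factorization $T=H\,T_\a$ with $H\cap T_\a=1$ (exactness because $H$ is regular). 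Feeding this into the Liebeck--Praeger--Saxl list of factorizations of simple groups with a factor $\A_n$, the only possibilities are $T=\A_{n+1}$ with $T_\a$ a regular subgroup of degree $n+1$ (case (a)), $T=\A_{n+2}$ with $T_\a$ a sharply $2$-transitive group of degree $n+2$ (case (b)), or the $2$-transitive configurations giving $\Ga=\K_{n!/2}$ (case (c)). In case (b) the $2$-arc-transitivity forces $T_\a$ to act faithfully and $2$-transitively on $\Ga(\a)$, so $\val(\Ga)=n+2$ is a prime power by Zassenhaus' theorem; in case (c) the complete-graph hypothesis means $G$ is $2$-transitive, and sifting the classification of finite $2$-transitive groups for those with a regular subgroup isomorphic to $\A_n$ leaves $\Soc(G)=\A_m$ together with the single exceptional pair $(\PGL_2(59),\AGL_1(11))$.

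Next I would treat the non-quasiprimitive case, yielding (d). Let $M$ be a maximal intransitive normal subgroup of $G$; by the first paragraph $M$ is semiregular, and after checking that $M$ has at least three orbits (the two-orbit, bipartite, possibility being disposed of separately) the normal-quotient theorem shows that $M$ is exactly the kernel of the action of $G$ on $\Ga_M$, that $\Ga_M$ is $(G/M,2)$-arc-transitive, and that $\Ga$ is a normal cover of $\Ga_M$. Maximality of $M$ makes $G/M$ quasiprimitive on $V\Ga_M$, and the same elimination of the affine, twisted-wreath and product-action types (now using that $G/M$ contains the transitive nonabelian simple subgroup $\overline H=HM/M\cong\A_n$ and that $\Ga$ is a Cayley graph on $\A_n$) forces $G/M$ to be almost simple with socle $\overline T=\Soc(G/M)$, whence $\overline H\le\overline T$ as before. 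The crucial difference is that $\overline H$ is transitive but \emph{no longer regular}, so intersecting $G/M=\overline H\,(G/M)_v$ with $\overline T$ yields a factorization $\overline T=\overline H\,\overline T_v$ that need not be exact; the Liebeck--Praeger--Saxl analysis now admits the three outcomes $\overline T=\A_n$, $\A_{n+1}$ or $\A_m$, which are precisely (d.1), (d.2), (d.3). In the subcase $\overline T=\A_n$ one analyses the transitive actions of $\A_n$ on itself supporting such a graph, and the $2$-transitivity of $\overline T_v$ on $\Ga_M(v)$ pins down that $n\ge16$ is a power of $2$ and that $\overline T_v$ contains a Sylow $2$-subgroup.

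The main obstacle will be the factorization bookkeeping in type AS, carried out uniformly for both the regular (cases (a)--(c)) and merely transitive (case (d)) embeddings of $\A_n$: one must pass from a subgroup $\A_n$ of $T$ to the maximal subgroup of $T$ containing it, track how the exactness or inexactness of the factorization interacts with the Liebeck--Praeger--Saxl tables, and in each surviving case translate the $2$-arc-transitivity of $\Ga$ into the stated structural constraints on the stabilizer (regular, sharply $2$-transitive, transitive, or a Sylow-containing subgroup of $\A_n$). Subsidiary difficulties are the clean elimination of the TW and PA types in the presence of a simple regular subgroup, the bipartite two-orbit borderline in the reduction step, and the computational identification of the sporadic $\PGL_2(59)$ example in the complete-graph case.
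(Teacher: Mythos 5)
Your overall architecture (semiregularity of intransitive normal subgroups, normal-quotient reduction to a quasiprimitive group containing a transitive $\A_n$, then a factorization analysis in the almost simple case) is the same as the paper's, but there are two genuine gaps.

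First, the assertion that the factorization $T=H\,T_\a$ (or $\overline T=\overline H\,\overline T_v$) ``leaves only'' $T=\A_{n+1}$, $\A_{n+2}$ or $\A_m$ is not what the classification delivers. The classification of quasiprimitive groups with a transitive $\A_n$ (Proposition~\ref{Xia}) also produces $L=\A_{n+k}$ with $3\le k\le 5$ and $L_\a$ $k$-transitive (in the regular case these are sharply $k$-transitive, e.g.\ $\PGL_2(q)$-type groups for $k=3$ and $\M_{11},\M_{12}$ for $k=4,5$, which genuinely exist inside $\A_{n+k}$ as exact factorization partners), plus some two dozen exceptional rows with socles such as $\M_{12}$, $\PSL_2(p)$, $\Sp_6(2)$, $\Sp_8(2)$, $\Omega_7(3)$, $\Omega_8^{\pm}$-type groups and $\Omega_{10}^-(2)$. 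Eliminating these is the bulk of the paper's work (Lemmas~\ref{AffineFaithful}, \ref{k>2}, \ref{Row21}, \ref{Small-gps}), and it is done not by factorization bookkeeping but by graph-theoretic constraints: faithfulness of $G_\a$ on $\Ga(\a)$ via Weiss's theorem, the connectivity condition $\langle L_\a,\Nor_L(L_{\a\b})\rangle=L$ from Lemma~\ref{Sab64}, suborbit computations, and a delicate linear-algebra argument in the affine case. Your proposal flags ``factorization bookkeeping'' as the obstacle but gives no mechanism for killing the $k\ge3$ and sporadic candidates. Relatedly, your dismissal of the PA type is too quick: there \emph{is} a product-action quasiprimitive group containing a transitive $\A_6$ (socle $\A_6\times\A_6$ on $36$ points); it is excluded by a local argument showing $G_\a^{\Ga(\a)}$ could not be $2$-transitive, not by incompatibility with the subgroup structure.

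Second, in the non-quasiprimitive case your list of possible socles for $G/M$ ($\A_n$, $\A_{n+1}$, $\A_m$) silently omits $\A_{n+2}$, which is a perfectly possible outcome of the quasiprimitive analysis applied to $\Ga_M$. The theorem handles this by showing that when $\Soc(G/M)\cong\A_{n+2}$ one can reconstruct an honest normal subgroup $N\cong\A_{n+2}$ of $G$ itself containing $H$, so that conclusion (b) (not (d)) holds. This requires a nontrivial argument: a bound $|M|<2^{n-2}$ coming from the order of $2$-transitive groups of degree $n+2$ (Lemma~\ref{lm:order2trans}), the splitting $HM=M\times H$ (Lemma~\ref{lm:miniM}), and a Schur-multiplier/covering-group computation to split $M$ off from the preimage of $\A_{n+2}$. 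Without this step the statement of case (b) --- in particular that $N$ is normal in $G$ and that $\val(\Ga)=n+2$, not merely that $G/M$ has socle $\A_{n+2}$ --- is not established. The derivation of (d.1) likewise rests on the structure of the extension $M{:}H$ and the bound $m\ge n-2$ for faithful $\A_n$-modules, rather than on the $2$-transitivity of $\overline T_v$ on $\Ga_M(v)$ as you suggest.
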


A group $G$ is said to be \emph{almost simple} if $\Soc(G)$ is a nonabelian simple group. For Problem A, if we impose that the $2$-arc-transitive automorphism group of the graph is almost simple, then we have the following corollary of Theorem~\ref{th:1}.

\begin{corollary}\label{Coro-1}
Let $\Ga$ be a connected $(G,2)$-arc-transitive Cayley graph on an alternating group $H\cong\A_n$ with $n\ge 5$ and $H\leq G$, and let $\a$ be a vertex of $\Ga$. Suppose that $G$ is almost simple with socle $T$. Then one of the following holds:
\begin{enumerate}[\rm (a)]
\item $T=\A_{n+1}$ and $T_\a$ is a regular subgroup of $ \A_{n+1}$;
\item $T=\A_{n+2}$ such that $\Ga$ is $(T,2)$-arc-transitive, $T_\a$ is a sharply $2$-transitive subgroup of $\A_{n+2}$ and acts faithfully on $\Ga(\a)$, and $\val(\Ga)=n+2$ is a prime power;
\item $\Ga=\K_{n!/2}$.
\end{enumerate}
 \end{corollary}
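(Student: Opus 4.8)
The plan is to derive Corollary~\ref{Coro-1} directly from Theorem~\ref{th:1}, the only extra input being the hypothesis that $G$ is almost simple with socle $T$. Under this hypothesis $T$ is the unique minimal normal subgroup of $G$ and $\Cen_G(T)=1$, so I first record the elementary fact that every nontrivial normal subgroup of $G$ contains $T$: if $M\trianglelefteq G$ with $M\neq1$, then $M\cap T\trianglelefteq T$ is either $1$ or $T$, and $M\cap T=1$ would force $[M,T]\le M\cap T=1$, hence $M\le\Cen_G(T)=1$, a contradiction; thus $T\le M$. With this in hand the four alternatives of Theorem~\ref{th:1} will be matched to the three alternatives of the corollary.

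First I would eliminate case (d). There $G$ carries a nontrivial normal subgroup $M$ with $G/M$ almost simple. By the observation above $T\le M$, so $G/M$ is a quotient of $G/T$, which embeds in $\Out(T)$ and is therefore solvable by the Schreier conjecture (a consequence of the classification of finite simple groups). A solvable group cannot be almost simple, and this contradiction rules out (d).

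It then remains to translate cases (a)--(c). In case (a) we have $\Soc(G)=\A_{n+1}$, that is $T=\A_{n+1}$, with $T_\a=\Soc(G)_\a$ regular in $\A_{n+1}$, which is exactly conclusion (a) of the corollary. In case (b) the normal subgroup $N\cong\A_{n+2}$ is simple (as $n+2\ge7$), hence a minimal normal subgroup of $G$; since $T$ is the unique minimal normal subgroup we get $N=T\cong\A_{n+2}$, and all the stated properties of $N$---that $\Ga$ is $(N,2)$-arc-transitive, that $N_\a$ is a sharply $2$-transitive subgroup of $\A_{n+2}$ acting faithfully on $\Ga(\a)$, and that $\val(\Ga)=n+2$ is a prime power---are precisely conclusion (b) with $N$ renamed $T$. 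Finally, in case (c) both subcases give $\Ga=\K_{n!/2}$ (and in both $G$ is indeed almost simple, with socle $\A_{n!/2}$ or $\PSL_2(59)$ respectively), which is conclusion (c). This exhausts the possibilities.

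Since the substantive work is already carried out in Theorem~\ref{th:1}, there is no real obstacle here; the only points requiring a moment's care are the identification of any simple normal subgroup with the socle and the appeal to the solvability of $\Out(T)$ to discard case (d).
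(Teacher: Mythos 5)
Your derivation is essentially the intended one: the paper states Corollary~\ref{Coro-1} without a written proof, as an immediate consequence of Theorem~\ref{th:1}, and the two substantive observations you supply --- that every nontrivial normal subgroup of an almost simple group contains the socle, and that case~(d) of the theorem is impossible because $G/M$ would be a quotient of $G/T\le\Out(T)$ and hence solvable --- are exactly what is needed to match cases (a)--(c) of the theorem with (a)--(c) of the corollary. The identification $N=T$ in case~(b) via uniqueness of the minimal normal subgroup is also correct.

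One loose end: Theorem~\ref{th:1} carries the hypothesis that $H$ is not normal in $G$, which the corollary does not restate, so before invoking the theorem you should dispose of the possibility $H\trianglelefteq G$. This is quick: a normal simple subgroup of an almost simple group equals the socle, so $H=T=\A_n$ would give $|G_\a|=|G/T|\le|\Out(\A_n)|\le 4$; since $G_\a^{\Ga(\a)}$ is $2$-transitive of degree $d=\val(\Ga)$, the divisibility $d(d-1)\mid|G_\a|$ forces $d=2$, making $\Ga$ a cycle whose automorphism group is dihedral and hence cannot contain $\A_n$ with $n\ge5$. Adding this sentence closes the only gap in your argument.
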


As introduced above, there are a number of $(\A_{n+1},2)$-arc-transitive Cayley graphs on $\A_n$ known in the literature, which give rise to examples of case~(a) in Corollary~\ref{Coro-1}. However, to the best of our knowledge, there is no example of $(\A_{n+2},2)$-arc-transitive Cayley graph on $\A_n$ in the literature. Note that the vertex stabilizer $T_\a$ in case~(b) of Corollary~\ref{Coro-1}, as a sharply $2$-transitive group, is an affine group with degree a prime power $p^d$, and is either a subgroup of $\AGaL_1(p^d)$ or with $p^d \in \{ 5^2,7^2,11^2,19^2,23^2,29^2,59^2\}$ (see for example \cite[Chapter XII \S9]{Huppert82b}).
In this paper, we construct the first infinite family of $(\A_{n+2},2)$-arc-transitive Cayley graphs on $\A_n$ with vertex stabilizer a subgroup of $\AGaL_1(p^d)$. 

\begin{construction}\label{Exa2}
Let $q$ be a prime power such that $q\equiv3\pmod{4}$, and let $\tau\colon v\mapsto v^q$ be the filed automorphism of $\bbF_{q^2}$ of order $2$. Take $G=\Sym(\bbF_{q^2})$, $K=\AGL_1(q^2){:}\l\tau\r\le\AGaL_1(q^2)<G$, and $g\in G$ such that $0^g=0$ and $v^g=v^{-1}$ for all $v\in\bbF_{q^2}^\times$. Then let $\Ga=\Cos(G,K,g)$ be the coset graph (see Subsection~\ref{subsec1} for definition) given by the triple $(G,K,g)$.
\end{construction}

\begin{theorem}\label{Thm2}
Let $q$, $G$ and $\Ga$ be as in Construction~$\ref{Exa2}$. Then $\Ga$ is a connected $(\A_{q^2},2)$-arc-transitive Cayley graph on $\A_{q^2-2}$ with valency $q^2$.
\end{theorem}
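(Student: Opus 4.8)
The plan is to analyse $\Ga=\Cos(G,K,g)$ through its base vertex $\a=K$, with $G=\Sym(\bbF_{q^2})$, reading off the valency, local action, a regular subgroup and connectivity from the $2$-transitive action of $K$ on $\bbF_{q^2}$ together with a parity count driven by $q\equiv3\pmod4$. Throughout I use that $\val(\Ga)=|K:K\cap K^g|$, that $G_\a=K$ acts on $\Ga(\a)$ as on the cosets of $K\cap K^g$, and that $\Ga$ is connected iff $\l K,g\r=G$. For the valency, let $K_0=\bbF_{q^2}^\times{:}\l\tau\r$ be the stabiliser in $K$ of $0$, of order $2(q^2-1)$. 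Since $g$ fixes $0$, conjugates $v\mapsto av$ to $v\mapsto a^{-1}v$, and commutes with $\tau$, it normalises $K_0$, so $K_0\le K\cap K^g$; conversely, writing $h\in K$ as $v\mapsto av^\s+b$ one computes $ghg\colon v\mapsto v^\s/(a+bv^\s)$, which is affine–semilinear only when $b=0$, forcing $h\in K_0$. Hence $K\cap K^g=K_0$, so $\val(\Ga)=|K:K_0|=q^2$, and $G_\a=K$ acts on $\Ga(\a)$ as in the natural $2$-transitive action of $K\ge\AGL_1(q^2)$ on $\bbF_{q^2}$.

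Next I would establish $(\A_{q^2},2)$-arc-transitivity, and this is where $q\equiv3\pmod4$ is used. A cycle count shows $\tau$ is odd precisely when $q\equiv3\pmod4$, while all translations are even and $v\mapsto av$ is even iff $a$ is a square; thus $K_T:=K\cap\A_{q^2}$ has index $2$ in $K$ and equals $\{v\mapsto av+b:a\ \square\}\cup\{v\mapsto av^q+b:a\ \text{non-}\square\}$. I would check that its one-point stabiliser $(K_T)_0$, of order $q^2-1$, is transitive on $\bbF_{q^2}^\times$ (using that $v^{q-1}$ is always a square), so $K_T$ is sharply $2$-transitive on $\bbF_{q^2}\cong\Ga(\a)$. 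Since $\tau\notin\A_{q^2}$ we get $\A_{q^2}K=G$, so $T:=\A_{q^2}$ is vertex-transitive with $T_\a=K_T$; being vertex-transitive with $T_\a$ two-transitive on $\Ga(\a)$, it is transitive on $2$-arcs. (The hypothesis $q\equiv3$ is genuinely needed: for $q\equiv1$ the stabiliser $(K_T)_0$ preserves each square coset of $\bbF_{q^2}^\times$ and $K_T$ is not $2$-transitive.) For the Cayley presentation, put $R=\Alt(\bbF_{q^2}\setminus\{0,1\})\cong\A_{q^2-2}$. As every two-point stabiliser in $K$ is conjugate to $\l\tau\r$ and $\tau$ is odd, $K_{0,1}\cap\A_{q^2}=1$, so $R\cap K=1$; comparing orders gives $|RK|=(q^2)!=|G|$, and $|R|=(q^2-2)!/2=|G:K|$, so $R$ is regular on the vertex set and $\Ga$ is a Cayley graph on $R\cong\A_{q^2-2}$.

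The remaining, and hardest, step is connectivity, i.e.\ $L:=\l K,g\r=G$. Here $L$ is $2$-transitive of degree $q^2$ (as $L\ge\AGL_1(q^2)$), contains the regular elementary abelian translation group $N$ and a Singer $(q^2-1)$-cycle $c$, and $c$ is odd, so $L\not\le\A_{q^2}$. I would invoke the classification of finite $2$-transitive groups. If $L$ were affine, its socle $V$ would be a regular elementary abelian normal subgroup of order $q^2$; but $V\trianglelefteq\AGL_1(q^2)$ forces $V=N$, whence $L\le\Nor_G(N)=\AGL_{2f}(p)$ and $g\in\GL_{2f}(p)$ would have to be additive, which $v\mapsto v^{-1}$ is not. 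If $L$ were almost simple, its socle is a $2$-transitive simple group whose degree $q^2$ is at once a perfect square and a prime power $p^{2f}$; the Suzuki, Ree, unitary and symplectic degrees are excluded by elementary number theory, and the projective degrees $(r^d-1)/(r-1)=q^2$ are excluded using that $L$ contains the elementary abelian regular $N$ and the element $c$ of order $q^2-1$, which the relevant cyclic Sylow $p$-subgroups and the element orders of $\PSL_d(r)$ cannot accommodate. This leaves $\Soc(L)=\A_{q^2}$, and then the odd $c$ forces $L=G$, so $\Ga$ is connected. The main obstacle is exactly this elimination of the almost simple list — in particular the projective family and the small case $q^2=9$ — where the precise structure of $N$, $c$ and $g$ must be brought to bear.
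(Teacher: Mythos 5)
Your proposal follows the same overall route as the paper's proof: identify $K\cap K^g=K_0$ to get valency $q^2$ and the natural $2$-transitive local action, use the parity of $\tau$ (odd exactly because $q\equiv3\pmod4$) and of the scalings to show $K\cap\Alt(\bbF_{q^2})$ is still $2$-transitive on $\Ga(\a)$, exhibit $\Alt(\bbF_{q^2})_{0,1}\cong\A_{q^2-2}$ as a regular subgroup, and settle connectivity via the classification of $2$-transitive groups. The differences are local. For the intersection, you compute $ghg\colon v\mapsto v^{\s}/(a+bv^{\s})$ directly and argue $b=0$; the paper instead observes $K_0\le K\cap K^g<K$ (the strict inequality coming from $g\notin\Nor_G(K)=\AGaL_1(q^2)$, proved by noting that $g\in N$ would force $g=\tau$ and hence $\omega^{-1}=\omega^q$) and invokes maximality of $K_0$ in $K$; both work, though your rational-function identity needs a word about the exceptional points $v=0$ and the pole before comparing coefficients. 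For connectivity, the paper asserts the dichotomy ``$X=\Sym(\bbF_{q^2})$ or $X\le\AGL_{2f}(p)$'' directly from the classification and kills the affine case with Huppert's theorem $\Nor_{\GL_{2f}(p)}(\GL_1(q^2))=\GaL_1(q^2)$ together with $g\notin\AGaL_1(q^2)$; your affine argument (inversion is not additive, since $v^{q^2-2}$ is not a linearized polynomial for odd $p$) is more elementary and self-contained. Your almost-simple elimination is the one step left as a sketch, but it is the right sketch: the only nontrivial survivor of the degree condition $q^2=p^{2f}$ is $\PSL_5(3)$ of degree $121=(3^5-1)/2$ with $q=11\equiv3\pmod 4$, and it dies exactly as you propose because its Sylow $11$-subgroup is cyclic of order $121$ and so cannot contain the regular translation group $\C_{11}^2$; similarly $q^2=9$ is ruled out since $\PGaL_2(8)$ has no element of order $8$ while $\langle\omega\rangle$ does. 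So the proposal is correct and completable; it is in fact more explicit than the paper about why no almost simple group other than $\Sym(\bbF_{q^2})$ can occur, at the cost of having to finish that case analysis.
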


The rest of the paper is organized as follows. 
After a preliminary section, we deal with the vertex-quasiprimitive case of Theorems~\ref{th:1} in Section~\ref{sec1}, which leads to a proof of Theorems~\ref{th:1} in Section~\ref{sec2}.
Then in Section~\ref{sec3} we  prove Theorem~\ref{Thm2}.

\section{Preliminaries}

The notations used in this paper are standard.
As in \cite{Atlas},
we sometimes use $n$ to denote a cyclic group of order $n$,
use $[n]$ to denote a group of order $n$, and use $p^n$ with $p$ a prime to denote the elementary abelian group of order $p^n$.
For a positive integer $n$ and prime number $p$, let denote by $n_p$ the largest $p$-power dividing $n$. 
For a graph $\Ga$, let $V(\Ga)$ denote the vertex set of $\Ga$, and let $\Ga(v)$ denote the set of neighborhoods of $v \in V(\Ga)$.

\subsection{$2$-arc-transitive graphs}

The following result shows that the $s$-arc-transitivity of graph with $s\geq2$ is inherited by normal quotients, see
Praeger~\cite[Theorem 4.1]{Praeger92}.

\begin{theorem}\label{th:Praeger}
Let $\Ga$ be a connected $(G,s)$-arc-transitive graph with $s\geq  2$. Suppose that $G$ has a  normal subgroup  $M$ with at least three orbits on $V(\Ga)$.
Then $\Ga_M$ is $(G/M,s)$-arc-transitive with $(G/M)_v \cong G_\a$, where $v \in V(\Ga_M)$ and $\a \in V(\Ga)$.
Moreover, $M$ is semiregular on $V(\Ga)$, and $\Ga$ is a normal cover of $\Ga_M$.
\end{theorem}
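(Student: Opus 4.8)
\textbf{Proof proposal for Theorem~\ref{th:Praeger}.}

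The plan is to exploit the defining feature of $s$-arc-transitivity with $s \geq 2$, namely that the local action $G_\a^{\Ga(\a)}$ is $2$-transitive on the neighbourhood $\Ga(\a)$, together with the hypothesis that the normal subgroup $M$ has at least three orbits. First I would establish semiregularity of $M$. Since $\Ga$ is $(G,1)$-arc-transitive, $G$ is transitive on $V(\Ga)$, so all point stabilizers $M_\a = M \cap G_\a$ are conjugate in $G$; in particular they all have the same order. If $M_\a \neq 1$ for some vertex $\a$, then because $M \trianglelefteq G$ and $G$ is arc-transitive, $M_\a$ must act nontrivially on the neighbourhood $\Ga(\a)$ (otherwise $M_\a$ fixes $\a$ and all its neighbours, and connectivity would force $M_\a$ to fix every vertex, contradicting $M_\a \neq 1$). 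The key leverage is that $G_\a^{\Ga(\a)}$ is $2$-transitive and $M_\a^{\Ga(\a)} \trianglelefteq G_\a^{\Ga(\a)}$ is normal; a nontrivial normal subgroup of a $2$-transitive group is transitive. Hence $M_\a$ is transitive on $\Ga(\a)$, which by arc-transitivity and connectivity propagates to show $M$ is transitive on $V(\Ga)$ --- forcing $M$ to have a single orbit. This contradicts the hypothesis that $M$ has at least three orbits, so $M_\a = 1$ for every $\a$, i.e.\ $M$ is semiregular.

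Next I would verify that $\Ga_M$ is well-behaved as a simple graph and that $\val(\Ga_M) = \val(\Ga)$. Because $M$ is semiregular and acts nontrivially with at least three orbits, the quotient $\Ga_M$ has at least three vertices and no loops; the content to check is that the projection $\Ga \to \Ga_M$ restricts to a bijection between $\Ga(\a)$ and $\Ga_M(v)$ where $v = \a^M$. Surjectivity is immediate from the definition of $\Ga_M$. For injectivity I would argue that if two distinct neighbours $\b_1, \b_2 \in \Ga(\a)$ lay in the same $M$-orbit, then some $m \in M$ with $\b_1^m = \b_2$ gives a nontrivial element, and using that $G_\a$ (hence $M_\a$, but $M_\a=1$) interacts with the $2$-arc $(\b_1, \a, \b_2)$ one derives a contradiction with semiregularity --- more precisely, $s \geq 2$ guarantees $G$ acts transitively on $2$-arcs, and the $2$-arc-transitive structure rules out two neighbours of $\a$ being $M$-equivalent when $M$ is semiregular with $\geq 3$ orbits. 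This yields $\val(\Ga_M) = \val(\Ga)$, so $\Ga$ is a normal cover of $\Ga_M$.

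For the transfer of $s$-arc-transitivity, I would lift and project $s$-arcs along the covering. Given the local bijection $\Ga(\a) \to \Ga_M(v)$, an $s$-arc of $\Ga_M$ based at $v$ lifts uniquely (once a lift of the initial vertex is chosen) to an $s$-arc of $\Ga$ based at any preimage $\a$, because at each step the covering identifies neighbourhoods bijectively and semiregularity of $M$ prevents backtracking-collisions (here the $v_{i-1} \neq v_{i+1}$ condition in the definition of an $s$-arc is preserved under the bijection). Conversely every $s$-arc of $\Ga_M$ arises this way. Since $G$ is transitive on $s$-arcs of $\Ga$ and $M \trianglelefteq G$ acts trivially on $\Ga_M$, the induced group $G/M$ is transitive on $s$-arcs of $\Ga_M$, so $\Ga_M$ is $(G/M,s)$-arc-transitive. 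Finally, the stabilizer identity $(G/M)_v \cong G_\a$ follows from $M_\a = 1$: the natural map $G_\a \to (G/M)_v$ is the composite $G_\a \hookrightarrow G \to G/M$ landing in $(G/M)_v$, and its kernel is $G_\a \cap M = M_\a = 1$, giving injectivity, while surjectivity comes from the fact that any coset $gM$ fixing $v = \a^M$ can be adjusted by an element of $M$ to fix $\a$ itself (using transitivity of $M$ on the orbit $\a^M$ and $M_\a = 1$).

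\emph{The main obstacle} I anticipate is the injectivity part of the local bijection $\Ga(\a) \to \Ga_M(v)$ (equivalently, that distinct neighbours of $\a$ lie in distinct $M$-orbits), since this is precisely where the hypotheses $s \geq 2$ and ``at least three orbits'' must be used in tandem --- semiregularity alone is not enough, and the argument must carefully invoke the $2$-transitivity of the local action to exclude a nontrivial $M$-element identifying two neighbours of a fixed vertex. Everything downstream (the normal-cover conclusion, the transfer of $s$-arc-transitivity, and the stabilizer isomorphism) then follows cleanly from the covering picture.
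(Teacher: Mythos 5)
First, a point of comparison: the paper does not prove this statement at all --- it is quoted from Praeger (\cite[Theorem 4.1]{Praeger92}) --- so your proposal has to be judged against the standard proof of that theorem. Your scaffolding agrees with it: semiregularity, then a local bijection $\Ga(\a)\to\Ga_M(v)$ giving the covering, then transfer of $s$-arc-transitivity and the stabilizer isomorphism. Your Step 1 idea (a nontrivial normal subgroup of a $2$-transitive group is transitive) and your Step 3 (projection/lifting of $s$-arcs; kernel computation $G_\a\cap M=M_\a=1$ plus adjusting a coset representative by an element of $M$) are sound in outline. However, there is a genuine gap at exactly the point you yourself flag as ``the main obstacle'': you never prove that distinct neighbours of $\a$ lie in distinct $M$-orbits, you only assert that ``the $2$-arc-transitive structure rules out'' the bad configuration. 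That injectivity is the crux of the whole theorem --- it is what makes $\Ga$ a normal cover of $\Ga_M$ and what allows $s$-arcs to be projected and lifted --- and no argument for it appears in your text.

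The missing step is a block-system argument, and it is short. Since $M\trianglelefteq G$, the stabilizer $G_\a$ permutes the $M$-orbits; hence the nonempty sets $\Ga(\a)\cap B$, with $B$ ranging over $M$-orbits, form a $G_\a^{\Ga(\a)}$-invariant partition of $\Ga(\a)$. As $G_\a^{\Ga(\a)}$ is $2$-transitive (this is where $s\ge2$ enters), it preserves no nontrivial partition, so either every part is a singleton --- the desired injectivity --- or $\Ga(\a)$ lies in a single $M$-orbit; in the latter case vertex-transitivity and connectivity give $V(\Ga)=\a^M\cup\b^M$ for $\b\in\Ga(\a)$, so $M$ has at most two orbits, contradicting the hypothesis. (Concretely: if $\b_1\ne\b_2\in\Ga(\a)$ with $\b_2\in\b_1^M$, then $G_{\a\b_1}$ preserves $\b_1^M\cap\Ga(\a)$ and is transitive on $\Ga(\a)\setminus\{\b_1\}$, forcing $\Ga(\a)\subseteq\b_1^M$.) A similar one-line argument using arc-transitivity alone excludes an edge with both ends in one $M$-orbit, i.e.\ loops in $\Ga_M$, which your appeal to semiregularity does not address. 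Note that once this dichotomy is in hand, semiregularity is an immediate corollary ($M_\a$ fixes each $M$-orbit setwise, hence fixes every neighbour of $\a$, hence is trivial by connectivity), which is the cleaner order in which Praeger's proof proceeds. Finally, a smaller inaccuracy in your Step 1: from ``$M_\a$ transitive on $\Ga(\a)$'' you conclude that $M$ is transitive on $V(\Ga)$; that propagation claim is false in general (take $M=\Sy_3\times\Sy_3$ preserving the bipartition of the $2$-arc-transitive graph $\K_{3,3}$), and the correct conclusion is that $M$ has at most two orbits --- the contradiction with ``at least three orbits'' survives, but the intermediate statement does not.
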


Let $\Ga$ be a graph and let $G\le\Aut(\Ga)$. For adjacent vertices $\a$ and $\b$ of $\Ga$,
let $G_{\a}^{[1]}$ be the kernel of $G_{\a}$ acting on $\Ga(\a)$,
and let $G_{\a\b}^{[1]}=G_{\a}^{[1]}\cap G_{\b}^{[1]}$.
The following theorem is a well-known result of Weiss~\cite{Weiss91} on
local action of 2-arc-transitive graphs.

\begin{theorem}\label{Weiss}
Let $\Ga$ be a connected $(G,2)$-transitive graph with $G\le\Aut(\Ga)$, and let $\a$ and $\b$ be adjacent vertices of $\Ga$.
Then one of the following holds:
\begin{enumerate}[{\rm (a)}]
\item $G_{\a\b}^{[1]}=1$, and $G_\a^{[1]}\cong(G_{\a}^{[1]})^{\Ga(\b)}\trianglelefteq G_{\a\b}^{\Ga(\b)}\cong G_{\a\b}^{\Ga(\a)}$;
\item $G_{\a\b}^{[1]}$ is a nontrivial $p$-group for some prime $p$, and there exist integers $d\ge2$ and $f\ge1$
such that $G_{\a}^{\Ga(\a)}\trianglerighteq\PSL_d(p^f)$ and $\val(\Ga)=(p^{df}-1)/(p^f-1)$.
\end{enumerate}
\end{theorem}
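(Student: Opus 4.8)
The final statement to prove is Theorem~\ref{Weiss}, Weiss's dichotomy on the local action of connected $(G,2)$-arc-transitive graphs. The plan is to organize the proof around the structure of the local permutation group $L := G_\a^{\Ga(\a)}$, which is $2$-transitive of degree $\val(\Ga)$ because $\Ga$ is $(G,2)$-arc-transitive, and around the behaviour of the edge-kernels $G_{\a\b}^{[1]}$ and $G_{\a\b}^{\Ga(\b)}$. First I would set up the two fundamental exact relations. Writing $\b \in \Ga(\a)$, the group $G_{\a\b}$ acts on both $\Ga(\a)\setminus\{\b\}$ and $\Ga(\b)\setminus\{\a\}$; by $2$-arc-transitivity $G_{\a\b}^{\Ga(\a)}$ is the point-stabilizer $L_\b$ in the $2$-transitive group $L$, so it is transitive on $\Ga(\a)\setminus\{\b\}$, and likewise for $G_{\a\b}^{\Ga(\b)}$. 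The quotient $G_{\a\b}/G_{\a\b}^{[1]}$ embeds diagonally into $G_{\a\b}^{\Ga(\a)} \times G_{\a\b}^{\Ga(\b)}$, and projecting along each factor gives the two key normal subgroups, whose mutual relationship is what the theorem describes.

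Second I would isolate the two cases by the single dichotomy $G_{\a\b}^{[1]} = 1$ versus $G_{\a\b}^{[1]} \neq 1$. In the first case, since $G_\a^{[1]}$ is the kernel of $G_\a$ on $\Ga(\a)$ and $G_{\a\b}^{[1]} = G_\a^{[1]} \cap G_\b^{[1]} = 1$, the group $G_\a^{[1]}$ injects into its action $(G_\a^{[1]})^{\Ga(\b)}$ on $\Ga(\b)$; moreover $G_\a^{[1]}$ fixes $\b$ and fixes $\Ga(\a)$ pointwise, so it lies in $G_{\a\b}$ and its image in $G_{\a\b}^{\Ga(\b)}$ is normal because $G_\a^{[1]}$ is normal in $G_\a \supseteq G_{\a\b}$. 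Transporting the isomorphism $G_{\a\b}^{\Ga(\a)} \cong G_{\a\b}^{\Ga(\b)}$ (both equal $L_\b$ up to the $2$-arc-transitive identification of the arc $(\a,\b)$ with $(\b,\a)$) then yields the displayed chain $G_\a^{[1]} \cong (G_\a^{[1]})^{\Ga(\b)} \trianglelefteq G_{\a\b}^{\Ga(\b)} \cong G_{\a\b}^{\Ga(\a)}$, which is conclusion~(a). The routine verifications here are the normality claims, which I would not grind through in detail.

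The substantive case is $G_{\a\b}^{[1]} \neq 1$, where one must force $G_\a^{\Ga(\a)}$ to contain $\PSL_d(p^f)$ and pin down the valency formula $(p^{df}-1)/(p^f-1)$. This is genuinely the original Weiss theorem and its proof rests on the classification of finite $2$-transitive groups together with a delicate pushing-up argument: one shows that $G_{\a\b}^{[1]}$ is a nontrivial $p$-group (for a single prime $p$ determined by the local action), that its normal closure forces $L = G_\a^{\Ga(\a)}$ to be of affine or projective type, and that among the $2$-transitive groups only those with socle $\PSL_d(p^f)$ acting on the points of projective space survive the constraint that the edge-kernel be a $p$-group acting compatibly from both ends of the edge. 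The plan is therefore to reduce, via the list of $2$-transitive permutation groups, to the case where $L$ has a nonabelian simple or affine socle, and to eliminate every family except $\PSL_d(p^f)$ by analyzing how $G_{\a\b}^{[1]}$ sits inside $G_\a$ and what its being a nontrivial $p$-group imposes on the Sylow structure of $L_\b$. The hardest part, by a wide margin, is this elimination step: showing that a nontrivial edge-kernel $G_{\a\b}^{[1]}$ cannot coexist with local actions whose socle is not $\PSL_d(p^f)$. Since this is exactly the content of Weiss's paper, in our setting I would simply cite \cite{Weiss91} for the deep case~(b) and give the short self-contained argument above for case~(a).
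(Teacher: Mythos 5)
Your proposal is correct and takes essentially the same route as the paper: the paper gives no proof of this theorem at all, stating it as a well-known result of Weiss and citing \cite{Weiss91}, which is exactly what you do for the substantive case~(b). Your added self-contained argument for case~(a) is sound (faithfulness of $G_\a^{[1]}$ on $\Ga(\b)$ from $G_{\a\b}^{[1]}=G_\a^{[1]}\cap G_\b^{[1]}=1$, normality of the image since $G_\a^{[1]}\trianglelefteq G_{\a\b}$, and the isomorphism $G_{\a\b}^{\Ga(\a)}\cong G_{\a\b}^{\Ga(\b)}$ via an element of $G$ reversing the arc $(\a,\b)$), but it is a bonus beyond what the paper itself supplies.
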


The following lemma is also well known. We include a proof here as it is not lengthy.

\begin{lemma}\label{Insoluble}
Let $\Ga$ be a connected graph, let $G$ be a vertex-transitive subgroup of $\Aut(\Ga)$, and let $\a\in V(\Ga)$.
Then every composition factor of $G_\a$ is isomorphic to a section of $G_\a^{\Ga(\a)}$.
\end{lemma}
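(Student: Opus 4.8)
The plan is to exploit the filtration of the vertex stabiliser by pointwise stabilisers of balls and to propagate the local action outwards along the connected graph. First I would record that, by definition of $G_\a^{[1]}$ as the kernel of $G_\a$ on $\Ga(\a)$, there is an exact sequence $1\to G_\a^{[1]}\to G_\a\to G_\a^{\Ga(\a)}\to1$. By the Jordan--H\"older theorem the composition factors of $G_\a$ are exactly those of $G_\a^{\Ga(\a)}$ together with those of $G_\a^{[1]}$. Since every composition factor of $G_\a^{\Ga(\a)}$ is trivially a section of $G_\a^{\Ga(\a)}$, the problem reduces to showing that every composition factor of the kernel $G_\a^{[1]}$ is a section of $G_\a^{\Ga(\a)}$.

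To analyse $G_\a^{[1]}$, for each $i\ge0$ let $G_\a^{[i]}$ denote the pointwise stabiliser in $G_\a$ of the set of vertices at distance at most $i$ from $\a$, and let $S_i$ be the set of vertices at distance exactly $i$ from $\a$; thus $G_\a^{[0]}=G_\a$, the group $G_\a^{[1]}$ agrees with the notation above, and $G_\a^{[i+1]}\trianglelefteq G_\a^{[i]}$ since $G_\a^{[i+1]}$ is the kernel of the action of $G_\a^{[i]}$ on the next sphere. Because $\Ga$ is finite and connected and $G\le\Aut(\Ga)$ acts faithfully on $V(\Ga)$, this subnormal chain terminates with $G_\a^{[d]}=1$ for $d$ the diameter. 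Hence the composition factors of $G_\a^{[1]}$ are precisely those of the successive quotients $G_\a^{[i]}/G_\a^{[i+1]}$ for $i\ge1$, and it suffices to bound these.

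The key step is an embedding. An element of $G_\a^{[i]}$ fixes every vertex of $S_{i-1}\cup S_i$ pointwise, while every vertex of $S_{i+1}$ is adjacent to some $\g\in S_i$; consequently the kernel of the restriction map $G_\a^{[i]}\to\prod_{\g\in S_i}\Sym(\Ga(\g))$ is exactly $G_\a^{[i+1]}$, yielding an embedding $G_\a^{[i]}/G_\a^{[i+1]}\hookrightarrow\prod_{\g\in S_i}(G_\a^{[i]})^{\Ga(\g)}$. Now for each $\g\in S_i$ we have $G_\a^{[i]}\le G_\g$, so $(G_\a^{[i]})^{\Ga(\g)}\le G_\g^{\Ga(\g)}$, and by vertex-transitivity $G_\g^{\Ga(\g)}\cong G_\a^{\Ga(\a)}$. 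Thus $G_\a^{[i]}/G_\a^{[i+1]}$ embeds into a direct product of groups each isomorphic to a subgroup of $G_\a^{\Ga(\a)}$. Each composition factor of $G_\a^{[i]}/G_\a^{[i+1]}$ is therefore a simple section of this direct product, and by the standard fact that a simple section of a direct product is a section of one of the direct factors, it is a section of $G_\a^{\Ga(\a)}$. Assembling these over all $i\ge1$ gives the claim for $G_\a^{[1]}$ and completes the proof.

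The bookkeeping about sections (a composition factor of a subgroup is a section, and a simple section of a direct product lands in a single factor) is routine and can be dispatched in a line or two. The one substantive point --- and the only place where the hypotheses genuinely enter --- is the embedding above: its verification rests on connectivity (so that $S_{i+1}$ is covered by the neighbourhoods $\Ga(\g)$ with $\g\in S_i$, and so that the filtration exhausts $G_\a$) together with vertex-transitivity (to identify each $G_\g^{\Ga(\g)}$ with $G_\a^{\Ga(\a)}$). I expect no real obstacle beyond stating these two facts cleanly.
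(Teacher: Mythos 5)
Your proposal is correct and follows essentially the same route as the paper: both filter $G_\a$ by the pointwise stabilisers of balls around $\a$, identify each successive quotient with an action on the next sphere, use connectivity to push that action into neighbourhoods of vertices on the previous sphere, and use vertex-transitivity to identify every local action with $G_\a^{\Ga(\a)}$. The only (cosmetic) difference is that the paper decomposes the sphere into orbits, each contained in a single neighbourhood, whereas you embed the quotient into a direct product of neighbourhood actions and invoke the fact that a simple section of a direct product is a section of a factor --- if anything your handling of that last reduction is stated more carefully than in the paper.
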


\begin{proof}
Let $d$ be the diameter of $\Ga$.
For $i\in\{0,1,\dots,d\}$, let $V_i$ be the set of vertices of distance $i$ from $\a$ in $\Ga$,
let $U_i=V_0\cup V_1\cup\dots\cup V_i$, and
let $K_i$ be the kernel of $G_\a$ acting on $U_i$.
By the connectivity of $\Ga$ we have $U_d=V(\Ga)$, and thus
\[
1=K_d\trianglelefteq K_{d-1}\trianglelefteq\dots\trianglelefteq K_1\trianglelefteq K_0=G_\a.
\]
Moreover, for $i\in\{1,\dots,d\}$,
\[
K_{i-1}/K_i\cong K_{i-1}^{V_i}.
\]
Let $\Delta$ be an orbit of $K_{i-1}$ on $V_i$, and let $\gamma\in\Delta$.
Then there exists $\beta\in U_{i-1}$ such that $\gamma\in\Ga(\beta)$.
It follows that $\gamma^k\in\Ga(\beta^k)=\Ga(\beta)$ for each $k\in K_{i-1}$.
Hence $\Delta=\gamma^{K_{i-1}}\subseteq\Ga(\beta)$, and so $K_{i-1}^\Delta$ is a homomorphic image of $K_{i-1}^{\Ga(\beta)}$.
Note that $K_{i-1}^{\Ga(\beta)}\le G_\beta^{\Ga(\beta)}\cong G_\a^{\Ga(\a)}$ since $G$ is vertex-transitive.
We then conclude that $K_{i-1}^\Delta$ is (isomorphic to) a section of $G_\a^{\Ga(\a)}$.
Since $\Delta$ is an arbitrary orbit of $K_{i-1}$ on $V_i$,
this implies that $K_{i-1}/K_i\cong K_{i-1}^{V_i}$ is a section of $G_\a^{\Ga(\a)}$.
Thus every composition factor of $G_\a$ is a section of $G_\a^{\Ga(\a)}$.
\end{proof}

Recall that the $2$-transitive groups are known (see for instance~\cite[Chapter~7]{Cameron}).
In particular, a $2$-transitive group is either affine or almost simple.

\begin{lemma}\label{(L,2)-Trans}
Let $\Ga$ be a connected $(G,2)$-arc-transitive graph, let $N$ be a vertex-transitive normal subgroup of $G$, and let $\a\in V(\Ga)$.
Then the following hold:
\begin{enumerate}[{\rm (a)}]
\item if $N_{\a}^{\Ga(\a)}$ is almost simple with socle not isomorphic to $\PSL_2(8)$, then $\Ga$ is $(N,2)$-arc-transitive;
\item if $N_{\a}$ is almost simple with socle not isomorphic to $\PSL_2(8)$, then $\Ga$ is $(N,2)$-arc-transitive.
\end{enumerate}
\end{lemma}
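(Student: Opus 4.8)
The plan is to reduce both parts to statements about $2$-transitive permutation groups. Since $N$ is vertex-transitive, $\Ga$ is $(N,2)$-arc-transitive precisely when $N_\a^{\Ga(\a)}$ is $2$-transitive on $\Ga(\a)$, so in each part it will suffice to establish this $2$-transitivity. I would begin from the observation that $N\trianglelefteq G$ gives $N_\a=N\cap G_\a\trianglelefteq G_\a$, and hence, applying the restriction homomorphism $G_\a\to\Sym(\Ga(\a))$, that $N_\a^{\Ga(\a)}\trianglelefteq G_\a^{\Ga(\a)}$; here $G_\a^{\Ga(\a)}$ is $2$-transitive because $\Ga$ is $(G,2)$-arc-transitive. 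The two external inputs I would invoke are the classification of $2$-transitive groups (each is affine or almost simple) and the fact that the socle of an almost simple $2$-transitive group is again $2$-transitive, with the single exception of $\Ree(3)=\PGaL_2(8)$ on $28$ points, whose socle $\PSL_2(8)$ is transitive but not $2$-transitive. Avoiding this exception is exactly the purpose of the hypothesis on the socle.

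For part (a) I would argue directly. Put $T_0=\Soc(N_\a^{\Ga(\a)})$, which is nonabelian simple since $N_\a^{\Ga(\a)}$ is almost simple. As $T_0$ is characteristic in $N_\a^{\Ga(\a)}$ and $N_\a^{\Ga(\a)}\trianglelefteq G_\a^{\Ga(\a)}$, it follows that $T_0\trianglelefteq G_\a^{\Ga(\a)}$. Thus the $2$-transitive group $G_\a^{\Ga(\a)}$ has a nonabelian simple normal subgroup, so it is not affine and must be almost simple with socle $T_0$. Because $T_0\not\cong\PSL_2(8)$, this socle is $2$-transitive; since $T_0\le N_\a^{\Ga(\a)}$, the group $N_\a^{\Ga(\a)}$ is $2$-transitive, and $\Ga$ is $(N,2)$-arc-transitive.

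For part (b) I would reduce to part (a). Write $T'=\Soc(N_\a)$ and let $K$ be the kernel of $N_\a$ acting on $\Ga(\a)$, so $N_\a^{\Ga(\a)}\cong N_\a/K$. By Lemma~\ref{Insoluble} the composition factor $T'$ of $N_\a$ is isomorphic to a section of $N_\a^{\Ga(\a)}$; as $T'$ is insoluble, so is $N_\a^{\Ga(\a)}$, and in particular $N_\a^{\Ga(\a)}\ne1$. Being a nontrivial normal subgroup of the primitive group $G_\a^{\Ga(\a)}$, $N_\a^{\Ga(\a)}$ is transitive. Next I would consider the image $\overline{T'}$ of $T'$ in $N_\a^{\Ga(\a)}$: it is a normal subgroup that is a quotient of the simple group $T'$, so either $\overline{T'}\cong T'$ or $\overline{T'}=1$; the latter would present $N_\a^{\Ga(\a)}$ as a quotient of the soluble group $N_\a/T'$, against insolubility, so $\overline{T'}\cong T'$ is a nonabelian simple normal subgroup of $N_\a^{\Ga(\a)}$. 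I would then exclude the affine possibility for $G_\a^{\Ga(\a)}$: were it affine with elementary abelian regular socle $V$, then $V\le N_\a^{\Ga(\a)}$ and the preimage of $V$ in $N_\a$ would be a nontrivial normal subgroup of the almost simple group $N_\a$, hence would contain $T'$, forcing $\overline{T'}\le V$ to be abelian and contradicting $\overline{T'}\cong T'$. Therefore $G_\a^{\Ga(\a)}$ is almost simple with socle $T_0$, and $N_\a^{\Ga(\a)}\supseteq T_0$ is almost simple with $\Soc(N_\a^{\Ga(\a)})=T_0$; since $\overline{T'}$ is a nonabelian simple normal subgroup it must equal $T_0$, whence $T_0\cong T'\not\cong\PSL_2(8)$ and part (a) applies.

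I expect the main obstacle to be the careful treatment of the $\PSL_2(8)$ exception, which is the only point where the hypotheses genuinely bite: in the $28$-point action of $\PGaL_2(8)$ the socle fails to be $2$-transitive, so without excluding it the conclusion would be false. A related subtlety is that ``$T'$ is a section of $N_\a^{\Ga(\a)}$'' does not by itself force $T'\cong\Soc(N_\a^{\Ga(\a)})$ (for instance $\A_5$ is a section of $\A_6$); the correct identification $T'\cong T_0$ comes instead from recognizing the image $\overline{T'}$ as a nonabelian simple normal subgroup of the almost simple group $N_\a^{\Ga(\a)}$, which pins it to the socle. Finally, I would need to confirm the quoted exception list for socles of $2$-transitive groups, which is a finite verification against the known classification.
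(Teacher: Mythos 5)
Your proposal is correct and follows essentially the same route as the paper: part (a) via $N_\a^{\Ga(\a)}\trianglelefteq G_\a^{\Ga(\a)}$ and the classification of $2$-transitive groups (with $\PGaL_2(8)$ on $28$ points as the sole exceptional socle), and part (b) by using Lemma~\ref{Insoluble} to force $N_\a^{\Ga(\a)}$ to be nonsolvable and then reducing to part (a). The only difference is cosmetic: the paper shortens your part (b) by noting that a nonsolvable homomorphic image of the almost simple group $N_\a$ must be isomorphic to $N_\a$ itself, whereas you track the image of $\Soc(N_\a)$ explicitly; both arguments are valid.
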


\begin{proof}
Since $\Ga$ is $(G,2)$-arc-transitive, $G_\a^{\Ga(\a)}$ is $2$-transitive.
Suppose that $N_\a^{\Ga(\a)}$ is almost simple with socle not isomorphic to $\PSL_2(8)$.
It follows that $N_\a^{\Ga(\a)}$ is a normal subgroup of the $2$-transitive group $G_\a^{\Ga(\a)}$.
Then by the classification of $2$-transitive groups, $G_\a^{\Ga(\a)}$ is almost simple with $\Soc(G_\a^{\Ga(\a)})=\Soc(N_\a^{\Ga(\a)})$, and $N_\a^{\Ga(\a)}$ is $2$-transitive.
Since $N$ is vertex-transitive, we then conclude that $\Ga$ is $(N,2)$-arc-transitive.
This proves part~(a).

Now suppose that $N_{\a}$ is almost simple with socle not isomorphic to $\PSL_2(8)$.
Then $N_\a^{\Ga(\a)}$ is nonsolvable by Lemma~\ref{Insoluble}.
Since $N_\a^{\Ga(\a)}$ is a homomorphic image of the almost simple group $N_\a$, this implies that $N_\a^{\Ga(\a)}\cong N_\a$.
Thus the conclusion of part~(a) shows that $\Ga$ is $(N,2)$-arc-transitive, proving part~(b).
\end{proof}

\subsection{Coset graphs and orbital graphs}\label{subsec1}

Coset graph and orbital graph are two useful tools to construct and understand general arc-transitive graphs.

Let $G$ be a group, $K$ a core-free subgroup of $G$ (namely, $K$ contains no nontrivial normal subgroup of $G$) and $g\in G\setminus K$. Define the \emph{coset graph} $\Cos(G,K,g)$ to be a graph with vertex set $[G:K]$ (the set of right cosets of $K$ in $G$) such that $Kx$ is adjacent to $Ky$ with $x,y\in G$ if and only if $yx^{-1}\in KgK$. The following assertion is due to Sabidussi \cite{Sab64}.


\begin{lemma}\label{Sab64}
Let $\Ga$ be a connected $G$-arc-transitive graph of valency $d$, and let $\a\in V(\Ga)$.
Then $\Ga\cong\Cos(G,K,g)$ for $K=G_\a$ and some $2$-element $g$ satisfying:
\begin{equation}\label{Eqn4}
g\in\Nor_G(K\cap K^g),\ \ g^2\in K,\ \ \l K,g\r=G,\ \ |K|/|K\cap K^g|=d.
\end{equation}
In particular, if $\b\in\Ga(\a)$ then $\l G_\a,\Nor_G(G_{\a\b})\r=G$.

Conversely, if $H$ is a core-free subgroup of a group $X$ and $x$ is an element of $X$
such that the triple $(X,H,x)$ $($as $(G,K,g)$ there$)$ satisfies~\eqref{Eqn4},
then $\Cos(X,H,x)$ is a connected $X$-arc-transitive graph of valency $d$.
\end{lemma}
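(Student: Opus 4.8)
The statement is Sabidussi's classical coset-graph correspondence, and the plan is to prove the two implications by setting up the standard dictionary between the vertices of $\Ga$ and the right cosets of $K=G_\a$, taking care to match the coset conventions fixed by the definition of $\Cos(G,K,g)$ (right cosets, $G$ acting by right multiplication, adjacency governed by the double coset $KgK$).

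For the direct part, I would first note that $G$-arc-transitivity forces vertex-transitivity, so the orbit--stabiliser map $\a^x\mapsto Kx$ is a bijection $V(\Ga)\to[G:K]$ intertwining the action of $G$ on vertices with right multiplication on cosets, since $(\a^x)^h=\a^{xh}\mapsto (Kx)h$. Fixing a neighbour $\b\in\Ga(\a)$, arc-transitivity yields an element interchanging $\a$ and $\b$; replacing it by its odd part turns it into a $2$-element $g$ still swapping $\a$ and $\b$ (an odd power of a transposition on $\{\a,\b\}$ is again that transposition). I would then read off the four conditions of \eqref{Eqn4}: $g^2\in G_\a=K$ because $g^2$ fixes $\a$; $K^g=G_\b$, hence $K\cap K^g=G_{\a\b}$, and local transitivity of $G_\a$ on $\Ga(\a)$ gives $|K:K\cap K^g|=|\Ga(\a)|=d$; and $g\in\Nor_G(G_{\a\b})$ because $g$ swaps $\a,\b$ and so fixes $G_\a\cap G_\b$ under conjugation. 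The generation $\l K,g\r=G$ is the one step that genuinely uses connectivity: the orbit $\a^{\l K,g\r}$ contains $\Ga(\a)=\a^{gG_\a}$ and is closed under passing to neighbours, so it exhausts $V(\Ga)$, forcing $\l K,g\r$ to be vertex-transitive and hence equal to $G$. Finally I would check that the bijection is a graph isomorphism: $\a^x\sim\a^y$ iff $\a^{yx^{-1}}\in\Ga(\a)=\{\a^{gk}:k\in K\}$ iff $yx^{-1}\in\bigcup_{k\in K}Kgk=KgK$, which is precisely adjacency in $\Cos(G,K,g)$. The ``in particular'' clause is then immediate from $g\in\Nor_G(G_{\a\b})$ and $\l K,g\r=G$.

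For the converse, given $(X,H,x)$ satisfying \eqref{Eqn4} I would form $\Cos(X,H,x)$ on $[X:H]$ with $X$ acting by right multiplication; core-freeness of $H$ makes this faithful and transitive on vertices. The relation is symmetric because $x^2\in H$ gives $x^{-1}\in Hx\subseteq HxH$, so $HxH=Hx^{-1}H=(HxH)^{-1}$, and it is loopless since $x\notin H$ (otherwise $\l H,x\r=H\ne X$ in the nondegenerate case). The neighbours of the base vertex $H$ are exactly $\{Hxh:h\in H\}=\{Hy:y\in HxH\}$, whose number is $|HxH|/|H|=|H:H\cap H^x|=d$, and the stabiliser $H$ permutes them transitively; together with vertex-transitivity this gives $X$-arc-transitivity. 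Connectivity follows from $\l H,x\r=X$, since the component of $H$ consists of precisely the cosets $Hy$ with $y\in\l H,x\r$.

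I expect the main obstacles to be bookkeeping rather than conceptual: choosing $g$ to be a $2$-element while preserving the interchange of $\a$ and $\b$ (the odd-power trick), and writing the connectivity/closed-under-neighbours induction for $\l K,g\r=G$ cleanly. Keeping the left/right coset and action conventions consistent throughout --- so that the double coset $KgK$ really matches the neighbour set of $\a$ --- is the other point that needs care.
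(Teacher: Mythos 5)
The paper does not prove this lemma at all --- it is stated as a known result and attributed to Sabidussi \cite{Sab64} --- so there is no in-paper argument to compare against. Your proof is the standard, correct derivation of the coset-graph correspondence, and it handles the two genuinely delicate points properly: the odd-power trick to replace an arc-reversing element by a $2$-element still swapping $\a$ and $\b$, and the identification of the connected component of the coset $H$ with $\{Hw: w\in\l H,x\r\}$ via powers of the double coset $HxH$ (using $x^{-1}\in HxH$, which follows from $x^2\in H$).
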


Let $G$ be a transitive permutation group on a set $\Ome$. Then $G$ naturally acts on $\Ome \times \Ome$ by
\[ 
(\a,\b)^g=(\a^g,\b^g)\ \ \text{for}\ g \in G\ \text{and} \   \a,\b \in \Ome.
\]
The orbits of $G$ on $\Ome \times \Ome$ are called orbitals of $G$. 
By the transitivity of $G$, each orbital of $G$ corresponds to a orbit of $G_{\a}$ on $\Ome$, which is called a \emph{suborbit} of $G$. 
Let  $\Delta$ be a suborbit of $G$, and let $\beta \in \Delta$. 
We say that $\Delta$ is \emph{nontrivial} if $\Delta \neq \{ \a\}$, and $\Delta$ is \emph{self-paired} if $(\a,\b)^G=(\b,\a)^G$. If $\Delta$ is nontrivial, then the associated \emph{orbital graph} $\Ga(\Delta)$ of $\Delta$ is the graph with vertex set $\Ome$ and edge set $\{\a,\b\}^G:=\{ \{ \a^g,\b^g\}: g \in G\}$.
Clearly, if $\Delta$ is nontrivial and self-paired, then $\Ga(\Delta)$ is $G$-arc-transitive.
Conversely, every $G$-arc-transitive graph can arise in this way.
Note that if $\Ga(\Delta)$ is connected, then  $\langle G_\a,g\rangle=G$ for any $g \in G$ such that $\alpha^g=\beta$. 

The next lemma will be needed in Section~\ref{sec1}.

\begin{lemma}\label{lm-AnAn-2Sn-2}
Let $G=\A_n$ be a transitive permutation group on a set $\Ome$, where $n\ge 5$, and let $\a\in \Ome$. 
Suppose that $G_\a\cong\A_{n-2}$ or $\Sy_{n-2}$. 
Then there is no connected $(G,2)$-arc-transitive associated orbital graph for any non-trivial suborbit of $G$ on $\Omega$.
\end{lemma}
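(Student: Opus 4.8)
The plan is to make the permutation representation explicit, reduce $(G,2)$-arc-transitivity to a property of the local action $G_\a^{\Ga(\a)}$, and then match the possible valencies against the known list of $2$-transitive groups.

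First I would identify the action. Since $[\A_n\colon G_\a]=n(n-1)$ when $G_\a\cong\A_{n-2}$ and $[\A_n\colon G_\a]=\binom{n}{2}$ when $G_\a\cong\Sy_{n-2}$, standard results on low-index subgroups of $\A_n$ (with the finitely many small $n$ checked directly) show that $G_\a$ is the stabilizer of an ordered (resp.\ unordered) pair of points. Thus I may take $\Ome$ to be the set of ordered $2$-tuples (resp.\ $2$-subsets) of $\{1,\dots,n\}$ with $G=\A_n$ acting naturally, and put $\a=(1,2)$ (resp.\ $\a=\{1,2\}$). The nontrivial suborbits are then explicit: in the ordered case they have sizes $1$, $n-2$ (four of these, according to which coordinate is shared with $\a$), and $(n-2)(n-3)$; in the unordered case they have sizes $2(n-2)$ (pairs meeting $\{1,2\}$) and $\binom{n-2}{2}$ (pairs disjoint from $\{1,2\}$).

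The key reduction is that if the orbital graph $\Ga=\Ga(\Delta)$ of a nontrivial suborbit $\Delta$ is $(G,2)$-arc-transitive, then $G_\a^{\Ga(\a)}$ is a $2$-transitive group of degree $|\Delta|$. As $G_\a^{\Ga(\a)}=G_\a/G_\a^{[1]}$ is a $2$-transitive quotient of $\A_{n-2}$ or $\Sy_{n-2}$ (nonsolvable by Lemma~\ref{Insoluble} once $n-2\ge 5$, and the remaining degree-$2$ quotient is excluded since no suborbit has size $2$), the classification of $2$-transitive groups forces this quotient, apart from finitely many small exceptions, to be the natural action of degree $n-2$. Comparing with the suborbit sizes, the only candidates of size $n-2$ are the four shared-coordinate suborbits in the ordered case; but there every neighbour of a vertex shares a fixed coordinate, so the orbital graph is a disjoint union of cliques, $\langle G_\a,g\rangle$ fixes a point of $\{1,\dots,n\}$, and the graph is disconnected, contradicting the connectivity criterion. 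The remaining large suborbits give imprimitive local actions, since the coordinate classes constitute a nontrivial block system, so $G_\a^{\Ga(\a)}$ is not even primitive. This settles all $n\ge 6$.

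The main obstacle is the small-degree regime, where $\A_{n-2}$ and $\Sy_{n-2}$ admit exceptional $2$-transitive actions whose degrees may coincide with a suborbit size. The tightest instance is $n=5$ with $G_\a\cong\Sy_3$: the disjointness suborbit has size $\binom{3}{2}=3$, the induced local action is $\Sy_3$ acting sharply $2$-transitively on $3$ points, and the associated orbital graph is the connected Petersen graph, so both the local $2$-transitivity and the connectivity requirements are met. I therefore expect the crux of the proof to be an individual (and essentially computational) analysis of the configurations with $n\in\{5,6\}$, centred on this Petersen configuration, which is the genuine borderline case that the general degree-matching argument does not reach.
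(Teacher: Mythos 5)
Your strategy for $n\ge 7$ is sound and ends up where the paper does, though by a heavier route: the paper never touches the classification of $2$-transitive groups in this lemma, but simply observes that for the two ``large'' suborbits the stabilizer $G_{\a\b}$ is non-maximal in $G_\a$ (so the local action $G_\a$ on $[G_\a:G_{\a\b}]$ is imprimitive, hence not $2$-transitive), and eliminates the shared-coordinate suborbits in the ordered case by noting that $\l G_\a,g\r$ fixes a point of $\{1,\dots,n\}$ --- the same disconnectedness phenomenon you describe. (One small slip in your version: for the disjointness suborbit in the unordered case the local action of $\Sy_{n-2}$ on the $\binom{n-2}{2}$ pairs is actually \emph{primitive} for $n-2\ge5$, just not $2$-transitive; your degree-matching argument, not the claimed block system, is what excludes it.)

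The real issue is the case you flag at the end and defer to ``an individual and essentially computational analysis''. No such analysis can succeed, because the Petersen configuration you exhibit is not a borderline case to be disposed of --- it is a genuine counterexample to the lemma as stated. For $n=5$ and $G_\a\cong\Sy_3$ acting on the ten $2$-subsets of $\{1,\dots,5\}$, the disjointness suborbit is nontrivial and self-paired, its orbital graph is the connected Petersen graph, and $G_\a^{\Ga(\a)}\cong\Sy_3$ acts $2$-transitively (indeed sharply so) on the three neighbours; since $\A_5$ is vertex-transitive this makes the Petersen graph $(\A_5,2)$-arc-transitive, as one also sees by counting: it has exactly $10\cdot3\cdot2=60=|\A_5|$ $2$-arcs and the stabilizer in $\A_5$ of a $2$-arc is trivial. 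So the statement is false for $n=5$ and should be asserted only for $n\ge6$. It is worth noting that the paper's own proof breaks at precisely this point: it claims that $G_{\a\b}\cong\Sy_{n-4}.\ZZ_2$ is never maximal in $G_\a\cong\Sy_{n-2}$, which fails for $n=5$ since $\ZZ_2$ is maximal in $\Sy_3$. The damage is contained because the lemma is only ever invoked with $n\in\{6,10,15\}$ (Rows 1--3 of Table~\ref{Sporadic} in Lemma~\ref{Small-gps}), where both your argument and the paper's go through; but neither your proposal nor the paper proves the statement in the full range $n\ge5$ claimed.
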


\begin{proof} 
Suppose for a contradiction that there exists a connected $(G,2)$-arc-transitive associated orbital graph $\Ga$ with a suborbit $\Delta$ of $G$ on $\Ome$. 
Then $\Delta$ is nontrivial and self-paired. 

First assume that $G_\a\cong\A_{n-2}$. 
Then the action of $G$ on $\Ome$ can be identified with
the natural action of $G$ on $\Phi^{(2)}$,
the set of ordered pairs in the set $\Phi:=\{1,2,\dots,n\}$.
Identify $\Ome$ with $\Phi^{(2)}$, and assume without loss of generality that $\a=(1,2)$. 
Let $\b=(i,j)\in\Delta$.
Then $\Delta=\b^{G_\a}$, and $\b\neq\a$ as $\Delta$ is nontrivial.
If $\{i,j\}=\{1,2\}$, then $\b=(2,1)$, and so the suborbit $\Delta=\b^{G_\a}=\{\beta\}$ has length $1$, contradicting the connectivity of $\Ga$. 
If $\{i,j\}\cap=\{1,2\}=\emptyset$, then $G_{\a\b}\cong\A_{n-4}$ is not maximal in $\G_\a$, and so the action of $G_\a$ on $[G_\a:G_{\a\b}]$ is not $2$-transitive, contradicting the $(G,2)$-arc-transitivity of $\Ga$.
Hence $|\{i,j\}\cap\{1,2\}|=1$.
Assume without loss of generality that $\beta=(1,3)$.
Let $g=(2,3)(4,5)$. Then $g$ maps $\a$ to $\b$, and so $\langle G_\a,g \rangle=G$ by the connectivity of $\Ga$.
However, $\langle G_\a,g \rangle$ fixes $1$ as both $G_\a$ and $g$ fixes $1$, a contradiction. 

Next assume that $G_\a\cong\Sy_{n-2}$. Then the action of $G$ on $\Ome$ can be identified with
the natural action of $G$ on $\Phi^{\{2\}}$,
the set of $2$-subsets of the set $\Phi:=\{1,2,\dots,n\}$.
Identify $\Ome$ with $\Phi^{\{2\}}$, and assume without loss of generality that $\a=\{1,2\}$. 
Let $\b=\{i,j\}\in\Delta$.
Then $\Delta=\b^{G_\a}$, and $\b\neq\a$ as $\Delta$ is nontrivial.
If $\{1,2\}\cap\{i,j\}=\emptyset$,
then $G_{\a\b}\cong\Sy_{n-4}.\ZZ_2$.
If $|\{i,2\}\cap\{i,j\}|=1$,
then $G_{\a\b}\cong\A_{n-3}$.
In either case, $G_{\a\b}$ is not maximal in $G_\a$, and so the action of $G_\a$ on $[G_\a:G_{\a\b}]$ is not $2$-transitive, contradicting the $(G,2)$-arc-transitivity of $\Ga$.
\end{proof}

\subsection{Quasiprimitive permutation groups with a transitive alternating group}

A transitive permutation group is said to be \emph{quasiprimitive} if all of its nontrivial normal subgroups are transitive. 
The following classification of quasiprimitive permutation groups containing a transitive alternating group was obtained by the second-named author \cite{Xia17}.

\begin{proposition}\label{Xia}
Let $G$ be a quasiprimitive permutation group on $\Omega$, and let $\a\in\Ome$.
If $G$ contains a transitive subgroup $H=\A_n$ with $n\ge5$, then one of the following holds.
\begin{enumerate}[{\rm (a)}]
\item $G$ is almost simple with socle $L$ such that either $L=H$, or $L=HL_\a$ satisfies one of the following:
\begin{enumerate}
\item[{\rm (a.1)}] $L=\A_{n+k}$ with $1\le k \le5$, and $L_\a$ is $k$-transitive on $n+k$ points;
\item[{\rm (a.2)}] $L=\A_{m}$ and $L_\a=\A_{m-1}$, where $m$ is the index of a subgroup of $A_n$;
\item[{\rm (a.3)}] $(L, n, L_\a)$ lies in Table~$\ref{Sporadic}$.
\end{enumerate}
\item $G$ is primitive with socle $\A_n\times\A_n$, and $H$ is regular.
\item $n=6$, $G$ is primitive with socle $\A_6\times\A_6$, and $G\le\Sy_6\wr\Sy_2$ by the product action on $6^2$ points.
\end{enumerate}
\end{proposition}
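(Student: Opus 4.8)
The plan is to apply the quasiprimitive form of the O'Nan--Scott theorem (Praeger~\cite{Praeger92}), which sorts the quasiprimitive groups into the usual types---affine, holomorph of a simple or of a compound group, almost simple, twisted wreath, simple or compound diagonal, and product action---and then to use the transitive subgroup $H\cong\A_n$ to discard or identify each type. The main organising tool is the simplicity of $H$: writing $N=\Soc(G)=T^k$ with $T$ simple, the subgroup $H\cap N$ is normal in $H$ and so equals $1$ or $H$. If $H\cap N=1$ then $H\cong HN/N$ embeds in $G/N\le\Out(T)\wr\Sym_k$; since $\Out(T)$ is solvable by Schreier's conjecture, the intersection of $\A_n$ with the base group $\Out(T)^k$ is a solvable normal subgroup of $\A_n$ and hence trivial, so $\A_n$ injects into $\Sym_k$ and $k\ge n$. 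If instead $H\le N$, then every nontrivial coordinate projection of $H$ is injective, so $T$ contains a copy of $\A_n$ and $|T|\ge n!/2$. Throughout I will also use that transitivity of $H$ forces $|\Omega|$ to divide $|H|=n!/2$.

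These observations immediately remove the ``inflated'' types. For the affine type, $N$ is an elementary abelian $p$-group of order $p^d=|\Omega|$ acting regularly, so $H\cap N=1$ and $\A_n$ embeds faithfully in $G/N\le\GL_d(p)$, forcing $d\ge n-2$; but $H_\a$ then has prime-power index $p^d$ in $H$, and Guralnick's classification of prime-power-index subgroups of $\A_n$ gives $H_\a=\A_{n-1}$ with $p^d=n$, whence $d=\log_p n\ge n-2$, which fails for all $n\ge5$ (the coincidences $n\in\{5,6,8\}$ being checked by hand). For twisted wreath type, $N$ is regular nonabelian, so $H\le N$ would force $H=N$ and $k=1$, while $H\cap N=1$ gives $k\ge n$ and a degree $|T|^{k}$ far exceeding $|H|$. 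For the remaining high-rank types---compound diagonal, holomorph of a compound group, and simple diagonal with $k\ge3$ factors---the degree is at least $|T|^{2}$, which exceeds $|H|$ once $|T|\ge n!/2$ (the case $H\le N$), while $H\cap N=1$ again yields $k\ge n$ and an impossibly large degree. Hence only the almost simple, the holomorph-simple (simple diagonal with two factors), and the product-action types survive.

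The holomorph-simple and product-action types give exactly cases~(b) and~(c). In the holomorph-simple type $N=T^2$ has degree $|T|$ and diagonal stabiliser $\cong T$; here $H\cap N=1$ is impossible (it would force $\A_n\hookrightarrow\Sym_2$), so $H\le T^2$ and $T$ contains $\A_n$, giving $|T|\ge n!/2\ge|\Omega|=|T|$, whence $T\cong\A_n$, the degree equals $|H|$, and $H$ is one of the two regular factors---this is case~(b). In the product-action type $N=T^m$ with $G\le W\wr\Sym_m$, $W$ almost simple on a set $\Delta$ and $|\Omega|=|\Delta|^m$: projecting $H$ to $\Sym_m$, either $H$ lies in the base group $W^m$, in which case transitivity on $\Delta^m$ makes each coordinate projection a transitive $\A_n$ on $\Delta$ and bounds $|\Delta|^m\le n!/2$, leaving only $n=6$, $m=2$, $|\Delta|=6$ (case~(c)), or $H$ projects onto $\A_n\le\Sym_m$ with $m\ge n$, which is incompatible with $|\Delta|^m\le n!/2$.

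The almost simple type is the main case and the principal obstacle. Here $N=T$ is simple and $H\cong\A_n$ is transitive; since $\Out(T)$ is solvable the case $H\cap T=1$ cannot occur, so $H\le T$ and $T$ admits the factorisation $T=H\,T_\a=\A_n\cdot T_\a$ with the factor $\A_n$ transitive. Classifying the simple groups $T$ with such a factorisation is the heart of the argument: I would embed $\A_n$ and $T_\a$ in maximal factors and appeal to the Liebeck--Praeger--Saxl classification of the maximal factorisations of the almost simple groups, read off from their tables the factorisations possessing a factor that contains $\A_n$, and then for each such candidate decide which $T_\a$ actually yields a \emph{transitive} copy of $\A_n$. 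This produces the alternating socles $T=\A_{n+k}$ with $1\le k\le5$ and $T_\a$ $k$-transitive (case~(a.1)), the socles $T=\A_m$ with $T_\a=\A_{m-1}$ for $m$ an index of a subgroup of $\A_n$ (case~(a.2)), the possibility $T=\A_n$, and a short list of exceptional Lie-type and sporadic examples collected in Table~\ref{Sporadic} (case~(a.3)). The hard part is precisely this factorisation bookkeeping over the many Lie-type families, together with the verification in each case that an abstract $\A_n$-subgroup is genuinely transitive; by contrast the non-almost-simple types all succumb to the uniform order and Schreier arguments above.
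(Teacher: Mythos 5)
First, a point of calibration: the paper does not prove this proposition at all --- it is imported verbatim from \cite{Xia17}, so the only meaningful comparison is with that reference, and your high-level plan (Praeger's quasiprimitive O'Nan--Scott theorem, the simplicity of $H$ to sift types, and Liebeck--Praeger--Saxl factorizations in the almost simple case) is indeed the natural route and the one underlying \cite{Xia17}. The genuine gap is in your elimination of the twisted wreath, holomorph-compound, compound diagonal, simple diagonal ($k\ge3$) and product action types: it rests on order counting that fails for large $n$. In each of these types with $H\cap N=1$ you correctly obtain $\A_n\hookrightarrow\Sy_k$, hence $k\ge n$, and a degree bounded below by roughly $c^n$ with $c\le60$; you then declare this ``impossibly large'' against $|\Ome|\le|H|=n!/2$. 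But the factorial eventually wins: $60^n<n!/2$ for all $n\ge164$, $(\sqrt{60})^n<n!/2$ already for $n\ge22$, and $5^n<n!/2$ for $n\ge13$. So, for instance, a twisted wreath configuration with $T=\A_5$ and $k=n=200$ has degree $60^{200}\le 200!/2$ and passes every inequality you write down, as does your second product-action subcase once $n\ge13$. Ruling these types out genuinely requires structural arguments --- analysing the factorization $G=HG_\a$ inside each type, or invoking the Baddeley--Praeger--Schneider classification of transitive simple subgroups of wreath products in product action --- and that is precisely the content your counting shortcut tries to bypass.

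Second, even where the counting is numerically harmless, your product-action deduction does not prove what you claim. With $H$ in the base group and every coordinate projection a transitive copy of $\A_n$ on $\Delta$, the constraint that $|\Delta|^m$ divides $n!/2$ does \emph{not} leave only $(n,m,|\Delta|)=(6,2,6)$: for example $(n,m,|\Delta|)=(8,2,8)$ passes it, since $64$ divides $8!/2=20160$. Moreover, transitivity of each projection is far weaker than transitivity of $H$ on $\Delta^m$ (the straight diagonal $\A_5\le\A_5\times\A_5$ on $5^2$ points already shows this). What actually isolates case (c) is that a diagonally embedded simple group is transitive on $\Delta^2$ if and only if $\A_n=K_1K_2^{g}$ for \emph{every} $g\in\A_n$, where $K_1,K_2$ are the coordinate point stabilizers; classifying such configurations is what yields $\A_6$ with its two conjugacy classes of $\A_5$'s and nothing else. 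Finally, the almost simple case --- which is where nearly all of the substance of the proposition lives, namely (a.1)--(a.3) and Table~\ref{Sporadic} --- is left as a plan in your write-up; that factorization bookkeeping is the bulk of \cite{Xia17} and cannot be waved through.
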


\begin{table}[ht]
\[\begin{array}{llll} \hline
\text{Row} & L &  n  &  L_\a  \\ \hline
1& \A_6 &  5  &  \A_4,\,\Sy_4  \\
2& \A_{10} &  6  &  \A_8,\,\Sy_8  \\
3& \A_{15} &  7,\,8  &  \A_{13},\,\Sy_{13}  \\
4& \M_{12} &  5  &  \M_{11}  \\
5& \PSL_2(11)&  5  &  11,\,11{:}5  \\
6& \PSL_2(19)&  5  &  19{:}9  \\
7& \PSL_2(29)&  5  &  29{:}7,\,29{:}14  \\
8& \PSL_2(59)&  5  &  59{:}29  \\
9& \PSL_4(3)&  6  &  3^3{:}\PSL_3(3)  \\
10& \PSU_3(5)&  7  &  5^{1+2}_{+}{:}8  \\
11& \PSp_4(3)&  6 &  3^{1+2}_{+}{:}\Q_8,\,3^{1+2}_{+}{:}2.\A_4 \\
12& \Sp_6(2)&  6,\,7,\,8 &  \PSU_3(3){:}2 \\
13& \Sp_6(2)&  8 &  3^{1+2}_{+}{:}8{:}2,\,3^{1+2}_{+}{:}2.\Sy_4,\,\PSL_2(8),\,\PSL_2(8){:}3,\,\PSU_4(2){:}2 \\
14& \Sp_8(2)&  6,\,7,\,8,\,9,\,10 & \mathrm{SO}^-_8(2) \\
15& \Omega_7(3)&  8,\,9 &  3^{3+3}{:}\PSL_3(3) \\
16& \Omega_7(3)&  9 &  3^3{:}\PSL_3(3),\,\PSL_4(3),\,\PSL_4(3){:}2,\,\G_2(3) \\
17& \Omega^{+}_8(2)&  6,\,7,\,8, \,9 & \Sp_6(2) \\
18& \Omega^{+}_8(2)& 8 & \A_9  \\
19& \Omega^{+}_8(2)& 8,\,9 & \PSU_4(2),\,\PSU_4(2){:}2,\,3\times\PSU_4(2),\,(3\times\PSU_4(2)){:}2 \\
20& \Omega^{+}_8(2)& 9 & 2^4{:}15.4,\,2^6{:}15,\,2^6{:}15.2,\,2^6{:}15.4,\,\A_8,\,\Sy_8\\
21& \Omega^{+}_8(2)& 9 & 2^4{:}\A_5\le L_\a\le2^6{:}\A_8\\
22& \POm^{+}_8(3)& 8,\,9 & 3^6{:}\PSL_4(3) \\
23& \POm^{+}_8(3)& 9 & \Omega_7(3)\\
24& \Omega^{-}_{10}(2)& 12 & 2^8{:}\Omega^{-}_8(2)\\
 \hline
\end{array}\]
\caption{Exceptional quasiprimitive groups with a transitive subgroup $\A_n$}\label{Sporadic}
\end{table}

Let us outline the proof of Theorem~\ref{th:1}, in which Proposition~\ref{Xia} will play a crucial role.
Let $\Ga$ be a connected $(G,2)$-arc-transitive Cayley graph on an alternating group $H\cong\A_n$ with $n\ge 5$
and $H\leq G$, and let $\a$ be a vertex of $\Ga$.
Take $M$ to be a maximal intransitive normal subgroup of $G$, and let $X=G/M$ and $Y=HM/H$. 
From Theorem \ref{th:Praeger} we see the quotient graph $\Ga_M$ is $X$-arc-transitive and $X$-vertex-quasiprimitive, and $X$ contains a vertex-transitive subgroup $Y\cong H$. 
Thus the candidates for the triple $(X,Y,X_v)$ are given in Proposition~\ref{Xia}. We investigate those candidates and give a characterization of such graphs $\Ga_M$ in Section\ref{sec1}. 
We shall see that most of the candidates for $(X,Y,X_v)$ satisfy $\Soc(X)=\A_{n+k}$ with $1\leq k \leq 5$, $Y=\A_n$ and that $X_v$ is a $k$-transitive group on $n+k$ points, as in Case~(a.1) of Proposition~\ref{Xia}. 
Moreover, the possibility for $k \geq 3$ will be excluded, and the graph $\Ga$ arising from this case must will be shown to satisfy part~(b) of Theorem \ref{th:1}. 
In Section 4, we consider the cover of $\Ga_M$ and complete the proof of Theorem~\ref{th:1}, where the same technique as in~\cite{DFZ17} will be used.

\subsection{Some technical lemmas}

Let $G$ be a permutation group on a set $\Ome$.
Recall that $G$ is said to be \emph{semiregular} on $\Ome$
if $G_{\a}=1$ for each $\a\in\Ome$.
A nontrivial cyclic subgroup $\l g\r$
of $G$ is semiregular on $\Ome$ if and only if
$g$ can be expressed as a disjoint product
of $s$ cycles of length $t$ such that $st=|\Ome|$ and $t>1$.

\begin{lemma}\label{Normalizer}
Let $G$ be a permutation group on a set $\Ome$,
let $H$ be a subgroup of $G$,
and let $H_\a$ be the stabilizer of a point $\a\in\Ome$ in $H$.
If $H_\a$ contains an element acting fixed-point-freely on $\Ome\setminus\{\a\}$,
then $\Nor_G(H_\a)\le G_\a$.
In particular, if $H$ is $2$-transitive then $\Nor_G(H_\a)\le G_\a$.
\end{lemma}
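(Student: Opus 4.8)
The plan is to exploit the elementary fact that conjugation transports fixed-point sets compatibly with the action on $\Ome$. First I would fix, by hypothesis, an element $h\in H_\a$ that acts fixed-point-freely on $\Ome\setminus\{\a\}$, so that its fixed-point set in $\Ome$ is exactly $\Fix(h)=\{\a\}$. Now take an arbitrary $g\in\Nor_G(H_\a)$. Since $g$ normalizes $H_\a$, the conjugate $h^g$ again lies in $H_\a$ and therefore fixes $\a$. On the other hand, a direct check with the permutation action gives the identity $\Fix(h^g)=\Fix(h)^g=\{\a^g\}$. Putting these together, $\a\in\Fix(h^g)=\{\a^g\}$, which forces $\a^g=\a$, that is, $g\in G_\a$. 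As $g$ was arbitrary in $\Nor_G(H_\a)$, this yields $\Nor_G(H_\a)\le G_\a$.

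For the final (``in particular'') assertion I would reduce the $2$-transitivity hypothesis to the situation just handled by producing the required fixed-point-free element inside $H_\a$. If $H$ is $2$-transitive on $\Ome$, then $H_\a$ is transitive on $\Ome\setminus\{\a\}$; provided $|\Ome|\ge3$ this is a transitive action of degree at least $2$, so Jordan's classical theorem on the existence of derangements supplies an element of $H_\a$ with no fixed point on $\Ome\setminus\{\a\}$, which is precisely the element needed above. The degenerate case $|\Ome|=2$ does not occur in the intended applications, where $\Ome$ plays the role of a neighbourhood of valency at least $3$, so I would simply note this and pass to the main statement.

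I do not expect a genuine obstacle here: the whole argument is short once the conjugation identity $\Fix(h^g)=\Fix(h)^g$ is recorded, and the only point demanding care is the bookkeeping of the (right) permutation-action convention when verifying that identity. The sole external input is the standard fact that a finite transitive group of degree at least $2$ contains a derangement, used only for the ``in particular'' clause; everything else is immediate.
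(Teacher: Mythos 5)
Your argument is correct and is essentially identical to the paper's own proof: both fix a derangement $h\in H_\a$ with $\Fix(h)=\{\a\}$, use $\Fix(h^g)=\{\a^g\}$ together with $h^g\in H_\a$ to force $\a^g=\a$, and invoke Jordan's theorem on derangements for the ``in particular'' clause. Your extra remark about the degenerate case $|\Ome|=2$ is a sensible precaution but does not change the argument.
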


\begin{proof}
Let $g\in\Nor_G(H_\a)$, and let $h\in H_\a$ such that $h$ has no fixed-point on $\Ome\setminus\{\a\}$.
Then $\mathsf{Fix}(h)=\{\a\}$, and so $\mathsf{Fix}(h^g)=\{\a^g\}$.
Since $h^g\in (H_\a)^g=H_\a$, we have $\a\in\mathsf{Fix}(h^g)$.
It follows that $\a\in\{\a^g\}$.
Hence $\a^g=\a$, that is, $g\in G_\a$.
Thus $\Nor_G(H_\a)\le G_\a$, as required.

Now suppose that $H$ is $2$-transitive.
Then $H_\a$ is transitive on $\Ome\setminus\{\a\}$.
By a theorem of Jordan, there exits $h\in H_\a$ such that $h$ has no fixed-point on $\Ome\setminus\{\a\}$.
This implies $\Nor_G(H_\a)\le G_\a$ by the above conclusion.
\end{proof}

Recall that a \emph{section} of a group $G$ is a quotient of a subgroup of $G$.

\begin{lemma}\label{lm:miniM}
Let $B=M{:}H$ where $H=\A_{n}$ with $n\geq 9$. Suppose that $|M|_r<r^{n-2}$ for each prime divisor $r$ of $|M|$ and that $M$ has no section isomorphic to $H$. Then $B=M \times H$.
\end{lemma}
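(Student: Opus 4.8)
\textbf{Proof proposal for Lemma~\ref{lm:miniM}.}

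The plan is to show that the conjugation action of $H=\A_n$ on $M$ is trivial, which forces $B=M\times H$ since $B=M{:}H$. The key tool will be the hypothesis that $M$ has no section isomorphic to $H=\A_n$: I would exploit the fact that $\A_n$ with $n\ge 9$ is a simple group whose smallest faithful permutation representation and whose minimal faithful representations in various characteristics are all of dimension or degree roughly $n$ or larger, and use the numerical bound $|M|_r<r^{n-2}$ to rule out nontrivial actions. First I would consider a minimal $H$-invariant section of $M$ on which $H$ acts nontrivially. Since $H$ is simple, any nontrivial homomorphism from $H$ into the automorphism group of such a section must be injective, so $H$ embeds into $\Aut$ of that section.

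Concretely, I would reduce to the case where $M$ is an $H$-invariant elementary abelian $r$-group for some prime $r$, i.e.\ an $\bbF_r H$-module $V$, by passing to a chief factor of $B$ lying inside $M$ on which $H$ acts nontrivially. If no such chief factor exists then $H$ centralizes every chief factor inside $M$, and a standard coprime-type or solvability argument (together with $H$ acting trivially on each factor of a normal series) should give that $H$ centralizes $M$, yielding $B=M\times H$ directly. So assume $V$ is a nontrivial irreducible $\bbF_r H$-module. The condition that $M$ has no section isomorphic to $H$ is irrelevant here (that condition is aimed at nonabelian composition factors of $M$), so the work is to bound $\dim_{\bbF_r} V$ from below and contradict $|M|_r<r^{n-2}$. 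The main point is that the minimal dimension of a nontrivial irreducible $\bbF_r\A_n$-module is at least $n-2$ (the fully deleted permutation module has dimension $n-1$ or $n-2$ depending on whether $r\mid n$), so $|V|=r^{\dim V}\ge r^{n-2}$, giving $|M|_r\ge r^{n-2}$ and contradicting the hypothesis. Hence no nontrivial irreducible $\bbF_r H$-module occurs inside $M$, so $H$ acts trivially on every abelian chief factor inside $M$.

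It remains to handle nonabelian chief factors of $B$ inside $M$ and to pass from ``trivial action on each chief factor'' to ``trivial action on $M$.'' For a nonabelian chief factor, it is of the form $S^k$ for a nonabelian simple group $S$, and $H$ permutes the $k$ copies; since $H$ is simple and has no nontrivial action here — a nontrivial action would either permute the factors (giving $H\to\Sy_k$, forcing a transitive constituent and a section of $H$ acting inside, hence $H$ itself as a section of $M$, contradicting the no-section hypothesis) or act nontrivially inside some $S$ — I would argue that the no-section hypothesis on $M$ together with simplicity of $H$ kills this case. Finally, to conclude that $H$ centralizes $M$ outright, I would take a chief series of $B$ refining $M\trianglelefteq B$, observe that $H=\A_n$ is perfect and acts trivially on each factor, and invoke the standard fact that a perfect group acting trivially on all factors of a normal series of a group $M$ (with the series $H$-invariant) in fact centralizes $M$ — this uses that the group of automorphisms of $M$ stabilizing such a series is nilpotent-by-trivial on the relevant part, so a perfect group mapping into it must map trivially.

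\textbf{Main obstacle.} The crux is the modular representation-theoretic lower bound: establishing that \emph{every} nontrivial irreducible $\bbF_r\A_n$-module has dimension at least $n-2$, uniformly over all primes $r$. This is where the precise exponent $n-2$ in the hypothesis $|M|_r<r^{n-2}$ comes from, and it is exactly the bound given by the (possibly) deleted natural permutation module; the delicate point is the characteristic dividing $n$ case, where the natural module dimension drops to $n-2$, so the hypothesis is sharp and cannot be improved. I expect the clean passage from ``centralizes each chief factor'' to ``centralizes $M$'' to require care as well, since in general trivial action on factors does not give trivial action on the whole group — but perfectness of $\A_n$ should rescue this.
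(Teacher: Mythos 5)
Your overall strategy is sound and, at its core, rests on the same key input as the paper's proof: the lower bound $n-2$ (the fully deleted permutation module; \cite[Proposition~5.3.7]{K-Lie}) for the degree of a faithful representation of $\A_n$ in any characteristic, played off against the hypothesis $|M|_r<r^{n-2}$. The organization differs: the paper locates the \emph{first} chief factor $M_{j+1}/M_j$ of $B$ at which $M_{j+1}H$ fails to split, shows the $H$-action there must be faithful (using that $H$ is characteristic in $M_jH=M_j\times H$ because $M_j$ has no section isomorphic to $\A_n$), and derives a contradiction from that single factor; you instead propose to prove the action on \emph{every} chief factor is trivial and then invoke nilpotency of the stability group of a normal series together with perfectness of $\A_n$. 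That last step is legitimate (Kaloujnine/P.~Hall), so this difference is only one of bookkeeping.

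There is, however, a genuine error in your treatment of nonabelian chief factors. In the sub-case where the induced map $H\to\Sy_k$ permuting the $k$ simple direct factors of $S^k$ is nontrivial (hence faithful, $H$ being simple), you claim this forces ``$H$ itself as a section of $M$,'' contradicting the no-section hypothesis. It does not: $H$ embeds into $\Aut(S^k)\cong\Aut(S)\wr\Sy_k$, not into $S^k$, and a group permuting the coordinates of $S^k$ need not be a section of $S^k$ (for instance $\A_n$ permutes the factors of $\A_5^{\,n}$ through its natural degree-$n$ action, yet $\A_n$ is not a section of $\A_5^{\,n}$). The no-section hypothesis only disposes of the complementary sub-case, where $H\to\Sy_k$ is trivial, so that $H$ embeds into some $\Aut(S)$ and hence into $S$ because $\Out(S)$ is solvable. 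For the faithful-permutation sub-case the correct contradiction is numerical, exactly as in your abelian case: faithfulness of $\A_n$ in $\Sy_k$ forces $k\ge n$, so for any prime $r$ dividing $|S|$ one gets $|M|_r\ge |S^k|_r\ge r^k\ge r^n>r^{n-2}$, against the hypothesis. This is precisely how the paper closes that case, and the repair costs you nothing since the bound is already in your toolkit.
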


\begin{proof}
Suppose for a contradiction that $B \neq M \times H$. Let
\[
1=M_0 <M_1<\dots<M_s=M\lhd B
\]
be a normal series of $B$ such that $M_{i + 1}/ M_{ i}$ is minimal normal in $B/M_{i}$ for each $0 \leq i \leq s-1$.
Since $B \neq M \times H$, there exists $0\leq j \leq s-1$ such that $M_{j+1}H \neq  M_{j+1}\times H$ but $M_iH = M_i\times H$ for each $0 \leq i \leq j$. Since $M\cap H=1$, we have $M_{j}H/M_j\cong H \cong \A_n$ and $M_{j+1}H/M_j=(M_{j+1}/M_j){:}(M_{j}H/M_j)$.  
If the conjugation action of $M_{j}H/M_j$ on $M_{j+1}/M_j$ is trivial, then $M_{j+1}H/M_j=(M_{j+1}/M_j) \times (M_{j}H/M_j)$ and so $M_jH \lhd M_{j+1}H$.
If this is the case, then since $M_jH=M_j\times H$ and $M$ has no section isomorphic to $\A_{n}$, it follows that $H$ is characteristic in $M_jH $ and thus normal in $M_{j+1}H$, which implies that $M_{j+1}H=M_{j+1} \times H$, a contradiction. 
Hence the conjugation action of $M_{j}H/M_j$ on $M_{j+1}/M_j$ is nontrivial, and is thus faithful by the simplicity of $M_jH/M_j$.
Therefore, $\A_n \cong M_{j}H/M_j \leq \Aut(M_{j+1}/M_j)$.

Write $M_{j+1}/M_j=T_1\times \cdots \times T_m $ with $T_1 \cong \cdots \cong T_m \cong T$ for some simple group $T$.  If $T \cong \ZZ_p$ for some prime $p$, then $\A_n\le\Aut(\ZZ_p^m)\cong\GL_m(p)$, and by~\cite[Proposition~5.3.7]{K-Lie} we have $m\ge n-2$, which yields that $|M|_p \geq|M_{j+1}/M_j|=p^m\geq p^{n-2}$, contradicting the condition of the lemma. Hence $T$ is a nonabelian simple group. Then since $\A_n \leq \Aut(T^m)\cong \Aut(T) \wr \Sy_m$ and $M$ has no section isomorphic to $\A_n$, we derive that $\A_n \cap  \Aut(T) ^m=1$ and so $\A_n\lesssim\Sy_m$. This implies $n\leq m$. Then for any prime divisor $r$ of $|T|$ we have $|M|_r\geq|T|^m\geq r^m\geq r^n$, again contradicting the condition of the lemma.
 \end{proof}

Let $G$ be a $2$-transitive group of degree $n$ not containing $\A_n$.
Pyber \cite{Pyber93} gave the bound $|G|\leq n^{c\log^2n}$ for some constant $c >0$, and in particular, $|G|\leq n^{8\lceil 4\log n \rceil\log n}$ if $n\geq 400$, where $\log n$ means logarithm to the base $2$. 
His proof does not rely on the classification of finite simple groups (CFSG for short), and he noted that one can prove $|G| \lessapprox n^{(1+o(1))\log n}$ by using CFSG. In the following lemma we shall prove that $|G|<n(n-1)2^{n-4}$. Computation shows that
\[ 
n(n-1)2^{n-4} < n^{8\lceil 4\log n \rceil\log n}\text{ for } n < 168840. 
\]
Thus if we use Pyber's result, we would still need to investigate $2$-transitive groups of degree less than $168840$. 
Hence for the convenience of the proof we shall use CFSG. 
Then the list of $2$-transitive groups can be found in \cite[Tables~7.3~and~7.4]{Cameron}.
Note that a $2$-transitive group is affine or almost simple.

\begin{lemma}\label{lm:order2trans}
Let $G$ be a $2$-transitive permutation group with degree $n\geq11$ not containing $\A_n$. Then $ |G| <n(n-1)2^{n-4}$.
\end{lemma}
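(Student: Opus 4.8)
The plan is to prove the bound $|G| < n(n-1)2^{n-4}$ by invoking CFSG and running through the classification of $2$-transitive groups as given in \cite[Tables~7.3~and~7.4]{Cameron}, which the paper has already committed to using. Since every $2$-transitive group is affine or almost simple, and since we have excluded $\A_n$ and (because $2$-transitivity with $n\ge 11$ forces $n\ge 11$) also $\Sy_n$ via the hypothesis, the task reduces to checking the remaining families one at a time against the explicit threshold $f(n):=n(n-1)2^{n-4}$. The key observation that makes this tractable is that $f(n)$ grows like $2^n$ up to polynomial factors, whereas every infinite family of $2$-transitive groups not containing $\A_n$ has order bounded by a quasipolynomial $n^{O(\log n)}$ (this is Pyber's bound, but here we want the sharper family-by-family comparison). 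So for each family I expect the inequality to hold comfortably once $n$ is moderately large, and the real work is controlling the small cases.

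\medskip

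\textbf{Step 1: Affine groups.} Here $n=p^d$ for a prime $p$, and $G\le\AGL_d(p)$, so $|G|\le p^d|\GL_d(p)| = p^d\cdot p^{d(d-1)/2}\prod_{i=1}^d(p^i-1) < p^d\cdot p^{d^2}=n\cdot n^d$. I would compare $n^{d+1}=p^{d(d+1)}$ against $f(n)\approx n^2 2^{n-4}=n^2 2^{p^d-4}$. Since $p^d$ grows exponentially in $d$ (for fixed $p\ge 2$) while $d(d+1)$ is only quadratic in $d$, the bound $2^{p^d}$ dominates $p^{d(d+1)}$ for all but finitely many pairs $(p,d)$; I would record the finite list of small $(p,d)$ with $n=p^d\ge 11$ and verify each directly (e.g.\ $n=11$ is not a prime power, $n=16,25,27,32,\dots$ need individual checks, but $2^{n-4}$ wins quickly).

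\medskip

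\textbf{Step 2: Almost simple groups.} I would go down \cite[Table~7.4]{Cameron} family by family, using the standard degree formulas and order bounds. For $\PSL_d(q)$ acting on $n=(q^d-1)/(q-1)$ projective points, $|\PGaL_d(q)|$ is polynomial in $q$ of degree about $d^2$, while $n\ge q^{d-1}$, so again $2^{n-4}$ dominates once $n$ is large; similarly for the unitary, symplectic, and Ree/Suzuki families $\PSU_3(q)$, $\Sp_{2d}(2)$, $\Sz(q)$, $\Ree(q)$, and for the projective groups of small dimension. The sporadic $2$-transitive actions (Mathieu groups, $\HS$, $\Co_3$, etc.) form a finite list with known orders and degrees, all of which I would check by direct computation against $f(n)$.

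\medskip

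\textbf{The main obstacle} will be the bookkeeping of small-degree cases rather than any single hard inequality: for each family the asymptotic comparison $|G|<2^{n-4}\cdot\text{(poly)}$ is routine, but the crossover point where $f(n)$ overtakes $|G|$ can sit at a modestly large $n$, so I must pin down exactly which degrees $n\ge 11$ need hand-verification and confirm the inequality there. In particular the symplectic groups $\Sp_{2d}(2)$ (with two $2$-transitive actions of degrees $2^{2d-1}\pm 2^{d-1}$) are the delicate case, since their order $2^{d^2}\prod(2^{2i}-1)$ is itself nearly $2^{(n\text{-ish})}$; I would check carefully that even there $|\Sp_{2d}(2)|<n(n-1)2^{n-4}$, exploiting that the degree $n\approx 2^{2d-1}$ is roughly the \emph{square} of $2^d$ so that $2^{n-4}$ still beats $2^{d^2}$ for $d\ge 3$, with the base cases $d=3$ (degrees $28,36$) and $d=4$ handled explicitly. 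Once every family clears the threshold, the lemma follows.
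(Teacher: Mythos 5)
Your proposal is correct and takes essentially the same route as the paper: both invoke CFSG and verify the bound family by family through the classification of $2$-transitive groups (affine case via $|G|\le|\AGL_d(p)|$, then the linear, symplectic, unitary, Suzuki/Ree and finitely many remaining almost simple actions), with small degrees checked directly. The only organizational difference is that the paper first proves the intermediate inequality $n^{1.5\log n}<n(n-1)2^{n-4}$ for $n\ge 31$ and compares most families against $n^{1.5\log n}$ (handling $11\le n\le 30$ by direct inspection, and treating $\PSL_2(q)$ in degree $q+1$ by a direct comparison), whereas you compare each family directly against $n(n-1)2^{n-4}$; this does not change the substance.
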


\begin{proof}
Computation shows that $n^{1.5\log n}<n(n-1)2^{n-4}$ for $n\geq 31$.
Hence for $n\geq 31$ it suffices to show $|G|\leq n^{1.5\log n}$.
By the classification of $2$-transitive groups, it is straightforward to verify the conclusion for $n\leq30$. 
Thus we assume $n\geq 31$ for the rest of the proof.

\underline{Case~1}: $G$ is affine. 
Then $G \leq \AGL_d(p)$ for some prime $p$ and integer $d$ such that $n=p^d$, and so $d+1 =\log_p n+1 \leq \log n+1 <1.5 \log n$.
Consequently,
\[ 
|G| \leq |\AGL_d(p)|=p^d\prod_{i=0}^{d-1} (p^d-p^i)<p^d\prod_{i=0}^{d-1}p^d=p^{d(d+1)}=n^{d+1}<n^{1.5\log n}.  
\]

\underline{Case~2}: $\Soc(G)=\PSL_d(q)$ with $n=(q^d-1)/(q-1)$, where $d\geq 2$ and $q=p^e$ for some prime $p$ and integer $e$. 
If $d=2$, then
\[  
|G|\leq e(d,q-1)|\PSL(2,q)|=q(q+1)(q-1)e\leq q(q+1)2^{q-3}=n(n-1)2^{n-4}.  
\]
Now let $d\geq 3$.
Then $|G| \leq 2e(d,q-1)|\PSL_d(q)|$. The candidates of the pair $(d,q)$ such that $31\leq n \leq 63$ are $(5, 2)$, $(6, 2)$, $(4, 3)$, $(3, 5)$, $(3, 7)$, and direct calculation shows that the lemma is true for these candidates. For $n\geq 64$, since $n>q^{d-1}$, we have $d+2 < \log_q n+3 \leq \log n+3 \leq 1.5\log n$ and $q^{(d-1)(d+2)}<n^{d+2}<n^{1.5\log n}$, which leads to
\[  
|G|\leq 2eq^{d-1}\prod_{i=0}^{d-2}(q^d-q^i) <2eq^{d-1}\prod_{i=0}^{d-2}q^d= 2eq^{d^2-1}\leq q^{d^2}<q^{(d-1)(d+2)}<n^{1.5\log n}.  
\]

\underline{Case~3}: $G=\Sp_{2d}(2)$ with $n=2^{2d-1}+2^{d-1}$ or $2^{2d-1}-2^{d-1}$, where $d\geq 3$. 
If $d=3$, then $G=\Sp_6(2)$, and so $|G|<31^{1.5\log31}\leq n^{1.5\log n}$ as $n\geq31$. 
If $d\geq4$, then $n\geq 2^{2d-1}-2^{d-1}>2^{2d-2}$ and $1.5 \log n>3d-3>d+2$, which implies that 
\[ 
|G|= 2^{d^2}\prod_{i=1}^{d}(2^{2i}-1) <2^{d^2}\prod_{i=1}^{d}2^{2i}=2^{d(2d+1)}\leq2^{(2d-2)\cdot (d+2)}<n^{d+2}<n^{1.5\log n}. 
\]

\underline{Case~4}: $\Soc(G)=\PSU_3(q)$ with $n=q^3+1$, where $q=p^e\geq4$ for some prime $p$ and integer $e$. 
In this case, we have
\[ 
|G| \leq 2(3,q+1)e|\PSU_3(q)| =2eq^3(q^3+1)(q^2-1)<q^9<(q^3+1)^3=n^3<n^{1.5\log n}. 
\]

\underline{Case~5}: $\Soc(G)=\Sz(q)$ with $n=q^2+1$, where $q=2^{2e+1} \geq 8$ for some integer $e$. 
Then
\[ 
|G| \leq |\Sz(q)|(2e+1)=(2e+1)q^2(q^2+1)(q-1)<q^6<(q^2+1)^3=n^3<n^{1.5\log n}. 
\]

\underline{Case~6}: $\Soc(G)=\Ree(q)$ and $n=q^3+1$, where $q=3^{2e+1} \geq 27$ for some integer $e$. 
In this case,
\[ 
|G| \leq |\Ree(q)|(2e+1)=(2e+1)q^3(q^3+1)(q-1)<q^8<(q^3+1)^3=n^3<n^{1.5\log n}.
\] 

For $G$ not in any of Cases~1--6, by the classification of $2$-transitive groups, there are finitely many candidates for $G$. For these candidates, one can directly verify the conclusion of the lemma. 
\end{proof}

\section{Vertex-quasiprimitive case}\label{sec1}

In this section we prove the following proposition.

\begin{proposition}\label{Quasi-case}
Let $\Ga$ be a connected $(G,2)$-arc-transitive graph, let $\a$ be a vertex of $\Ga$.
Suppose that $G$ is quasiprimitive on $V(\Ga)$ and has a vertex-transitive subgroup $H=\A_n$ with $n\ge5$.
Then $G$ is almost simple, and one of the following holds:
\begin{enumerate}[{\rm (a)}]
\item $\Soc(G)=H$;
\item $\Soc(G)=\A_{n+1}$, and $\Soc(G)_\a$ is a transitive subgroup of $\A_{n+1}$;
\item $\Soc(G)=\A_{n+2}$, $\Soc(G)_\a$ is a $2$-transitive subgroup of $\A_{n+2}$, the action of $G_\a$ on $\Ga(\a)$ is faithful, and $\Ga$ is $(\Soc(G),2)$-arc-transitive;
\item $\Ga=\K_m$, $\Soc(G)=\A_m$ such that $m$ is the index of a subgroup of $\A_n$, and $\Soc(G)_\a=\A_{m-1}$;
\item $\Ga=\K_{p+1}$, $H=\A_5$, $G=\PGL_2(p)$ with $p\in\{11,19,29,59\}$, and $G_\a=\AGL_1(p)$;
\item $\Ga=\K_{12}$, $H=\A_5$, $G=\M_{12}$, and $G_\a=\M_{11}$.
\end{enumerate}
\end{proposition}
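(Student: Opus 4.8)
My plan is to exploit Proposition~\ref{Xia}, which classifies the quasiprimitive groups $G$ on $V(\Ga)$ containing a transitive $\A_n$. The hypotheses of Proposition~\ref{Quasi-case} match those of Proposition~\ref{Xia} exactly (with $\Ome=V(\Ga)$ and $H=\A_n$ transitive), so I first invoke it to split into cases (a)--(c) of Proposition~\ref{Xia}. The main work is then to show that cases (b) and (c) of Proposition~\ref{Xia}—the primitive product-action cases with socle $\A_n\times\A_n$—cannot support a $(G,2)$-arc-transitive graph, and to refine case (a) into the six outcomes listed. I expect case~(a.1), the possibilities $L=\A_{n+k}$ with $1\le k\le 5$, to be the heart of the matter: I must pin down exactly which $k$ survive the $2$-arc-transitivity constraint.

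First I would dispose of the product-action cases. In cases (b) and (c) of Proposition~\ref{Xia}, $G$ is primitive with socle $\A_n\times\A_n$ (and $H$ regular), which is not almost simple; I would show these are incompatible with $2$-arc-transitivity. The cleanest route is via the local action: by Theorem~\ref{Weiss} the group $G_\a^{\Ga(\a)}$ is $2$-transitive, and I would argue that a socle of the form $\A_n\times\A_n$ forces a structure on the point stabilizer $G_\a$ that contradicts the $2$-transitivity of $G_\a^{\Ga(\a)}$ (using Lemma~\ref{Insoluble} to relate composition factors of $G_\a$ to sections of $G_\a^{\Ga(\a)}$). This establishes that $G$ is almost simple, as claimed in the statement.

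Next I turn to case~(a) of Proposition~\ref{Xia}, writing $L=\Soc(G)$. The subcase $L=H$ gives outcome~(a) of the proposition directly. For $L=\A_{n+k}$ in (a.1), $L_\a$ is $k$-transitive on $n+k$ points; I must analyse each $k$. For $k=1$, $L_\a$ is transitive, giving outcome~(b). For $k=2$, $L_\a$ is $2$-transitive, and I would apply Lemma~\ref{(L,2)-Trans} to conclude $\Ga$ is $(L,2)$-arc-transitive and deduce the faithful action on $\Ga(\a)$, giving outcome~(c); here the key point, and the expected main obstacle, is ruling out $k\ge 3$. For $k\ge 3$ the stabilizer $L_\a$ is at least $3$-transitive on $n+k$ points, which is an extremely restrictive condition—by the classification of multiply transitive groups only finitely many such $L_\a$ exist—and I would show that the resulting suborbit structure is inconsistent with $\Ga$ being a connected $2$-arc-transitive graph, likely by a counting or maximality argument akin to Lemma~\ref{lm-AnAn-2Sn-2}. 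Subcase~(a.2), where $L=\A_m$ with $L_\a=\A_{m-1}$ and $m$ is the index of a subgroup of $\A_n$, forces $\Ga$ to be the complete graph $\K_m$ (since $L$ acts $2$-transitively, the unique nontrivial suborbit gives $\K_m$), yielding outcome~(d).

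Finally, for subcase~(a.3) I would work through Table~\ref{Sporadic} row by row. Most rows have $L_\a$ with socle a simple group or a structure that is either not $2$-transitive in any suborbit action or fails the connectivity/self-pairing requirements of an orbital graph; these I eliminate using Lemma~\ref{lm-AnAn-2Sn-2}, Theorem~\ref{Weiss}, and direct suborbit analysis (possibly with \magma\ for the small sporadic and classical cases). The surviving entries should be exactly the $\PSL_2(p)$ rows with $p\in\{11,19,29,59\}$ and the $\M_{12}$ row, which I expect to produce complete graphs $\K_{p+1}$ and $\K_{12}$ respectively—these give outcomes~(e) and~(f). The hardest part of this table analysis will be verifying that the relevant $2$-transitive actions of these sporadic stabilizers do yield genuine connected $2$-arc-transitive orbital graphs rather than being excluded, so I would confirm the stabilizer $G_\a=\AGL_1(p)$ (resp.\ $\M_{11}$) arises and that $\Ga$ is complete.
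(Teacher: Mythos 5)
Your overall skeleton (invoke Proposition~\ref{Xia}, kill cases~(b) and~(c), then refine case~(a) by subcases) is the paper's, but two of your steps have genuine gaps. The first concerns case~(b) of Proposition~\ref{Xia}, where $G$ is primitive of holomorph-simple or simple-diagonal type with socle $\A_n\times\A_n$ and $H$ regular. Your proposed ``local'' argument does not close: here $\Soc(G)_\a$ is a full diagonal $\A_n$, so $G_\a$ is almost simple with socle $\A_n$, and Lemma~\ref{Insoluble} only forces $G_\a^{\Ga(\a)}$ to have $\A_n$ as a section --- which is perfectly compatible with $G_\a^{\Ga(\a)}$ being a genuine $2$-transitive group (e.g.\ $\A_n$ or $\Sy_n$ in its natural action). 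So no contradiction with $2$-transitivity of the local action falls out; this is exactly why the paper instead invokes Praeger's O'Nan--Scott theorem for quasiprimitive $2$-arc-transitive graphs \cite[Theorem~6.1]{Praeger1997}, which restricts the quasiprimitive type to holomorph affine, almost simple, twisted wreath or product action and thereby excludes case~(b) outright. (Your local argument is essentially what the paper does for case~(c), where $G_\a\le\Sy_5\wr\Sy_2$ has the non-simple unique minimal normal subgroup $\A_5\times\A_5$; it works there but not in case~(b).)

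The second gap is the faithfulness claim in outcome~(c) and, relatedly, the exclusion of $k\ge3$. You propose to get both the $(L,2)$-arc-transitivity and the faithfulness of $G_\a$ on $\Ga(\a)$ for $k=2$ from Lemma~\ref{(L,2)-Trans}, but that lemma applies only when $L_\a$ (or its local image) is almost simple, and it says nothing about faithfulness. When the $2$-transitive stabilizer $L_\a\le\A_{n+2}$ is affine --- which is precisely the case producing the new examples of Theorem~\ref{Thm2} --- the paper needs the long, separate Lemma~\ref{AffineFaithful}: a Weiss-type analysis of $G_{\a\b}^{[1]}$ forcing $\ASL_e(p^f)\trianglelefteq G_\a\le\AGaL_e(p^f)$, followed by explicit computations with the regular normal subgroup $N$ and its conjugate $N^g$ to derive a contradiction if the action is unfaithful. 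Nothing in your outline substitutes for this, and the same lemma is also what eliminates the $\AGL_d(2)$ row of the $3$-transitive table in the $k\ge3$ analysis (where, additionally, the operative mechanism is not a counting argument but the connectivity constraint $\l L_\a,\Nor_L(L_{\a\b})\r=L$ from Lemma~\ref{Sab64} combined with the normalizer bound of Lemma~\ref{Normalizer}). Without these ingredients the affine branch of case~(a.1) is left unproved.
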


Let $\Ga$, $G$, $\a$ and $H$ be as in the assumption of Proposition~\ref{Quasi-case}, and let $\b\in\Ga(\a)$. Then $G$ satisfies Proposition~\ref{Xia}.

\begin{lemma}\label{part23}
The pair $(G,H)$ does not satisfy part \emph{(b)} or \emph{(c)} of Proposition~$\ref{Xia}$.
\end{lemma}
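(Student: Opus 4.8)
The plan is to treat parts~(b) and~(c) of Proposition~\ref{Xia} separately, in each case exploiting that the local action $G_\a^{\Ga(\a)}$ is a $2$-transitive group of degree $\val(\Ga)$ together with the explicit permutation structure supplied by the O'Nan--Scott description of $G$. Throughout I would write $N=\Soc(G)$; since $N\trianglelefteq G$, the point stabilizer $N_\a=G_\a\cap N$ is normal in $G_\a$, so $N_\a^{\Ga(\a)}\trianglelefteq G_\a^{\Ga(\a)}$, and being a normal subgroup of a $2$-transitive (hence primitive) group, $N_\a^{\Ga(\a)}$ is either trivial or transitive.

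For part~(b), $G$ has simple diagonal type: identifying $V(\Ga)$ with $T:=\A_n$ and $\a$ with $1$, the socle $N=T_1\times T_2$ acts by $(x,y)\colon t\mapsto x^{-1}ty$, so $N_\a$ is the diagonal subgroup $D=\{(x,x)\}\cong\A_n$, whose orbits on $V(\Ga)$ are exactly the conjugacy classes of $\A_n$. First I would show $D^{\Ga(\a)}\neq1$: if $D$ fixed $\Ga(\a)$ pointwise then every neighbour of $\a$ would be a $D$-fixed point of $\Omega$, i.e.\ a central element of $\A_n$; since $\Z(\A_n)=1$ this forces $\Ga(\a)=\emptyset$, a contradiction. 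Hence $D^{\Ga(\a)}$ is transitive, so $\Ga(\a)$ is a single $D$-orbit, namely one nontrivial conjugacy class $C$, and $\val(\Ga)=|C|$. As $D\cong\A_n$ is simple and acts nontrivially on $C$ it acts faithfully, so $G_\a^{\Ga(\a)}$ is a $2$-transitive group containing the nonabelian simple transitive normal subgroup $D^{\Ga(\a)}\cong\A_n$; thus $G_\a^{\Ga(\a)}$ is almost simple with socle $\A_n$. From the classification of $2$-transitive groups, the admissible degrees of such a group are $n$ (the natural action) together with, for $5\le n\le8$, the degrees from the exceptional isomorphisms (namely $6$ for $n=5$, $10$ for $n=6$, and $15$ for $n=7,8$). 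The contradiction then comes from a numerical clash: the smallest nontrivial conjugacy class of $\A_n$ has size $12$ for $n=5$ and $n(n-1)(n-2)/3$ for $n\ge6$ (the $3$-cycles), which equals $40,70,112$ for $n=6,7,8$ and is at least $168$ for $n\ge9$; in every case this strictly exceeds the largest admissible degree, so $|C|>\val(\Ga)=|C|$, which is absurd.

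For part~(c) I would argue directly with the product action of $G\le\Sym(6)\wr\Sym(2)$ on the $36$ points $\{1,\dots,6\}^2$. Here $N=\A_6\times\A_6$ and $N_\a=\A_5\times\A_5$, and the $N$-suborbits are the products of the two $\A_6$-suborbits of sizes $1$ and $5$ on $6$ points, hence have sizes $1,5,5,25$; the $G$-suborbits are unions of these, so $\val(\Ga)\in\{5,10,25\}$. A suborbit of size $5$ yields a disconnected orbital graph (a disjoint union of cliques on the rows, respectively columns), contradicting connectivity. For the suborbits of size $10$ and $25$, the stabilizer $G_\a$ preserves the evident product/partition structure on $\Ga(\a)$ (its action embeds in $\Sym(5)\wr\Sym(2)$ acting on $5+5$, respectively $5\times5$, points), so $G_\a^{\Ga(\a)}$ is imprimitive and in particular not $2$-transitive, again a contradiction. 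Thus no connected $(G,2)$-arc-transitive graph arises in part~(c).

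The conceptual core, and the step I expect to carry the most weight, is the reduction in part~(b): once the vanishing-centre argument forces $\Ga(\a)$ to be a full conjugacy class, everything hinges on the clash between conjugacy-class sizes and $2$-transitive degrees of $\A_n$, which is clean but must be checked to cover the exceptional small-degree actions for $n\le8$. Part~(c) is essentially a finite verification and should pose no real difficulty beyond bookkeeping of the product-action suborbits.
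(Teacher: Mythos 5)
Your proof is correct, but it follows a genuinely different route from the paper's. For part~(b), the paper disposes of the simple diagonal/holomorph simple case in one line by quoting Praeger's theorem that a quasiprimitive group acting $2$-arc-transitively on a connected graph must have type holomorph affine, almost simple, twisted wreath or product action; you instead reprove the relevant special case from scratch, identifying $\Ga(\a)$ with a nontrivial conjugacy class of $\A_n$ (via the vanishing-centre argument and normality of $N_\a^{\Ga(\a)}$ in the primitive group $G_\a^{\Ga(\a)}$) and then playing the minimal class size $12$ (for $n=5$) or $n(n-1)(n-2)/3$ (for $n\ge6$) against the possible $2$-transitive degrees $n$, $6$, $10$, $15$ of a group with socle $\A_n$. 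This is longer but self-contained and, pleasingly, does not need the local analysis at all. For part~(c), the paper argues group-theoretically: $G_\a$ has unique minimal normal subgroup $\A_5\times\A_5$, so $G_\a^{\Ga(\a)}$ is either solvable (excluded by Lemma~\ref{Insoluble}) or isomorphic to $G_\a$, and the latter is visibly not $2$-transitive; you instead enumerate the suborbits ($1,5,5,25$ for $N_\a$, fused to $1,10,25$ for $G_\a$) and rule each one out directly. Both are valid; your version avoids invoking Lemma~\ref{Insoluble}.

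One small imprecision in your part~(c): for the suborbit of length $25$, the induced group embeds in $\Sym(5)\wr\Sym(2)$ in \emph{product} action on $5\times5$ points, which is primitive (of rank $3$), not imprimitive as you assert. The conclusion you need still follows immediately — a transitive subgroup of a rank-$3$ group has rank at least $3$, hence is not $2$-transitive — so this is a misnamed reason rather than a gap, but the word ``imprimitive'' should be replaced by ``of rank $3$'' (or ``not $2$-transitive'') in that case; it is correct for the suborbit of length $10$, where the two blocks of size $5$ do form an invariant partition.
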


\begin{proof}
Recall that the O'Nan-Scott-Praeger Theorem divides quasiprimitive permutation groups into eight types, see~\cite[Section~5]{Praeger1997}.
Since $\Ga$ is $(G,2)$-arc-transitive and $G$ is quasiprimitive on $V(\Ga)$, it is shown in~\cite{Praeger92}
(see also~\cite[Theorem~6.1]{Praeger1997}) that $G$ has type holomorph affine, almost simple, twisted wreath product or product action.

If $(G,H)$ satisfies part (b) of Proposition~\ref{Xia},
then $G$ is of type holomorph simple or simple diagonal, a contradiction.

Suppose that $(G,H)$ satisfies part (c) of Proposition~\ref{Xia}.
Then $G\le\Sy_6\wr\Sy_2$ acts primitively on $V(\Ga)$ by product action with $\Soc(G)=\A_6\times\A_6$ and $|V(\Ga)|=36$.
Hence
\[
(\A_5\times\A_5).2\le G_\a\le\Sy_5\wr\Sy_2,
\]
and $G_\a$ has a unique minimal normal subgroup $\A_5\times\A_5$.
Since $G_\a^{\Ga(\a)}$ is a homomorphic image of $G_\a$, it follows that either $G_\a^{\Ga(\a)}\cong G_\a$ or $G_\a^{\Ga(\a)}$ is solvable.
However, Lemma~\ref{Insoluble} implies that $G_\a^{\Ga(\a)}$ is nonsolvable.
Thus $G_\a^{\Ga(\a)}\cong G_\a$, from which we deduce that $G_\a^{\Ga(\a)}$ is not $2$-transitive, contradicting the $(G,2)$-arc-transitivity of $\Ga$.
\end{proof}

Now assume that $(G,H)$ satisfies part (a) of Proposition~\ref{Xia}, so that $G$ is almost simple.
Let $L=\Soc(G)$.
Then according to Proposition~\ref{Xia}, either $L=H$, or $L=HL_\a$ satisfies one of the following:
\begin{enumerate}
\item[{\rm (a.1)}] $L=\A_{n+k}$ with $1\le k \le5$, and $L_\a$ is $k$-transitive on $n+k$ points;
\item[{\rm (a.2)}] $L=\A_{m}$ and $L_\a=\A_{m-1}$, where $m$ is the index of a subgroup in $A_n$;
\item[{\rm (a.3)}] $(L, n, L_\a)$ lies in Table~\ref{Sporadic}.
\end{enumerate}

It is clear that case~(a.2) leads to part~(d) of Proposition~\ref{Quasi-case}. Thus we only need to deal with cases~(a.1) and~(a.3). These two cases will be treated in the following two subsections, respectively, after the next lemma.

\begin{lemma}\label{(L,2)-ArcT}
If $L_\a$ is nonsolvable, then $\Ga$ is $(L,2)$-arc-transitive.
\end{lemma}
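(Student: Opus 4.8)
The plan is to reduce the statement to showing that the local action $L_\a^{\Ga(\a)}$ is $2$-transitive, and then to exploit that $L_\a^{\Ga(\a)}$ is an insoluble normal subgroup of the $2$-transitive group $G_\a^{\Ga(\a)}$. First I would record that in part~(a) of Proposition~\ref{Xia} we always have $H\le L$, so that $L=\Soc(G)$ is vertex-transitive; consequently $\Ga$ is $(L,2)$-arc-transitive if and only if $L_\a^{\Ga(\a)}$ is $2$-transitive on $\Ga(\a)$. Since $\Ga$ is $(G,2)$-arc-transitive, the group $G_\a^{\Ga(\a)}$ is $2$-transitive, and from $L\trianglelefteq G$ we get $L_\a^{\Ga(\a)}\trianglelefteq G_\a^{\Ga(\a)}$. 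By Lemma~\ref{Insoluble} every composition factor of $L_\a$ is a section of $L_\a^{\Ga(\a)}$, so the hypothesis that $L_\a$ is insoluble forces $L_\a^{\Ga(\a)}$ to be insoluble, in particular nontrivial; as $G_\a^{\Ga(\a)}$ is primitive, $L_\a^{\Ga(\a)}$ is then transitive on $\Ga(\a)$.

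Next I would split according to whether $G_\a^{\Ga(\a)}$ is almost simple or affine, the two possibilities for a $2$-transitive group. If $G_\a^{\Ga(\a)}$ is almost simple with socle $S$, then the nontrivial normal subgroup $L_\a^{\Ga(\a)}$ contains $S$ and hence is itself almost simple with socle $S$; provided $S\not\cong\PSL_2(8)$, Lemma~\ref{(L,2)-Trans}(a) applies directly and yields that $\Ga$ is $(L,2)$-arc-transitive. If instead $G_\a^{\Ga(\a)}$ is affine, say $G_\a^{\Ga(\a)}=V{:}G_0$ with $V$ the regular elementary abelian socle and $G_0\le\GL(V)$ transitive on $V\setminus\{0\}$, then $L_\a^{\Ga(\a)}\cap V$ is either $1$ or $V$ by the irreducibility of $V$; the former would force $L_\a^{\Ga(\a)}$ to centralise $V$ and hence be abelian, contradicting insolubility, so $V\le L_\a^{\Ga(\a)}$. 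Writing $L_\a^{\Ga(\a)}=V{:}N_0$ with $N_0\trianglelefteq G_0$ insoluble, the classification of transitive linear groups shows that $G_0$ has a unique component, which is transitive on $V\setminus\{0\}$ and lies in $N_0$; hence $N_0$ is transitive on $V\setminus\{0\}$ and $L_\a^{\Ga(\a)}$ is $2$-transitive, as required.

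The remaining obstacle is the configuration excluded above, in which $G_\a^{\Ga(\a)}$ is almost simple with socle $\PSL_2(8)$, since $\PSL_2(8)$ fails to be $2$-transitive exactly in its degree-$28$ action inside $\Ree(3)=\PGaL_2(8)$, where $L_\a^{\Ga(\a)}$ could a priori be the non-$2$-transitive subgroup $\PSL_2(8)$. To rule this out I would use that $L$ is transitive, so $G=LG_\a$ and therefore $G_\a/L_\a\cong G/L\le\Out(\Soc(G))$; since $G_\a^{\Ga(\a)}/L_\a^{\Ga(\a)}$ is a quotient of $G_\a/L_\a$, the offending case would force $3\mid|\Out(\Soc(G))|$ together with valency $28$. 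For $\Soc(G)$ alternating this is impossible, as $|\Out(\A_m)|\le 4$ is coprime to $3$, and for the finitely many socles in Table~\ref{Sporadic} the valency-$28$ constraint can be checked directly. This last step---verifying that the $\PSL_2(8)$ exception never materialises---is the delicate point of the argument; everything else is routine once the reduction to the $2$-transitivity of $L_\a^{\Ga(\a)}$ is in place.
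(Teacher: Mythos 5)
Your reduction to the $2$-transitivity of $L_\a^{\Ga(\a)}$ and your treatment of the almost simple local case (including the isolation of the $\PSL_2(8)$ exception via Lemma~\ref{(L,2)-Trans}) are sound, but the affine case contains a genuine gap, and it is exactly where the real difficulty of this lemma lives. You assert that the classification of transitive linear groups shows the unique component of $G_0$ is transitive on $V\setminus\{0\}$. This is false for the exceptional classes in Hering's theorem: for $p^d\in\{19^2,29^2,59^2\}$ there are transitive linear groups of the form $Z\circ\SL_2(5)\le\GL_2(p)$ whose unique component $\SL_2(5)$ has order $120<p^2-1$ and is therefore intransitive on the nonzero vectors. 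Concretely, $V{:}\SL_2(5)$ is an insoluble normal subgroup of the $2$-transitive group $19^2{:}(9\times\SL_2(5))$ that is not even transitive on $V\setminus\{0\}$, so nothing in your argument prevents $L_\a^{\Ga(\a)}$ from being such a subgroup. These configurations are not a technicality to be waved away: the paper's proof invokes~\cite[Corollary~1.2]{LSS15}, which says that if $L_\a$ is nonsolvable and $L_\a^{\Ga(\a)}$ fails to be $2$-transitive then $G_\a^{\Ga(\a)}$ is one of $\PGaL_2(8)$, $19^2{:}(9\times\SL_2(5))$, $29^2{:}(7\times\SL_2(5))$, $29^2{:}(28\circ\SL_2(5))$, $59^2{:}(29\times\SL_2(5))$ --- i.e.\ precisely the affine groups your argument fails to handle, plus the $\PSL_2(8)$ case you did flag.

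To close the gap you need the additional arithmetic input the paper uses: since $L$ is transitive, $|G_\a/L_\a|=|G/L|$, which divides $|\Out(L)|$ and hence lies in $\{1,2,4\}$ when $L$ is alternating; consequently $|G_\a^{\Ga(\a)}:L_\a^{\Ga(\a)}|\in\{2,4\}$ once $L_\a^{\Ga(\a)}\ne G_\a^{\Ga(\a)}$, and one checks that the only normal subgroup of index $2$ or $4$ in the five listed groups that could arise, namely $29^2{:}(7\times\SL_2(5))$ inside $29^2{:}(28\circ\SL_2(5))$, is itself $2$-transitive (here $7\times\SL_2(5)$ has order $840=29^2-1$ and is regular on the nonzero vectors). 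For nonalternating $L$ one instead inspects Table~\ref{Sporadic} for candidates $L_\a$ surjecting onto a normal subgroup of one of the five groups; only $L=\Sp_6(2)$ with $L_\a=\PSL_2(8)$ or $\PSL_2(8){:}3$ survives, and $\Out(\Sp_6(2))=1$ forces $G=L$, a contradiction. Your $\PSL_2(8)$ discussion gestures at this kind of $\Out$-divisibility argument, but note that several socles in Table~\ref{Sporadic} do have outer automorphism group of order divisible by $3$, so the check there must go through the explicit list of stabilizers $L_\a$ rather than through $|\Out(\Soc(G))|$ alone.
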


\begin{proof}
Since $G$ is quasiprimitive on $V(\Ga)$, the normal subgroup $L$ is transitive on $V(\Ga)$.
Suppose for a contradiction that $\Ga$ is not $(L,2)$-arc-transitive.
Then $L_\a^{\Ga(\a)}$ is not $2$-transitive, and $L<G$.
As $\Ga$ is $(G,2)$-arc-transitive and $L_\a$ is nonsolvable, \cite[Corollary~1.2]{LSS15} asserts that $G_{\a}^{\Ga(\a)}$ is one of the groups:
\begin{equation}\label{Eqn2}
\PGaL_2(8),\ 19^2{:}(9\times\SL_2(5)),\ 29^2{:}(7\times\SL_2(5)),\ 29^2{:}(28\circ\SL_2(5)),\ 59^2{:}(29\times\SL_2(5)).
\end{equation}
Since $L$ is normal in $G$, we see that $L_{\a}^{\Ga(\a)}$ is normal in $G_{\a}^{\Ga(\a)}$.

Suppose that $L$ is not an alternating group. Then $(L, n, L_\a)$ lies in Table~\ref{Sporadic}. As $L_\a$ has a homomorphic image $L_{\a}^{\Ga(\a)}$ that is a normal subgroup of one of the groups in~\eqref{Eqn2}, inspecting the candidates for $L_\a$ in Table~\ref{Sporadic} shows that $L=\Sp_6(2)$ and $L_\a=\PSL_2(8)$ or $\PSL_2(8){:}3$. However, in this case we have $\Out(L)=1$, which implies that $G=L$, a contradiction.

Thus $L$ is an alternating group.
Since $L$ is transitive on $V(\Ga)$, we have $|G|/|G_\a|=|V(\Ga)|=|L|/|L_\a|$ and hence $|G_\a/L_\a|=|G/L|\in\{2,4\}$.
As a consequence, $|G_\a^{\Ga(\a)}/L_\a^{\Ga(\a)}|\in\{1,2,4\}$.
Moreover, since $L_\a^{\Ga(\a)}$ is not $2$-transitive, we have $L_\a^{\Ga(\a)}\ne G_\a^{\Ga(\a)}$.
Therefore, $|G_\a^{\Ga(\a)}/L_\a^{\Ga(\a)}|=2$ or $4$.
Now $G_{\a}^{\Ga(\a)}$ is one of the above five groups with a normal subgroup $L_\a^{\Ga(\a)}$ of index $2$ or $4$.
The only possibility is that $G_\a^{\Ga(\a)}=29^2{:}(28\circ\SL_2(5))$ and $L_\a^{\Ga(\a)}=29^2{:}(7\times\SL_2(5))$.
However, this implies that $L_\a^{\Ga(\a)}$ is $2$-transitive, a contradiction.
%
\end{proof}


\subsection{Case~(a.1)}


\begin{lemma}\label{AffineFaithful}
Let $L=\A_{n+k}$ with $2\le k\le5$. If $L_\a$ is an affine $k$-transitive subgroup of $L$, then the action of $G_\a$ on $\Ga(\a)$ is faithful.
\end{lemma}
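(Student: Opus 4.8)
The plan is to argue by contradiction, with the affine structure of $L_\a$ doing the work. As $L_\a$ is affine and, by the index count $|L_\a|=|L|/|V(\Ga)|=(n+k)!/n!$, necessarily sharply $k$-transitive on $n+k$ points, its socle is an elementary abelian group $E$ of order $p^d=n+k$, regular in the degree-$(n+k)$ action and the unique minimal normal subgroup of $L_\a$; hence $E$ is characteristic in $L_\a$, and since $L_\a\trianglelefteq G_\a$ we obtain $E\trianglelefteq G_\a$. Write $K=G_\a^{[1]}$; the target is $K=1$. Two facts drive the reduction: $K\cap H=1$, because $H$ is regular and so $H_\a=1$; and $K\cap L_\a\trianglelefteq L_\a$, so that $K\cap L_\a\neq1$ would force $E\le K$ by minimality of $E$. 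Note also that $\val(\Ga)\ge3$ (an $\A_n$ with $n\ge5$ cannot be regular on a cycle), so the $2$-transitive group $G_\a^{\Ga(\a)}$ has degree at least $3$; consequently its nontrivial normal subgroup $L_\a^{\Ga(\a)}$ is transitive, and indeed $L_\a^{\Ga(\a)}\neq1$ since otherwise $G_\a^{\Ga(\a)}$ would be a quotient of $G_\a/L_\a$ of order at most $2$. The proof then splits according to whether $K\cap L_\a=1$ or $E\le K$.

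In the first case $L_\a^{\Ga(\a)}\cong L_\a$ is $2$-transitive, so the restriction $K\hookrightarrow G_\a/L_\a\cong G/L$, together with $n+k\ge7$ (so $\Out(\A_{n+k})\cong\ZZ_2$), gives $|K|\le2$. To kill $|K|=2$ I would combine Weiss's theorem (Theorem~\ref{Weiss}) with the observation that $L_\a^{\Ga(\a)}\cong L_\a$ is a non-almost-simple normal subgroup of $G_\a^{\Ga(\a)}$: Weiss case~(b) would force the nonabelian simple $\PSL_{d'}(p^f)\trianglelefteq G_\a^{\Ga(\a)}$ to lie inside the affine group $L_\a$, contradicting the uniqueness of the minimal normal subgroup $E$; and in Weiss case~(a) the involution $t$ generating $K$ satisfies $[L_\a,t]\le L_\a\cap K=1$ and fixes $\Ga(\a)$ pointwise, which a short argument on the centralizer of the primitive group $L_\a$ shows is untenable. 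Hence $K=1$ in this case.

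The substantial case, and the main obstacle, is $E\le K$: here $E$ fixes $\a$ and every neighbour of $\a$ pointwise, and $L_\a^{\Ga(\a)}=L_\a/(K\cap L_\a)$ is a proper quotient of $L_\a$ realized as the regular elementary abelian socle of an affine $2$-transitive $G_\a^{\Ga(\a)}$. Weiss case~(b) is excluded as above, but the affine alternative survives the purely local theory, so a global argument is needed. I would exploit that for $k=2$ the group $E$ is the unique (normal) Sylow $p$-subgroup of $L_\a$, whence $E\le L_\b$ forces $L_\b\le\Nor_L(E)=\AGL_d(p)\cap\A_{n+k}$ for every $\b\in\Ga(\a)$; writing $\b=\a^g$ this gives $E^g=E$, so $\Ga(\a)\subseteq\a^{\,\Nor_L(E)}$, confining the whole neighbourhood to a single $\Nor_L(E)$-orbit. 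Playing this containment against the connectivity of $\Ga$ and the quasiprimitivity of $G$, via $\l G_\a,\Nor_G(G_{\a\b})\r=G$ from Lemma~\ref{Sab64}, should yield the contradiction; the case $k=3$ (forcing $p=2$, a short list) is handled similarly, while $k\ge4$ is vacuous since there are no affine $k$-transitive groups. The delicate point throughout is precisely this case $E\le K$, where the affine possibility for $G_\a^{\Ga(\a)}$ is genuinely consistent with the local theory and must be defeated using the regular subgroup $H$ and the global permutation structure.
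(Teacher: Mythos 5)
Your reduction to the case $\Soc(G_\a)=E\le G_\a^{[1]}$ is essentially the paper's opening move (though more simply: $G_\a$ itself is affine $k$-transitive, hence primitive, so any nontrivial $G_\a^{[1]}$ contains its socle, making your first case vacuous rather than needing a separate Weiss argument). But from there the proposal has two genuine gaps. First, you import hypotheses that are not available in this lemma: it sits inside the proof of Proposition~\ref{Quasi-case}, where $H=\A_n$ is only assumed \emph{vertex-transitive}, not regular, so the index count $|L_\a|=(n+k)!/n!$ and the ensuing claims that $L_\a$ is sharply $k$-transitive and that $E$ is the unique Sylow $p$-subgroup of $L_\a$ are unjustified (e.g.\ $L_\a=\AGL_d(2)$ is a live candidate, and there $E$ is far from Sylow). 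This breaks the step "$E\le L_\b$ forces $L_\b\le\Nor_L(E)$", which is the engine of your global argument.

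Second, and more seriously, your claim that "Weiss case~(b) is excluded as above" in the main case is false: the exclusion you give only works when $L_\a^{\Ga(\a)}\cong L_\a$, whereas once $E\le G_\a^{[1]}$ the group $G_\a^{\Ga(\a)}$ is a quotient of $G_\a/E\le\GaL_m(p)$ and can perfectly well have socle $\PSL_e(p^f)$ (think of $\AGL_d(2)/E\cong\PSL_d(2)$). The paper shows that Weiss case~(a) is eliminated by an arithmetic constraint ($p^{2m}$ must divide $|G_\a|$, which fails for the relevant affine $2$-transitive groups after a short check), while Weiss case~(b) forces $r=p$ and $\ASL_e(p^f)\trianglelefteq G_\a\le\AGaL_e(p^f)$ --- and it is precisely this configuration that consumes the bulk of the proof, requiring \magma\ for $(e,p^f)\in\{(2,3),(2,4),(3,2),(3,3)\}$ and otherwise a delicate computation with $g\in\Nor_G(G_{\a\b})$, the subgroups $N=\Soc(G_\a)$ and $N^g$ (one shows $NN^g=NP$ for the unipotent radical $P$ of $G_{\a\b}$), and explicit affine maps $t_{h,w}$, split according to whether $(G_{\a\b})_0$ stabilizes a hyperplane or a line. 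Your writeup correctly senses that a global argument is needed here, but "should yield the contradiction" is exactly the missing content; as it stands the hard case is not proved.
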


\begin{proof}
Suppose for a contradiction that $L_\a$ is an affine $k$-transitive subgroup of $L$ while $G_\a$ is not faithful on $\Ga(\a)$. Then $G_\a$ is an affine $k$-transitive subgroup of $G$ with $\Soc(G_\a)=\C_p^m$ for some prime $p$ and integer $m$, and $G_\a^{[1]}\trianglerighteq\Soc(G_\a)$. Since $G_\a^{\Ga(\a)}\cong G_\a/G_\a^{[1]}$, it follows that $G_\a^{\Ga(\a)}$ is a quotient of $G_\a/\Soc(G_\a)$.

Suppose $G_{\a\b}^{[1]}=1$. Then by Theorem~\ref{Weiss}(a), the group $G_\a^{[1]}$ is isomorphic to a normal subgroup of $G_{\a\b}^{\Ga(\a)}$. In particular, $|G_\a^{[1]}|$ divides $|G_{\a\b}^{\Ga(\a)}|$ and thus divides $|G_\a^{\Ga(\a)}|$, which implies that $p^{2m}$ divides $|G_\a^{[1]}.G_\a^{\Ga(\a)}|=|G_\a|$. Checking the candidates (for example, in~\cite[Table~7.3]{Cameron}) of affine $2$-transitive groups $G_\a$ for the condition that $|\Soc(G_\a)|^2=p^{2m}$ divides $|G_\a|$, we obtain the following possibilities:
\begin{enumerate}[{\rm (i)}]
\item $G_\a/\Soc(G_\a)\trianglerighteq\SL_d(q)$ with $d\ge2$ and $q^d=p^m$;
\item $G_\a/\Soc(G_\a)\trianglerighteq\Sp_{2d}(q)$ with $d\ge2$ and $q^{2d}=p^m$;
\item $G_\a/\Soc(G_\a)\trianglerighteq\G_2(q)$ with $q^6=p^m$.
\end{enumerate}
Notice that $G_\a^{\Ga(\a)}$ is $2$-transitive and is a quotient group of $G_\a/\Soc(G_\a)$.
We see that (iii) is not possible because there is no 2-transitive permutation group with socle $\G_2(q)$.
Moreover, if (i) occurs, then $G_{\a}^{\Ga(\a)}$ is almost simple with socle $\PSL_d(q)$, and the largest normal $p$-subgroup of $G_{\a\b}^{\Ga(\a)}$ is of order $q^{d-1}$, contradicting the condition that $G_\a^{[1]}$ is isomorphic to a normal subgroup of $G_{\a\b}^{\Ga(\a)}$ with $G_\a^{[1]}\trianglerighteq\Soc(G_\a)=\C_p^m$. Now assume (ii) occurs. Then
the 2-transitivity of $G_{\a}^{\Ga(\a)}$ implies that it is almost simple with socle $\Sp_{2d}(q)$, where $q=2$, and $G_{\a\b}^{\Ga(\a)}$ is almost simple with socle $\POm_{2d}^+(2)$ or $\POm_{2d}^-(2)$. This also contradicts the condition that $G_\a^{[1]}$ is isomorphic to a normal subgroup of $G_{\a\b}^{\Ga(\a)}$ with $G_\a^{[1]}\trianglerighteq\Soc(G_\a)=\C_p^m$.

Therefore, $G_{\a\b}^{[1]}\neq1$. By Theorem~\ref{Weiss}, there exist prime $r$ and integers $e\ge2$ and $f\ge1$ such that $G_{\a\b}^{[1]}$ is a nontrivial $r$-group, $G_\a^{\Ga(\a)}\trianglerighteq\PSL_e(r^f)$ and $\val(\Ga)=(r^{ef}-1)/(r^f-1)$.
If $r\neq p$, then
\[
\C_p^m=\Soc(G_\a)\cong\Soc(G_\a)G_{\a\b}^{[1]}/G_{\a\b}^{[1]}\le G_\a^{[1]}/G_{\a\b}^{[1]}=(G_\a^{[1]})^{\Ga(\b)}\trianglelefteq(G_\b)^{\Ga(\b)}\cong(G_\a)^{\Ga(\a)},
\]
and so $p^{2m}$ divides $|G_\a^{[1]}.G_\a^{\Ga(\a)}|=|G_\a|$, which is not possible by the same argument as in the previous paragraph. Hence $r=p$, and so $\Soc(G_\a^{\Ga(\a)})=\PSL_e(p^f)$. Then since $G_\a^{\Ga(\a)}$ is a quotient of $G_\a/\Soc(G_\a)$, the only possibility for the affine $2$-transitive group $G_\a$ is
\begin{equation}\label{Eqn3}
\ASL_e(p^f)\trianglelefteq G_\a\le\AGaL_e(p^f)\ \text{ with }\ ef=m.
\end{equation}

Suppose $(e,p^f)\in\{(2,3),(2,4),(3,2),(3,3)\}$. Then viewing~\eqref{Eqn3} and $G=G_\a\A_n$ we have
\begin{align*}
(G,G_\a)\in\{&(\A_9,\ASL_2(3)),\,(\Sy_9,\AGL_2(3)),\,(\A_{16},\ASL_2(4)),\\
&(\A_{16},\AGL_2(4)),\,(\A_{16},\ASiL_2(4)),\,(\A_{16},\AGaL_2(4)),\\
&(\A_8,\ASL_3(2)),\,(\A_{27},\ASL_3(3)),\,(\Sy_{27},\AGL_3(3))\}.
\end{align*}
Moreover, $G_{\a\b}$ is a maximal subgroup of $G_\a$ such that the action of $G_\a$ on $[G_\a:G_{\a\b}]$ is $2$-transitive. However, for all the candidates computation in \magma~\cite{Magma} shows that $\l\Nor_G(G_{\a\b}),G_\a\r<G$, contradicting the connectivity of $\Ga$.

Thus $(e,p^f)\notin\{(2,3),(2,4),(3,2),(3,3)\}$. Let $V$ be an $e$-dimensional vector space over $\bbF_{p^f}$ such that $\Alt(V)\le G\le\Sym(V)$ and $\ASL(V)\trianglelefteq G_\a\le\AGaL(V)$, and let $N=\Soc(G_\a)$. Since $G_{\a\b}\ge G_{\a}^{[1]}\ge N$, we have $G_{\a\b}=N{:}(G_{\a\b})_0$. Moreover, $(G_{\a\b})_0$ is the stabilizer in $(G_\a)_0$ of a $1$-dimensional subspace or a $(e-1)$-dimensional subspace of $V$, as the action of $G_\a$ on $[G_\a:G_{\a\b}]$ is $2$-transitive. Hence $(G_{\a\b})_0=(P{:}Q).\mathcal{O}$ with $P=\C_p^{(e-1)f}$, $Q=\GL_{e-1}(p^f)$ and $\mathcal{O}=(G_\a)_0/\SL(V)\le\GaL(V)/\SL(V)$.

By Lemma~\ref{Sab64}, there exits $g\in\Nor_G(G_{\a\b})$ such that $\Ga\cong\Cos(G,G_\a,g)$ and $\l G_a,g\r=G$. Since $N\trianglelefteq G_{\a\b}$ and $g\in\Nor_G(G_{\a\b})$, we have $N^g\trianglelefteq G_{\a\b}$. Then $NN^g\leqslant NP$, as $NP$ is the largest normal $p$-subgroup of $G_{\a\b}$. If $N^g=N$, then $N$ is normal in $\l G_\a,g\r=G$, contradicting that $G$ is $\Alt(V)$ or $\Sym(V)$. Hence $N^g\neq N$, and so $NN^g/N$ is a nontrivial normal subgroup of $G_{\a\b}/N\cong(P{:}Q).\mathcal{O}$. Since $P$ is a minimal normal subgroup of $(P{:}Q).\mathcal{O}$, it follows that $NN^g/N\cong P$, which yields $NN^g=NP$. For $h\in\GL(V)$ and $w\in V$, define $t_{h,w}\in\AGL(V)$ by letting
\[
t_{h,w}\colon v\mapsto v^h+w\quad\text{for }\,v\in V,
\]
and identify $t_{h,0}$ with $h$. For each $h\in P$, since $t_{h,0}\in NP=NN^g$, there exists $x\in N$ such that $t_{h,0}x\in N^g$, and so there exists $w\in V$ such that $t_{h,w}=t_{h,0}t_{1,w}\in N^g$.

\underline{Case~1}: $(G_{\a\b})_0$ is the stabilizer in $(G_{\a})_0$ of a $(e-1)$-dimensional subspace of $V$. In this case, $PQ=\SL(V)_U$ for some $(e-1)$-dimensional subspace $U$ of $V$. Let $v_1,v_2,\dots,v_e$ be a basis of $V$ such that $U=\l v_2,\dots,v_e\r$. Since $|N|=|N^g|=(p^f)^e$ and $|NN^g|=|NP|=(p^f)^{2e-1}$, we have $|N\cap N^g|=p^f$. Moreover, since both $N$ and $N^g$ are normal in $G_{\a\b}$, their intersection $N\cap N^g$ is normalized by $PQ$. Then since the orbits of $PQ=\SL(V)_U$ on $V\setminus\{0\}$ are $U\setminus\{0\}$ and $V\setminus U$, we conclude that $N\cap N^g=\{t_{1,u}\mid u\in U\}$ with $e=2$. From $p^{ef}=p^m=n+k\ge n+2\ge7$ and $(e,p^f)\notin\{(2,3),(2,4)\}$ we deduce that $p^f\ge5$. Hence there exists $c\in\bbF_{p^f}^\times$ with $c\neq c^{-2}$. Let $h,\ell\in\SL(V)$ defined by
\[
v_1^h=v_1+v_2,\quad v_2^h=v_2,\quad v_1^\ell=cv_1,\quad v_2^\ell=c^{-1}v_2.
\]
Then $h\in P$ and $\ell\in Q$. As shown above, there exists $w\in V$ such that $t_{h,w}\in N^g$. Write $w=av_1+bv_2$ with $a,b\in\bbF_{p^f}$. Since $N^g$ is normalized by $Q$, we have $t_{\ell,0}^{-1}t_{h,w}t_{\ell,0}\in N^g$. Then since $N^g$ is abelian, we obtain $t_{\ell,0}^{-1}t_{h,w}t_{\ell,0}t_{h,w}=t_{h,w}t_{\ell,0}^{-1}t_{h,w}t_{\ell,0}$, which implies that
\[
(ac+a)v_1+(ac+bc^{-1}+b)v_2=0^{t_{\ell,0}^{-1}t_{h,w}t_{\ell,0}t_{h,w}}=0^{t_{h,w}t_{\ell,0}^{-1}t_{h,w}t_{\ell,0}}=(ac+a)v_1+(ac^{-2}+bc^{-1}+b)v_2
\]
by direct calculation. Since $c\neq c^{-2}$, this yields $a=0$, that is, $w\in U$. Consequently, $t_{1,w}\in\{t_{1,u}\mid u\in U\}=N\cap N^g$, and so $t_{h,0}=t_{h,w}t_{1,w}^{-1}\in N^g$, contradicting the fact that $N^g$ is regular on $V$.

\underline{Case~2}: $(G_{\a\b})_0$ is the stabilizer in $(G_\a)_0$ of a $1$-dimensional subspace of $V$. In this case, $PQ=\SL(V)_{\l v_1\r}$ for some $v_1\in V$. Extend $v_1$ to a basis $v_1,v_2,\dots,v_e$ of $V$. Then the linear transformation $h$ defined by
\[
v_2^h=v_1+v_2\quad\text{and}\quad v_i^h=v_i\ \text{ for }\ i\in\{1,3,4,\dots,e\}
\]
lies in $P$, and so there exists $w\in V$ such that $t_{h,w}\in N^g$. Then for any $\ell\in(PQ)_w$, since
\[
t_{h\ell h^{-1}\ell^{-1},0}=t_{h,w}t_{\ell h\ell^{-1},w}^{-1}=t_{h,w}(t_{\ell,0}t_{h,w}^{-1}t_{\ell,0}^{-1})\in N^g
\]
and $N^g$ acts regularly on $V$, we obtain $h\ell h^{-1}\ell^{-1}=1$, that is, $\ell\in\Cen_{PQ}(h)$. Hence
\begin{equation}\label{Eqn5}
(PQ)_w\le\Cen_{PQ}(h).
\end{equation}
If there exists $z\in\Fix(h)\setminus\l v_1,v_2,w\r$, then the linear transformation $\ell$ such that $v_1^\ell=v_1$, $w^\ell=w$ and $z^\ell=v_2+z$ satisfies $\ell\in\SL(V)_{\l v_1\r}=PQ$, $w^\ell=w$ and $z^{h\ell}=v_2+z\ne v_1+v_2+z=z^{\ell h}$, contradicting~\eqref{Eqn5}. Thus $\Fix(h)\le\l v_1,v_2,w\r$. As $v_2\notin\Fix(h)$, we obtain
\begin{equation}\label{Eqn6}
\Fix(h)<\l v_1,v_2,w\r.
\end{equation}
In particular, $e-1=\dim(\Fix(h))<3$, which means $e<4$. The possibility of $e=2$ has already been ruled out in Case~1. Therefore, $e=3$. Then it follows from $(e,p^f)\notin\{(3,2),(3,3)\}$ that $p^f\ge4$. Hence there exists $a\in\bbF_{p^f}^\times$ with $a\neq a^{-1}$.
Since $\Fix(h)=\l v_1,v_3\r$, we derive from~\eqref{Eqn6} that $w\notin\l v_1\r$ and $v_2\notin\l v_1,w\r$. Let $\ell$ be the transformation defined by $v_1^k=av_1$, $w^k=w$ and $v_2^k=a^{-1}v_2$. Then $\ell\in\SL(V)_{\l v_1\r}=PQ$, $w^\ell=w$, and $v_2^{h\ell}=av_1+a^{-1}v_2\neq a^{-1}v_1+a^{-1}v_2=v_2^{\ell h}$. This contradicts~\eqref{Eqn5}.
\end{proof}

\begin{lemma}\label{k>2}
Let $L=\A_{n+k}$ with $1\le k\le5$. If $L_\a$ is $k$-transitive on $n+k$ points, then $k\le 2$.
\end{lemma}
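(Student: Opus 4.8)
The plan is to rule out $k\in\{3,4,5\}$ by showing that in each case the local action $G_\a^{\Ga(\a)}$ cannot be a $2$-transitive group compatible with the structure forced by Weiss's theorem. The key structural fact is that $L_\a$ is a $k$-transitive subgroup of $L=\A_{n+k}$ acting on $n+k$ points, and since $L$ is almost simple with socle an alternating group, the only $k$-transitive groups available for $L_\a$ with $k\ge 3$ are highly restricted: by the classification of $2$-transitive (hence $k$-transitive) groups, a $3$-transitive group is either a symmetric or alternating group, an affine group of a very special form ($\AGL_d(2)$ or the sharply $3$-transitive groups $\PGL_2(q)$, $\M_{11}$, $\M_{12}$, etc.), or one of the Mathieu groups. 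Since $L_\a$ is a proper subgroup of $L=\A_{n+k}$ acting on $n+k$ points, the alternating and symmetric possibilities are excluded by order, leaving only finitely many sporadic candidates (the Mathieu groups $\M_{11},\M_{12},\M_{22},\M_{23},\M_{24}$ and their associated degrees) together with $\AGL_d(2)$-type affine groups.

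First I would invoke Lemma~\ref{(L,2)-ArcT} to reduce to studying the local action: since a $k$-transitive $L_\a$ with $k\ge 3$ is certainly nonsolvable, $\Ga$ is $(L,2)$-arc-transitive, so I may replace $G$ by $L=\A_{n+k}$ and work with $L_\a^{\Ga(\a)}$, which must itself be $2$-transitive. Then I would split into the affine and almost simple cases for $L_\a$. In the affine case, where $L_\a$ is $k$-transitive with $k\ge 3$, the socle is elementary abelian $\C_p^m$ and Lemma~\ref{AffineFaithful} tells me the action of $G_\a$ on $\Ga(\a)$ is faithful, so $L_\a\cong L_\a^{\Ga(\a)}$; but a faithful $2$-transitive action of an affine $k$-transitive group ($k\ge3$) forces $L_\a\le\AGL_d(2)$, and I would then try to build the coset graph via Lemma~\ref{Sab64} and derive a connectivity contradiction $\l\Nor_G(G_{\a\b}),G_\a\r<G$, analogous to the computation already carried out in Lemma~\ref{AffineFaithful} for the small exceptional pairs. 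In the almost simple case, $L_\a$ is one of finitely many $3$-transitive Mathieu or related groups, each occurring in only finitely many degrees $n+k$, so I would enumerate these and check each against the arithmetic constraint coming from Theorem~\ref{Weiss}, namely that $G_{\a\b}^{[1]}$ being either trivial (forcing $G_\a^{[1]}$ to embed as a normal subgroup of $G_{\a\b}^{\Ga(\a)}$) or a nontrivial $p$-group (forcing $\Soc(G_\a^{\Ga(\a)})\cong\PSL_e(p^f)$ with prescribed valency).

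I expect the main obstacle to be the almost simple Mathieu case: although there are only finitely many candidate pairs, each $L_\a$ being a Mathieu group $\M_{v}$ acting $3$-transitively inside $L=\A_{n+k}$, I must verify for every one that no compatible self-paired suborbit yields a connected $2$-arc-transitive orbital graph. The cleanest route is to observe that the local group $L_\a^{\Ga(\a)}$ must be $2$-transitive of degree equal to $\val(\Ga)$, and to combine this with the Weiss dichotomy to pin down $G_{\a\b}^{\Ga(\a)}$ precisely; the incompatibility should then surface either as a failure of the divisibility condition $|G_\a^{[1]}|\bigm|\,|G_{\a\b}^{\Ga(\a)}|$ or, for the remaining handful of pairs, as a direct \magma~\cite{Magma} verification that $\l\Nor_G(G_{\a\b}),G_\a\r$ is proper, contradicting connectivity. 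The affine subcase should be more uniform, since the faithfulness from Lemma~\ref{AffineFaithful} collapses it back to the already-handled $\ASL_e(p^f)$ analysis, so the genuine work is isolating and eliminating the finitely many $3$-transitive simple candidates.
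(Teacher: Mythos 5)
Your overall strategy tracks the paper's up to a point: reduce to $(L,2)$-arc-transitivity via Lemma~\ref{(L,2)-ArcT}, classify the $3$-transitive candidates for $L_\a$, and eliminate them. Your affine branch is in the right spirit, but note that for $L_\a=\AGL_d(2)$ with $d$ arbitrary you cannot fall back on a case-by-case computation ``analogous to the small exceptional pairs''; the paper's argument there is uniform: faithfulness (Lemma~\ref{AffineFaithful}) forces $L_{\a\b}$ to be conjugate to $L_{\a1}=\GL_d(2)$, and since $\PSL_d(2)$ is the unique $2$-transitive group with socle $\PSL_d(2)$, Lemma~\ref{Normalizer} gives $\Nor_L(L_{\a1})=L_{\a1}$, whence $\l L_\a,\Nor_L(L_{\a\b})\r=L_\a<L$, contradicting Lemma~\ref{Sab64}.

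The genuine gap is in your almost simple branch. You describe the non-affine $3$-transitive candidates as finitely many Mathieu or related groups, ``each occurring in only finitely many degrees,'' to be disposed of by the Weiss divisibility condition or a direct \magma\ check. But the $3$-transitive groups with socle $\PSL_2(q)$ acting on the projective line of degree $q+1$ form an \emph{infinite} family, one for each prime power $q$, and they do arise here (Row~8 of Table~\ref{3-Trans}), since $\A_{q+1}=\A_n L_\a$ admits such factorizations. Neither of your proposed mechanisms touches this family: the local action of $L_\a$ on $[L_\a:L_{\a\b}]$ is (generically) just its natural $2$-transitive action of degree $q+1$, which is perfectly consistent with Theorem~\ref{Weiss}, and an infinite family cannot be exhausted in \magma. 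The paper's missing idea is this: apart from two small exceptions, $L_{\a\b}$ must be conjugate to the point stabilizer $L_{\a1}$, and then Lemma~\ref{Normalizer} combined with the fact that every $2$-transitive overgroup of $L_{\a1}$ on the remaining $q$ points lies inside $\AGaL_1(q)$ shows that $\Nor_L(L_{\a\b})$ is contained in a conjugate of $\PGaL_2(q)$. Hence $\l L_\a,\Nor_L(L_{\a\b})\r\le\PGaL_2(q)<\A_{q+1}$, contradicting the connectivity criterion of Lemma~\ref{Sab64}. Without an argument of this kind your proof does not close.
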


\begin{proof}
Let $\Ome=\{1,\dots,n+k\}$ be the set that $L=\A_{n+k}$ naturally acts on.
Suppose for a contradiction that $k\ge3$.
Then $L_\a$ is a $3$-transitive subgroup of $\A_{n+k}$, and so by~\cite[Table~7.4]{Cameron}, the triple $(L_\a,L_{\a1},n+k)$ lies in Table~\ref{3-Trans},
where $L_{\a1}$ is the stabilizer of the point $1\in\Ome$ in $L_{\a}$.
In particular, $L_\a$ is nonsolvable.

\begin{table}[ht]
\[
\begin{array}{lllll}
\hline
\text{Row}& L_\a & L_{\a1} & n+k &\text{Remarks}\\
\hline
1&\AGL_d(2)& \GL_d(2) & 2^d & d\ge3\\
2&\M_{11}& \M_{10} & 11 & \text{sharply $4$-transitive}\\
3&\M_{11}& \PSL_2(11) & 12 & \\
4&\M_{12}& \M_{11} & 12 & \text{sharply $5$-transitive}\\
5&\M_{22}& \PSL_3(4) & 22 & \\
6&\M_{23}& \M_{22} & 23 & \text{$4$-transitive}\\
7&\M_{24}& \M_{23} & 24 & \text{$5$-transitive}\\
8&\PSL_2(p^f).\calO & \Big(p^f{:}\frac{p^f-1}{(2,p-1)}\Big).\calO & p^f+1 & p\text{ prime, }(2,p-1)\le\calO\le(2,p-1)\times f,\\
&&&& \calO\text{ is not Frobenius if }p>2 \\
\hline
\end{array}
\]
\caption{$3$-transitive permutation groups on $n+k$ points}\label{3-Trans}
\end{table}

Since $L_\a$ is $2$-transitive on $\Ome$, it follows from Lemma~\ref{Normalizer} that
\begin{equation}\label{Eqn1}
\Nor_L(L_{\a1})=\Nor_{L_1}(L_{\a1}),
\end{equation}
where $L_1$ is the stabilizer of $1\in\Ome$ in $L$.
Take $\b\in\Ga(\a)$.
As $L_\a$ is nonsolvable, Lemma~\ref{(L,2)-ArcT} asserts that $\Ga$ is $(L,2)$-arc-transitive.
Hence $L_\a$ is $2$-transitive on $\Ga(\a)$, which can be identified with $[L_\a:L_{\a\b}]$.
In particular, $L$ has a suborbit of length $|L_\a|/|L_{\a\b}|=\val(\Ga)$ on $[L:L_\a]$.
Let $M=\Nor_L(L_{\a\b})$.
Then by Lemma~\ref{Sab64} we have $\l L_\a,M\r=L$.
In the following we consider each row of Table~\ref{3-Trans} separately.

\vskip0.1in
\noindent{\underline{Row 1.}}{\hspace{5pt}}
For this row we have $L=\A_{2^d}$, $L_\a=\AGL_d(2)$ and $L_{\a1}=\GL_d(2)\cong\PSL_d(2)$, where $d\ge3$.
Then Lemma~\ref{AffineFaithful} asserts that $L_\a$ acts faithfully on $\Ga(\a)$.
Since $L_\a=\AGL_d(2)$ has a unique faithful $2$-transitive permutation presentation,
it follows that $L_{\a\b}$ is conjugate to $L_{\a1}$ in $L_\a$, that is, $L_{\a\b}=(L_{\a1})^x$ for some $x\in L_\a$.
Since $L_{\a1}\cong\PSL_d(2)$ is $2$-transitive on $\Ome\setminus\{1\}$
and $\PSL_d(2)$ is the only $2$-transitive permutation group with socle $\PSL_d(2)$, we have $\Nor_{L_1}(L_{\a1})=L_{\a1}$.
This together with~\eqref{Eqn1} leads to $\Nor_L(L_{\a1})=L_{\a1}$. Hence
\[
M=\Nor_L(L_{\a\b})=\Nor_L((L_{\a1})^x)=(\Nor_L(L_{\a1}))^x=(L_{\a1})^x\le L_\a,
\]
which implies that $\l L_\a,M\r=L_\a<L$, a contradiction.

\vskip0.1in
\noindent{\underline{Row 2.}}{\hspace{5pt}}
Then $L=\A_{11}$ and $L_\a=\M_{11}$.
Since the $2$-transitive permutation representations of $L_\a=\M_{11}$ have degree $11$ or $12$, we have $\val(\Ga)=11$ or $12$.
However, computation in \magma~\cite{Magma} shows that $L=\A_{11}$ acting on $[\A_{11}:\M_{11}]$ has no suborbit of length $11$ or $12$, a contradiction.

\vskip0.1in
\noindent{\underline{Row 3.}}{\hspace{5pt}}
Then $L=\A_{12}$ and $L_\a=\M_{11}$.
For the same reason as Row~2, we have $\val(\Ga)=11$ or $12$.
Then since computation in \magma~\cite{Magma} shows that $L=\A_{12}$ has no suborbit of length $12$ on $[\A_{12}:\M_{11}]$,
we obtain $|L_\a|/|L_{\a\b}|=\val(\Ga)=11$.
Thus $L_{\a\b}=\M_{10}$.
It then follows from the Atlas~\cite{Atlas} that $M=\Nor_L(L_{\a\b})=\M_{10}.2$.
However, this yields $\l L_\a,M\r=\M_{12}$, contradicting the condition $\l L_\a,M\r=L$.

\vskip0.1in
\noindent{\underline{Row 4.}}{\hspace{5pt}}
For this row we have $L=\A_{12}$ and $L_{\a}=\M_{12}$.
Then $|L_\a|/|L_{\a\b}|=\val(\Ga)=12$ as the $2$-transitive permutation representations of $\M_{12}$ have degree $12$.
However, computation in \magma~\cite{Magma} shows that $L=\A_{12}$ has no suborbit of length $12$ on $[\A_{12}:\M_{12}]$, a contradiction.

\vskip0.1in
\noindent{\underline{Row 5.}}{\hspace{5pt}}
Then $L=\A_{22}$ and $L_\a=\M_{22}$.
Since the unique $2$-transitive permutation representation of $L_\a=\M_{22}$ is of degree $22$, we have $\val(\Ga)=22$ and $L_{\a\b}=\PSL_3(4)$.
Then computation in \magma~\cite{Magma} shows that $M=\Nor_L(L_{\a\b})=L_{\a\b}.3$.
This implies that every $2$-element $g$ of $M$ lies in $L_{\a\b}$ and hence $\l L_\a,g\r=L_\a<L$, contradicting Lemma~\ref{Sab64}.

\vskip0.1in
\noindent{\underline{Row 6.}}{\hspace{5pt}}
Then $L=\A_{23}$ and $L_\a=\M_{23}$.
Since the unique $2$-transitive permutation representation of $L_\a=\M_{23}$ is of degree $23$, we have $\val(\Ga)=23$ and $L_{\a\b}=\M_{22}$.
Then computation in \magma~\cite{Magma} shows that $M=\Nor_L(L_{\a\b})=L_{\a\b}$, which implies $\l L_\a,M\r=L_\a<L$, a contradiction.

\vskip0.1in
\noindent{\underline{Row 7.}}{\hspace{5pt}}
Then $L=\A_{24}$ and $L_{\a}=\M_{24}$.
Similarly as Row 6, we derive that $\val(\Ga)=24$ and $L_{\a\b}=\M_{23}$,
and then computation in \magma~\cite{Magma} shows $M=L_{\a\b}$,
so $\l L_\a,M\r=L_\a<L$, giving a contradiction.

\vskip0.1in
\noindent{\underline{Row 8.}}{\hspace{5pt}}
Then $L_\a$ is an almost simple $3$-transitive group on $\Ome$ with socle $\PSL_2(p^f)$.
Since $L_\a$ is $2$-transitive on $[L_\a:L_{\a\b}]$, the classification of $2$-transitive groups shows that
either $L_{\a\b}$ is conjugate to $L_{\a1}$ in $L_\a$, or $(L_\a,L_{\a\b})=(\PSL_2(11),\A_5)$ or $(\PGaL_2(8),9{:}6)$.
As $L_\a$ is $3$-transitive group on $\Ome$, we have $L_\a\ne\PSL_2(11)$.
If $(L_\a,L_{\a\b})=(\PGaL_2(8),9{:}6)$, then computation in \magma~\cite{Magma} shows that $M=\Nor_L(L_{\a\b})=L_{\a\b}$,
which implies $\l L_\a,M\r=L_\a<L$, a contradiction.

Thus $L_{\a\b}$ is conjugate to $L_{\a1}$ in $L_\a$, that is, $L_{\a\b}=(L_{\a1})^x$ for some $x\in L_\a$.
Let $K=\PGaL_2(p^f)$ be an overgroup of $L_\a$ in $\Sym(\Ome)=\Sy_{n+k}$.
Then the stabilizer $K_1=\AGaL_1(p^f)$ is $2$-transitive on $\Ome\setminus\{1\}$.
Also, $L_{\a1}$ is $2$-transitive on $\Ome\setminus\{1\}$, and so is its overgroup $\Nor_{L_1}(L_{\a1})$.
Now $\Nor_{L_1}(L_{\a1})$ is a $2$-transitive group on $\Ome\setminus\{1\}$
and has a normal $2$-transitive subgroup $L_{\a1}$ on $\Ome\setminus\{1\}$ with $L_{\a1}\le K_1=\AGaL_1(p^f)$.
We conclude from the classification of $2$-transitive groups that $\Nor_{L_1}(L_{\a1})\le K_1=\AGaL_1(p^f)$.
This together with~\eqref{Eqn1} implies that $(\Nor_L(L_{\a1}))^x=(\Nor_{L_1}(L_{\a1}))^x\le(K_1)^x\le K$ as $x\in L_\a\le K$.
Hence
\[
\l L_\a,M\r=\l L_\a,\Nor_L(L_{\a\b})\r=\l L_\a,\Nor_L((L_{\a1})^x)\r=\l L_\a,(\Nor_L(L_{\a1}))^x\r\le K,
\]
contradicting the condition $\l L_\a,M\r=L$.
\end{proof}

By the above lemma, we have either $k=1$ or $k=2$. If $k=1$ then part (b) of Proposition \ref{Quasi-case} holds, and for $k=2$, to complete the
statement of part (c) of Proposition \ref{Quasi-case}, we remain to show the following:

\begin{lemma}
Let $L=\A_{n+2}$. If $L_\a$ is $2$-transitive on $n+2$ points, then $\Ga$ is $(L,2)$-arc-transitive.
\end{lemma}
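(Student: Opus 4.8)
The plan is to exploit the dichotomy that a $2$-transitive group is either almost simple or affine, and to treat these two cases by entirely different means. Since an almost simple group is nonsolvable, and a nonsolvable affine group is equally nonsolvable, every case in which $L_\a$ is nonsolvable is disposed of at once by Lemma~\ref{(L,2)-ArcT}, which yields that $\Ga$ is $(L,2)$-arc-transitive. Thus the substance of the proof lies in the remaining case, where $L_\a$ is solvable; here $L_\a$ must be of affine type, say $\C_p^d\le L_\a\le\AGL_d(p)$ with $p^d=n+2$. For this situation I would invoke Lemma~\ref{AffineFaithful} to conclude that $G_\a$ acts faithfully on $\Ga(\a)$, so that the restriction homomorphism identifies $G_\a$ with $G_\a^{\Ga(\a)}$ and, in particular, restricts to an isomorphism $L_\a\to L_\a^{\Ga(\a)}$.

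The key idea is to transport the $2$-transitivity of $L_\a$ on its natural $n+2$ points to the action on $\Ga(\a)$ via the translation subgroup. Let $V=\Soc(L_\a)\cong\C_p^d$ be the regular normal subgroup of $L_\a$ in its affine action. Since the linear part $L_\a/V$ embeds faithfully and irreducibly in $\GL(V)$, one has $\Cen_{L_\a}(V)=V$, so $V$ is the unique minimal normal subgroup of $L_\a$ and hence characteristic; as $L_\a\trianglelefteq G_\a$, this makes $V$ normal in $G_\a$. I would then observe that the restriction map $V\to V^{\Ga(\a)}$ is $G_\a$-equivariant for the conjugation actions, so that $V$ and $W:=V^{\Ga(\a)}$ are isomorphic as $G_\a$-modules.

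With this module isomorphism in hand, the two ingredients needed for $2$-transitivity of $L_\a^{\Ga(\a)}$ on $\Ga(\a)$ fall out. On the one hand, $W$ is a nontrivial normal subgroup of the primitive group $G_\a^{\Ga(\a)}$, hence transitive, and being abelian it is regular on $\Ga(\a)$; in particular $\val(\Ga)=|W|=n+2$. On the other hand, $2$-transitivity of the natural action of $L_\a$ means precisely that $L_\a/V$ acts transitively on $V\setminus\{0\}$ by conjugation, and the module isomorphism transfers this to transitivity of $L_\a^{\Ga(\a)}/W$ on $W\setminus\{0\}$. Since $W$ is regular, a point stabilizer in $L_\a^{\Ga(\a)}$ maps isomorphically onto $L_\a^{\Ga(\a)}/W$ and acts on $\Ga(\a)\setminus\{\gamma\}$, identified with $W\setminus\{0\}$ through the regular action, exactly by this conjugation; hence the stabilizer is transitive there, $L_\a^{\Ga(\a)}$ is $2$-transitive, and $\Ga$ is $(L,2)$-arc-transitive.

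The main obstacle is conceptual rather than computational: the permutation action of $L_\a$ on $\Ga(\a)$ is a priori unrelated to its natural degree-$(n+2)$ action, so one cannot simply quote that $L_\a$ is $2$-transitive. The crux is to recognize that $2$-transitivity of an affine group is an intrinsic property of its translation module---namely that the point stabilizer acts transitively on the nonzero vectors---and that faithfulness on $\Ga(\a)$, supplied by Lemma~\ref{AffineFaithful}, forces the two affine actions to share the same module $V\cong W$. The points requiring genuine care are the verification of the $G_\a$-equivariance of the restriction map and the matching of the abstract conjugation action of a point stabilizer with its actual permutation action on $\Ga(\a)\setminus\{\gamma\}$.
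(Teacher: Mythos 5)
Your proof is correct and follows essentially the same route as the paper's: the nonsolvable case is settled by Lemma~\ref{(L,2)-ArcT}, and the affine case by Lemma~\ref{AffineFaithful} together with the observation that a faithful $2$-transitive action of the affine group $G_\a$ must agree with its natural one. Your module-theoretic argument simply spells out in detail the step the paper states in one line (that $G_\a^{\Ga(\a)}$ is equivalent to the natural degree-$(n+2)$ representation, whence $L_\a^{\Ga(\a)}$ is $2$-transitive), and your partition into solvable versus nonsolvable rather than affine versus almost simple is an immaterial reshuffling of cases.
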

\begin{proof}
Recall $L_\a$ is either almost simple or affine.  The lemma is true for  the case $L_\a$ is almost simple
by Lemma~\ref{(L,2)-ArcT}. So we assume that $L_\a$ is an affine 2-transitive subgroup of $\A_{n+2}$. Then $G_\a$ is an affine 2-transitive subgroup of $\Sy_{n+2}$. By Lemma \ref{AffineFaithful}, $G_\a$ acts faithfully on $\Ga(\a)$. Thus $G_\a^{\Ga(\a)}$ is equivalent to the  $2$-transitive permutation representation of $G_\a$ naturally on $n+2$ points. This implies $L_\a^{\Ga(\a)}$ is 2-transitive as $L_\a$ is 2-transitive on $n+2$ points. Thus $\Ga$ is $(L,2)$-arc-transitive.
\end{proof}

\subsection{Case~(a.3)}

Next we consider $(L, n, L_\a)$ in Table~\ref{Sporadic}, as in case~(a.3).
We deal with Row~21 of Table~\ref{Sporadic} separately in the following lemma.

\begin{lemma} \label{Row21}
The triple $(L,n,L_\a)$ cannot lie in Row~$21$ of Table~$\ref{Sporadic}$.
\end{lemma}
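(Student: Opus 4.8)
The plan is to eliminate Row~21 of Table~\ref{Sporadic}, where $L=\Omega_8^+(2)$, $n=9$, and the point stabilizer satisfies $2^4{:}\A_5\le L_\a\le 2^6{:}\A_8$. The key structural fact is that here $L_\a$ contains a large normal $2$-subgroup $O=\bfO_2(L_\a)$ with $2^4\le O\le 2^6$, and the quotient $L_\a/O$ is sandwiched between $\A_5$ and $\A_8$; in particular $L_\a$ is nonsolvable. So by Lemma~\ref{(L,2)-ArcT}, the graph $\Ga$ is $(L,2)$-arc-transitive, and hence $L_\a$ acts $2$-transitively on $\Ga(\a)$, identified with $[L_\a:L_{\a\b}]$.

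First I would pin down which $2$-transitive actions $L_\a$ admits. Since $L_\a$ has a nontrivial normal $2$-subgroup $O$, any faithful primitive (in particular $2$-transitive) action of $L_\a$ forces $O$ to act nontrivially, and the socle of a $2$-transitive group is either elementary abelian or simple. If the action is affine, then $O$ must be the full socle and $L_\a$ is an affine $2$-transitive group of degree a power of $2$; comparing with $2^4\le O\le 2^6$ pins the degree $\val(\Ga)$ down to one of $16,32,64$, with $L_\a/O$ a $2$-transitive linear group of the appropriate dimension. If instead the action is almost simple, then $O$ lies in the kernel $L_\a^{[1]}$, contradicting Lemma~\ref{Insoluble} applied to the composition factors — so this case does not arise and the action must be affine. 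This is the first real constraint: the valency is forced into a very short list.

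Next I would use the machinery of Lemma~\ref{Sab64}: writing $M=\Nor_L(L_{\a\b})$, connectivity gives $\l L_\a,M\r=L$, and there must exist a $2$-element $g\in\Nor_L(L_{\a\b})$ with $g^2\in L_\a$, $\l L_\a,g\r=L$, and $|L_\a|/|L_{\a\b}|=\val(\Ga)$. With the valency restricted to $\{16,32,64\}$ and $L=\Omega_8^+(2)$ of manageable order, the cleanest route is a direct computation in \magma~\cite{Magma}: enumerate the groups $L_\a$ with $2^4{:}\A_5\le L_\a\le 2^6{:}\A_8$ up to conjugacy in $L$, compute the suborbits of $L$ on $[L:L_\a]$, check for any self-paired suborbit of length in $\{16,32,64\}$, and for each such suborbit verify whether $\l L_\a,\Nor_L(L_{\a\b})\r=L$ with the requisite $2$-element. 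I expect every candidate to fail either the suborbit-length test or the connectivity/generation test, yielding the desired contradiction.

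The main obstacle is organizational rather than conceptual: the interval $2^4{:}\A_5\le L_\a\le 2^6{:}\A_8$ contains several non-conjugate intermediate groups, so one must be careful to test all of them (and all their relevant suborbits) rather than a single representative. The theoretical filtering via the affine/almost-simple dichotomy is essential precisely because it cuts the valency down to three values, keeping the \magma~search feasible; without it one would face far too many suborbits to inspect by hand. I would therefore present the argument as: reduce to the affine case and the short valency list by the structural lemmas above, then invoke a finite \magma~verification to rule out the remaining finitely many triples.
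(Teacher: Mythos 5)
Your overall strategy---invoke Lemma~\ref{(L,2)-ArcT} to get $(L,2)$-arc-transitivity, split according to whether the $2$-transitive group $L_\a^{\Ga(\a)}$ is affine or almost simple, and finish with a finite \magma{} verification---is the same as the paper's. However, there is a genuine gap in your dichotomy step: you dismiss the almost simple case by asserting that $O=\bfO_2(L_\a)\le L_\a^{[1]}$ contradicts Lemma~\ref{Insoluble}. It does not. That lemma only says that every composition factor of $L_\a$ is a section of $L_\a^{\Ga(\a)}$; here the composition factors of $L_\a$ are copies of $\ZZ_2$, possibly a $\ZZ_3$, and a single $\A_s$ with $5\le s\le 8$, and all of these are sections of $\Sy_t$ acting $2$-transitively on $t\ge s$ points. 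Nothing forbids the whole of $O$ from lying in the kernel of the action on $\Ga(\a)$. Indeed, in the paper's proof the almost simple case survives the theoretical filtering and yields seven concrete pairs $(L_\a,|\Ga(\a)|)$, such as $(2^6{:}\A_7,\,7)$ and $([2^6]{:}\A_6,\,6)$, each of which must be eliminated by an explicit \magma{} check that $L_\a$ fails to act $2$-transitively on the relevant suborbit. Your argument would silently skip all of these. (A theoretical handle on this case would have to come from Theorem~\ref{Weiss}, since $O\le L_\a^{[1]}$ forces $L_\a^{[1]}\ne1$, but even that requires a nontrivial case analysis; the paper simply computes.)

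A secondary, less serious issue: in the affine case you conclude only $\val(\Ga)\in\{16,32,64\}$ from $2^4\le O\le2^6$. Note that the image of $O$ in $L_\a^{\Ga(\a)}$ need not be all of $O$, so the lower bound requires the minimality of the socle rather than a direct comparison of orders; and the classification of affine $2$-transitive groups whose unique nonsolvable composition factor lies in $\{\A_5,\dots,\A_8\}$ in fact forces the degree to be exactly $16$, which is the bound the paper uses. Your coarser list is harmless provided the search covers all three values, but to make the final computation explicit you would also need the paper's organizational device of enumerating all $36$ conjugacy classes of admissible $L_\a$ together with their suborbit lengths, as in Table~\ref{tb-La}.
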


\begin{proof}
Suppose for a contradiction that $L=\Ome_8^+(2)$, $n=9$ and $2^4{:}\A_5\leq L_\a\leq 2^6{:}\A_8$, as in Row~21 of Table~\ref{Sporadic}.
Then computation in \magma~\cite{Magma} shows that there are 36 candidates for $L_\a$ (up to $L$-conjugate) giving rise to a factorization $L=\A_nL_\a$.
In Table~\ref{tb-La}, we list these candidates together with the lengths of their orbits on $V(\Ga)\setminus\{\a\}$, where each row corresponds to two isomorphic conjugacy classes, and the expression $64^170^1$ (in the last row) etc. means that
there are exactly 1 orbit with length 64 and 1 orbit with length 70 of $L_\a$ in the corresponding row etc.

\begin{table}[ht]
\[
\begin{array}{ll}
\hline
L_\a & \text{Orbits on } V(\Ga)\setminus\{\a\}\\
\hline
2^4{:}\A_5 & 1^{23}10^{132}16^{96}80^{441}160^{72}240^{1}320^{216}480^{98}960^{16} \\
2^4{:}\Sy_5 & 1^{1}2^{5}10^{6}16^{4}20^{30}32^{22}40^{1}80^{24}120^{1}160^{114}240^{6}320^{32}480^{22}640^{43}960^{18}1920^{1}\\
2^4{:}3{:}\A_5	& 1^{1}3^{2}10^{2}16^{2}30^{14}48^{10}80^{1}160^{2}240^{49}320^{1}480^{10}960^{26}1440^{10}2880^{1}\\
2^4{:}3{:}\Sy_5 & 3^{1}10^{1}16^{1}30^{5}40^{1}48^{3}60^{1}96^{1}120^{1}160^{2}240^{16}480^{15}720^{4}960^{4}1440^{4}1920^{3}\\
2^5{:}\A_5 & 1^{3}2^{4}5^{4}10^{16}20^{24}32^{24}40^{9}80^{24}120^{1}160^{100}240^{2}320^{20}480^{16}640^{52}960^{16}1920^{4}\\
2^5{:}\Sy_5 & 1^{1}4^{1}5^{2}20^{9}32^{4}40^{14}60^{1}64^{4}80^{5}160^{12}240^{9}320^{28}480^{4}640^{15}960^{8}1280^{6}1920^{2} \\
2^5{}^{\boldsymbol{.}}\Sy_5 & 1^{1}4^{1}5^{2}20^{5}40^{8}60^{1}64^{6}80^{1}160^{6}240^{1}320^{27}480^{2}640^{4}960^{3}1280^{13}1920^{4}3840^{1} \\
 2^6 {:}\A_5 & 1^{5}5^{18}20^{9}40^{6}60^{1}64^{6}120^{2}160^{18}320^{27}480^{6}640^{9}960^{9}1280^{9}3840^{1}\\
 \text{[}2^6 \text{]}{:}\Sy_5 & 2^{1}5^{1}10^{5}20^{2}30^{1}40^{2}64^{1}80^{3}120^{1}128^{1}160^{5}240^{2}320^{6}480^{3}640^{8}960^{2}1280^{3}1920^{2}2560^{1}\\
2^6{:}3{:}\A_5 &1^{1}15^{2}20^{1}40^{4}60^{1}64^{2}480^{4}960^{3}1280^{1}1920^{1}2880^{1}3840^{1}\\
2^6{:}3{:}\Sy_5 & 10^{1}15^{1}30^{1}40^{2}64^{1}240^{4}480^{1}640^{1}960^{2}1440^{1}1920^{1}\\
\text{[}2^5\text{]}{:}\A_6 & 1^{3}12^{4}32^{8}60^{5}120^{4}192^{16}480^{8}640^{4}720^{1}960^{4}\\
2^5{:}\Sy_6 & 1^{1}24^{1}30^{5}32^{4}240^{1}360^{1}384^{4}480^{8}640^{2}\\
\text{[}2^5\text{]}{}^{\boldsymbol{.}}\Sy_6 & 1^{1}24^{1}30^{1}64^{2}120^{3}360^{1}384^{4}640^{2}960^{2}1920^{1}\\
\text{[}2^6\text{]}{:}\A_6 & 1^{1}6^{4}30^{1}64^{2}120^{1}240^{1}360^{1}384^{4}480^{4}640^{2}1920^{1}\\
\text{[}2^6\text{]}{:}\Sy_6 & 12^{1}15^{1}60^{1}64^{1}120^{1}180^{1}480^{2}640^{1}768^{1}960^{1}\\
2^6{:}\A_7 & 7^{1}64^{1}280^{2}448^{1}\\
2^6{:}\A_8 & 64^{1}70^{1}\\
\hline
\end{array}
\]
\caption{Candidates for $L_\a$ in Row~$21$ of Table~\ref{Sporadic}}\label{tb-La}
\end{table}

Since $L_\a$ is insolvable, Lemma~\ref{(L,2)-ArcT} asserts that $\Ga$ is $(L,2)$-arc-transitive, and so $L_\a^{\Ga(\a)}$ is $2$-transitive.
Thus $L_\a^{\Ga(\a)}$ is affine or almost simple.

First assume that $L_\a^{\Ga(\a)}$ is affine.
Inspecting Table~\ref{tb-La} we see that the homomorphic image $L_\a^{\Ga(\a)}$ of $L_\a$ has socle a $2$-group, and has a unique nonsolvable composition factor, which is one of $\A_5$, $\A_6$, $\A_7$ or $\A_8$.
By the classification of $2$-transitive groups, a $2$-transitive affine group with socle a $2$-group and the nonsolvable composition factor $\A_5$, $\A_6$, $\A_7$ or $\A_8$ is one of the following:
\[
2^4{:}\A_6,\ 2^4{:}\Sy_6,  \ 2^4{:}\A_7,\ \ 2^4{:}\SL_2(4),\ 2^4{:}\GL_2(4), \ 2^4{:}\GaL_2(4),\ \ 2^4{:}\SL_4(2),\ \ 2^4{:}\Sp_4(2).
\]
Since these permutation groups all have degree $16$, we conclude that $|\Ga(\a)|=16$.
Thus $L_\a$ has an orbit of length $16$.
Then by Table~\ref{tb-La}, the possibilities for $L_\a$ are:
\[
2^4{:}\A_5,\ \ 2^4{:}\Sy_5,\ \ 2^4{:}3{:}\A_5,\ \ 2^4{:}3{:}\Sy_5.
\]
However, for each possible $L_\a$ and each orbit of length $16$, computation in \magma~\cite{Magma} shows that the action of $L_\a$ on this orbit is not $2$-transitive, contradicting the $2$-transitivity of $L_\a^{\Ga(\a)}$.

Next assume that $L_\a^{\Ga(\a)}$ is almost simple.
Then from Table~\ref{tb-La} we see that $\Soc(L_\a^{\Ga(\a)})=\A_t$ with $t\in\{5,6,7,8\}$.
By the classification of almost simple $2$-transitive groups, either $|\Ga(\a)|=t$ or $(t,|\Ga(\a)|)\in\{(5,6),(7,15),(8,15)\}$.
As $L_\a$ has an orbit of length $|\Ga(\a)|$, by checking Table~\ref{tb-La} we obtain the following possibilities for $(L_\a,|\Ga(\a)|)$:
\[
(2^5{:}\A_5,5)\ \ (2^5{:}\Sy_5,5),\ \ (2^5{}^{\boldsymbol{.}}\Sy_5,5)\ \ (2^6{:}\A_5,5)\ \ ([2^6]{:}\Sy_5,5),\ \ ([2^6]{:}\A_6,6),\ \ (2^6{:}\A_7,7).
\]
However, for each possible $L_\a$ in these pairs of $(L_\a,|\Ga(\a)|)$ and for each orbit of length $|\Ga(\a)|$, computation in \magma~\cite{Magma} shows that the action of $L_\a$ on this orbit is not $2$-transitive, contradicting the $2$-transitivity of $L_\a^{\Ga(\a)}$.
\end{proof}

We are now in a position to completely determine $(G,H,\Ga)$ for case~(a.3), which will finish the proof of Proposition~\ref{Quasi-case}.

\begin{lemma}\label{Small-gps}
Let $(L,n,L_\a)$ be in Table~$\ref{Sporadic}$.
Then $n=5$, and one of the following holds:
\begin{enumerate}[{\rm (a)}]
\item $G=\M_{12}$, $G_\a=\M_{11}$ and $\Ga=\K_{12}$;
\item $G=\PGL_2(p)$ with $p\in\{11,19,29,59\}$, $G_\a=\AGL_1(p)$ and $\Ga=\K_{p+1}$.
\end{enumerate}
\end{lemma}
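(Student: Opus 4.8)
The plan is to exploit that $(G,2)$-arc-transitivity makes the local action $G_\a^{\Ga(\a)}$ a $2$-transitive group of degree $d:=\val(\Ga)$, and to run through the triples $(L,n,L_\a)$ of Table~\ref{Sporadic} (Row~21 having already been excluded by Lemma~\ref{Row21}). I would organise the argument by the solvability of $L_\a$, since the two regimes call for different tools. Throughout, $L=\Soc(G)$ and $\b\in\Ga(\a)$.

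Suppose first that $L_\a$ is nonsolvable. Then Lemma~\ref{(L,2)-ArcT} shows $\Ga$ is $(L,2)$-arc-transitive, so $L_\a^{\Ga(\a)}$ is itself $2$-transitive; writing $\Ga\cong\Cos(L,L_\a,g)$ as in Lemma~\ref{Sab64}, the neighbourhood $\Ga(\a)$ is identified with $[L_\a:L_{\a\b}]$, where $L_{\a\b}=L_\a\cap L_\a^g$ is maximal in $L_\a$ and $d=|L_\a:L_{\a\b}|$ is the degree of a $2$-transitive quotient of $L_\a$. The first and crucial filter is the classification of $2$-transitive groups applied to this quotient: its socle is either elementary abelian or a simple $2$-transitive group. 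For the entries in which $L_\a$ admits no such quotient of degree $\ge3$ this is already a contradiction. This should dispose of every entry whose only nonabelian composition factor is a simple group absent from the list of $2$-transitive socles---namely $\Omega_8^-(2)$ (Row~14, and, via the affine alternative, Row~24, since an orthogonal group is never transitive on the nonzero vectors of its module), $\Omega_7(3)$ (Row~23), $\G_2(3)$ (the last entry of Row~16), and $\PSU_4(2)$ (Row~19 and the entry $\PSU_4(2){:}2$ of Row~13).

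For each surviving nonsolvable entry the candidate valencies form a short explicit list, namely the $2$-transitive degrees of $L_\a$ (for instance $d=28$ for $\PSU_3(3){:}2$, $d\in\{13,27\}$ for $3^3{:}\PSL_3(3)$, and $d=40$ for $\PSL_4(3)$). It then remains, for each such $d$, to decide whether $L$ on $[L:L_\a]$ has a self-paired suborbit of length $d$ whose orbital graph is connected. I would settle this by computing the subdegrees of $L$ and, for each match, testing the connectivity criterion $\l L_\a,\Nor_L(L_{\a\b})\r=L$ of Lemma~\ref{Sab64} (frequently by showing $\Nor_L(L_{\a\b})\le L_\a$, whence $\l L_\a,\Nor_L(L_{\a\b})\r=L_\a<L$), exactly as was done by hand in the rows of Lemma~\ref{k>2}. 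I expect this to eliminate every surviving nonsolvable entry except $L=\M_{12}$, $L_\a=\M_{11}$, where $d=11$ and $\Ga=\K_{12}$; since the two $12$-point representations of $\M_{12}$ are interchanged by its outer automorphism, this forces $G=\M_{12}$, which is conclusion~(a).

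Now suppose $L_\a$ is solvable. Then $G_\a$ is solvable as well, so the $2$-transitive group $G_\a^{\Ga(\a)}$ is affine and $d$ is a prime power. Restricting the admissible valencies to prime powers and computing the relevant subdegrees of $G$ (or $L$), I would rule out Rows~1, 10 and 11 together with the solvable entries of Row~20, and isolate Rows~5--8, where $L=\PSL_2(p)$ with $p\in\{11,19,29,59\}$; there the Borel stabilizer $G_\a=\AGL_1(p)$ has only the degree-$p$ affine $2$-transitive action, which forces $d=p$ and hence $\Ga=\K_{p+1}$. Since $\K_{p+1}$ is $(G,2)$-arc-transitive exactly when $G$ is $3$-transitive on the $p+1$ points, we must have $G=\PGL_2(p)$ with $G_\a=\AGL_1(p)$, which is conclusion~(b). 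As every surviving entry lies in a row with $n=5$, the asserted equality $n=5$ follows. The main obstacle will be the entries with the large socles $\Omega_8^+(2)$ and $\POm_8^+(3)$ (Rows~17, 18, 20, 22), solvable or not: there $[L:L_\a]$ runs into the thousands or far beyond, so a brute-force suborbit count is expensive, and I anticipate having to supplement \magma\ with structural estimates on the possible suborbit lengths and on $\Nor_L(L_{\a\b})$ to complete those cases.
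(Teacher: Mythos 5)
Your proposal is correct and follows essentially the same route as the paper: invoke Lemma~\ref{(L,2)-ArcT} to pass to $(L,2)$-arc-transitivity, use the classification of $2$-transitive groups to pin down the possible local actions $L_\a^{\Ga(\a)}$ and hence the admissible valencies (discarding outright the entries such as $\mathrm{SO}_8^-(2)$, $\Omega_7(3)$ and $2^8{:}\Omega_8^-(2)$ with no $2$-transitive quotient), and then eliminate the survivors by checking suborbit lengths and the connectivity criterion $\l L_\a,\Nor_L(L_{\a\b})\r=L$ of Lemma~\ref{Sab64} in \magma, leaving only Rows~4 and~5--8. The only real difference is organizational (solvable versus nonsolvable $L_\a$ instead of row by row) and that the paper dispatches Rows~1--3 with the hand argument of Lemma~\ref{lm-AnAn-2Sn-2} where you would compute.
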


\begin{proof}
By Lemma~\ref{Row21}, Row 21 of Table~\ref{Sporadic} cannot occur.
Thus we only need to deal with Rows~1--20 and~22--24 of Table~\ref{Sporadic}.

\vskip0.1in
\noindent{\underline{Row 1.}}{\hspace{5pt}}
For this row we have $L=\A_6$, $n=5$ and $L_{\a}=\A_4$ or $\Sy_4$. Then $\A_6\cong\PSL_2(9)\le G\le\PGaL_2(9)$, and $G=HG_\a$ with $H=\A_5$ and $G_\a\cap L=\A_{4}$ or $\Sy_4$. Searching for such a group factorization in \magma~\cite{Magma} shows that $(G,G_\a)$ is the one of the following pairs:
\[
(\A_6,\,\A_4),\ \ (\A_6,\,\Sy_4),\ \ (\Sy_6,\,\Sy_4),\ \ (\Sy_6,\,\A_4\times2),\ \ (\Sy_6,\,\Sy_4\times2).
\]
By Lemma \ref{lm-AnAn-2Sn-2}, the first two candidates for $(G,G_\a)$ are not possible.
Hence $G=\Sy_6$ and $G_\a=\Sy_4$, $\A_4\times2$ or $\Sy_4\times2$.
Then since the action of $G_\a$ on $[G_\a:G_{\a\b}]$ is $2$-transitive, we conclude that $|G_\a|/|G_{\a\b}|=3$ or $4$.
For the three possibilities of $G_\a$, computation in \magma~\cite{Magma} shows that $G=\Sy_6$ has a suborbit of length $3$ or $4$ only if $G_\a=\Sy_4$ and the suborbit length is $4$.
However, in this case computation in \magma~\cite{Magma} shows that there is no element $g\in G$ satisfying:
\[
g\in\Nor_G(G_\a\cap G_\a^g),\ \ g^2\in G_\a,\ \ \l G_\a,g\r=G,\ \ |G_\a|/|G_\a\cap G_\a^g|=4.
\]
This contradicts Lemma~\ref{Sab64}.

\vskip0.1in
\noindent{\underline{Rows 2--3.}}{\hspace{5pt}}
For these two rows we have $L=\A_m$ with $m\in\{10,15\}$ and $L_{\a}=\A_{m-2}$ or $\Sy_{m-2}$.
Since $L_\a$ is insolvable, Lemma~\ref{(L,2)-ArcT} asserts that $\Ga$ is $(L,2)$-arc-transitive.
This contradicts Lemma~\ref{lm-AnAn-2Sn-2}.

\vskip0.1in
\noindent{\underline{Row 4.}}{\hspace{5pt}}
Here $L=\M_{12}$, $n=5$ and $L_\a=\M_{11}$.
Then $L$ is 2-transitive on $V(\Ga)|$ with $|V(\Ga)|=|L|/|L_\a|=12$,
it follows that $\Ga=\K_{12}$.
According to~\cite{Atlas}, $\Out(\M_{12})=2$, and $\Aut(\M_{12})$ has no subgroup of index $12$.
Hence $G\ne\Aut(\M_{12})$, and so $G=\M_{12}$.
This leads to part~(a) of the lemma.


\vskip0.1in
\noindent{\underline{Rows 5--8.}}{\hspace{5pt}}
Here $L=\PSL_2(p)$ with $p\in\{11,19,29,59\}$, $n=5$ and $p\le L_\a\le p{:}(p-1)/2$.
Since $G_\a$ is $2$-transitive on $\Ga(\a)$, we have $G_\a=\AGL_1(p)=p{:}(p-1)$ and $G=\PGL_2(p)$.
Hence $|\Ga(\a)|=p$ and $|V(\Ga)|=|G|/|G_\a|=|\PGL_2(p)|/|\AGL_1(p)|=p+1$.
This implies that $\Ga\cong\K_{p+1}$, leading to part~(b) of the lemma.

\vskip0.1in
\noindent{\underline{Row 9.}}{\hspace{5pt}}
For this row, $L=\PSL_{4}(3)$ and $L_\a=3^3{:}\PSL_3(3)$.
Then $L$ and $G$ are $2$-transitive but not 3-transitive on $V(\Ga)$,
which implies that $\Ga$ is a complete graph.
Notice that an automorphism group acting 2-arc-transitively on a complete graph
should be 3-transitive on the vertices,  contradicting that $G$ is not 3-transitive on $V(\Ga)$.


\vskip0.1in
\noindent{\underline{Row 10.}}{\hspace{5pt}}
For this row, $L=\PSU_{3}(5)$ and $L_{\a}=5_+^{1+2}{:}8$.
Then $L$ and $G$ are $2$-transitive but not 3-transitive on $V(\Ga)$,
hence $\Ga$ is a complete graph, a contradiction occurs by the same reason as in Row 9.


\vskip0.1in
\noindent{\underline{Row 11.}}{\hspace{5pt}}
Here $L=\PSp_{4}(3)$ and $L_{\a}=3_+^{1+2}{:}\Q_8$ or $3_+^{1+2}{:}2.\A_4$.
Since $\Out(L)=2$, we have $G_\a=L_\a$ or $L_\a.2$.
Then since $G_\a$ is $2$-transitive on $\Ga(\a)$, we see that $(G,G_\a,|\Ga(\a)|)$ is one of the following triples:
\[
(\PSp_{4}(3),\,3_+^{1+2}{:}\Q_8,\,9),\ \ (\PSp_{4}(3),\,3_+^{1+2}{:}2.\A_4,\,4),
\]
\[
(\PSp_{4}(3).2,\,3_+^{1+2}{:}\Q_8.2,\,9),\ \ (\PSp_{4}(3).2,\,3_+^{1+2}{:}2.\A_4.2,\,4).
\]
However, for each of the possible triples $(G,G_\a,|\Ga(\a)|)$, computation in \magma~\cite{Magma} shows that $G$ acting on $[G:G_\a]$ has no suborbit of length $|\Ga(\a)|$, a contradiction.

\vskip0.1in
\noindent{\underline{Rows 12--13.}}{\hspace{5pt}}
For these two rows, $L=\Sp_{6}(2)$, and $L_{\a}$ is one of the following groups:
\[
\PSU_3(3){:}2,\ \ 3_+^{1+2}{:}8{:}2,\ \ 3_+^{1+2}{:}2{.}\Sy_4,\ \ \PSL_2(8),\ \ \PSL_2(8){:}3,\ \ \PSU_4(2){:}2.
\]
Since $\Out(L)=1$, we have $G=L$, and so $\Ga$ is $(L,2)$-arc-transitive.
Then since $G_\a$ is $2$-transitive on $\Ga(\a)$, we see that $(G_\a,|\Ga(\a)|)$ is one of the following pairs:
\[
(\PSU_3(3){:}2,\,28),\ \ (3_+^{1+2}{:}8{:}2,\,9),\ \ (3_+^{1+2}{:}2{.}\Sy_4,\,9),\ \ (3_+^{1+2}{:}2{.}\Sy_4,\,4),
\]
\[
(3_+^{1+2}{:}2{.}\Sy_4,\,3),\ \ (\PSL_2(8),\,9),\ \ (\PSL_2(8){:}3,\,9),\ \ (\PSL_2(8){:}3,\,28).
\]
However, for each of the possible pairs $(G_\a,|\Ga(\a)|)$, computation in \magma~\cite{Magma} shows that $G$ acting on $[G:G_\a]$ has no suborbit of length $|\Ga(\a)|$, a contradiction.

\vskip0.1in
\noindent{\underline{Row 14.}}{\hspace{5pt}}
For this row, $L_\a=\mathrm{SO}_8^-(2)$ is nonsolvable, and so $\Ga$ is $(L,2)$-arc-transitive by Lemma~\ref{(L,2)-ArcT}.
However, $\mathrm{SO}_8^-(2)$ has no $2$-transitive permutation representation, a contradiction.

\vskip0.1in
\noindent{\underline{Rows 15--16.}}{\hspace{5pt}}
For these two rows, $L=\Ome_7(3)$, and $L_\a$ is one of the following groups:
\[
3^{3+3}{:}\PSL_3(3),\ \ 3^3{:}\PSL_3(3),\ \ \PSL_4(3),\ \ \PSL_4(3){:}2,\ \ \G_2(3).
\]
In particular, $L_\a$ is nonsolvable.
Hence Lemma~\ref{(L,2)-ArcT} implies that $\Ga$ is $(L,2)$-arc-transitive.
Then as $L_\a$ is $2$-transitive on $\Ga(\a)$,
we see that $(L_\a,|\Ga(\a)|)$ is one of the following pairs:
\[
(3^{3+3}{:}\PSL_3(3),\,27),\ \ (3^{3+3}{:}\PSL_3(3),\,13),\ \ (3^3{:}\PSL_3(3),\,27),
\]
\[
(3^3{:}\PSL_3(3),\,13),\ \ (\PSL_4(3),\,40),\ \ (\PSL_4(3){:}2,\,40).
\]
However, for each of the possible pairs $(L_\a,|\Ga(\a)|)$, computation in \magma~\cite{Magma} shows that there is no subgroup $K$ of $L_\a$ with $|L_\a|/|K|=|\Ga(\a)|$ and $\l L_\a,\Nor_L(K)\r=L$, contradicting Lemma~\ref{Sab64}.

\vskip0.1in
\noindent{\underline{Rows 17--20.}}{\hspace{5pt}}
Here $L=\Ome_8^+(2)$, and $L_\a$ is one of the following groups:
\[
\Sp_6(2),\ \ \A_9,\ \ \PSU_4(2),\ \ \PSU_4(2){:}2,\ \ 3\times\PSU_4(2),\ \ (3\times\PSU_4(2)){:}2,
\]
\[
2^4{:}15.4,\ \ 2^6{:}15,\ \ 2^6{:}15.2,\ \ 2^6{:}15.4,\ \ \A_8,\ \ \Sy_8.
\]

Suppose that $L_\a$ is nonsolvable.
Then $\Ga$ is $(L,2)$-arc-transitive by Lemma~\ref{(L,2)-ArcT}, and so $L_\a$ is $2$-transitive on $\Ga(\a)$,
and $L_\a^{\Ga(\a)}$ is almost simple by Lemma~\ref{Insoluble}, which implies that $(L_\a,|\Ga(\a)|)$ is one of the following pairs:
\[
(\Sp_6(2),\,28),\ \ (\Sp_6(2),\,36),\ \ (\A_9,\,9),\ \ (\A_8,\,8),\ \ (\A_8,\,15),\ \ (\Sy_8,\,8).
\]
However, for each of these pairs $(L_\a,|\Ga(\a)|)$, computation in \magma~\cite{Magma} shows that $L$ acting on $[L:L_\a]$ has no suborbit of length $|\Ga(\a)|$, a contradiction.
Thus $L_\a$ is solvable, that is, $L_\a=2^4{:}15.4$, $2^6{:}15$, $2^6{:}15.2$ or $2^6{:}15.4$.
These candidates for $L_\a$ only occur in Row~20 of Table~\ref{Sporadic}, where it is shown that $n=9$.
As $L\le G\le\Aut(L)$ and $G_\a\cap L=L_\a\in\{2^4{:}15.4,2^6{:}15,2^6{:}15.2,2^6{:}15.4\}$, computation in \magma~\cite{Magma} shows that such pairs $(G,G_\a)$ that give rise to a factorization $G=\A_9G_\a$ are the following:
\[
(\Ome_8^+(2),\,2^4{:}15.4),\ \ (\Ome_8^+(2),\,2^6{:}15),\ \ (\Ome_8^+(2),\,2^6{:}15.2),\ \ (\Ome_8^+(2),\,2^6{:}15.4),
\]
\[
(\GO_8^+(2),\,(2^4{:}15.4){:}2),\ \ (\GO_8^+(2),\, (2^6{:}15){:}2),\ \ (\GO_8^+(2),\,(2^6{:}15.2).2),\ \ (\GO_8^+(2),\,(2^6{:}15.4){:}2).
\]
Then as $G_\a$ is $2$-transitive on $\Ga(\a)$, we conclude that $|\Ga(\a)|=16$.
However, for each of the above pairs $(G,G_\a)$, computation in \magma~\cite{Magma} shows that $G$ acting on $[G:G_\a]$ has no suborbit of length $16$ on which $G_\a$ acts $2$-transitively, a contradiction.

\vskip0.1in
\noindent{\underline{Row 22.}}{\hspace{5pt}}
For this row, $L=\POm_8^{+}(3)$, and $L_\a=3^6{:}\PSL_4(3)$ is nonsolvable.
Hence $\Ga$ is $(L,2)$-arc-transitive by Lemma~\ref{(L,2)-ArcT}.
This implies that $L_\a$ is $2$-transitive on $\Ga(\a)$, and so $|\Ga(\a)|=40$.
However, computation in \magma~\cite{Magma} shows that $L=\POm_8^{+}(3)$ acting on $[L:L_\a]$ has no suborbit of length $40$, a contradiction.

\vskip0.1in
\noindent{\underline{Rows 23--24.}}{\hspace{5pt}}
Here $L_\a=\Omega_7(3)$ or $2^8{:}\Omega_{8}^-(2)$.
Then Lemma~\ref{(L,2)-ArcT} asserts that $\Ga$ is $(L,2)$-arc-transitive since $L_\a$ is nonsolvable.
However, neither $\Omega_7(3)$ nor $2^8{:}\Omega_{8}^-(2)$ has a $2$-transitive permutation representation, a contradiction.
\end{proof}

\section{Proof of Theorem~\ref{th:1}}\label{sec2}

Let $\Ga$ be a connected $(G,2)$-arc-transitive Cayley graph on an alternating group $H\cong\A_n$ with $H\leq G$ and $n\ge 5$, and let $\a$ be a vertex of $\Ga$. Let $M$ be a maximal intransitive normal subgroup of $G$. Suppose that $H$ is not normal in $G$. We divides the proof into several lemmas.

\begin{lemma}
If $M=1$, then part {\rm (a)}, {\rm (b)} or {\rm (c)} of Theorem~$\ref{th:1}$ holds.
\end{lemma}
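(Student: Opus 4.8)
The plan is to observe that the hypothesis $M=1$ forces $G$ to be quasiprimitive on $V(\Ga)$, apply Proposition~\ref{Quasi-case}, and then match each of its outcomes to part~(a), (b) or~(c) of Theorem~\ref{th:1}, using the Cayley structure $|V(\Ga)|=|H|=n!/2$ to discard or pin down the degenerate cases.

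First I would note that, since $M$ is a \emph{maximal} intransitive normal subgroup of $G$, the equality $M=1$ means exactly that $G$ has no nontrivial intransitive normal subgroup; that is, every nontrivial normal subgroup of $G$ is transitive on $V(\Ga)$, so $G$ is quasiprimitive on $V(\Ga)$. As $\Ga$ is $(G,2)$-arc-transitive it is $G$-vertex-transitive, and $R_H(H)\cong\A_n$ with $n\ge5$ is a vertex-transitive alternating subgroup, so Proposition~\ref{Quasi-case} applies and returns $G$ almost simple together with one of its six cases~(a)--(f).

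The bookkeeping cases are quick. Case~(a), $\Soc(G)=H$, would make $H$ normal in $G$ and is excluded by hypothesis. In case~(b) we have $|\Soc(G)_\a|=|\A_{n+1}|/|V(\Ga)|=n+1$, and a transitive subgroup of $\A_{n+1}$ of order $n+1$ is regular on the $n+1$ points, giving part~(a) of Theorem~\ref{th:1}. In case~(d), $\Ga=\K_m$ with $\Soc(G)=\A_m$ forces $m=|V(\Ga)|=n!/2$, the first alternative of part~(c). In case~(e), $\Ga=\K_{p+1}$ can be a Cayley graph on $\A_5$ only if $p+1=|\A_5|=60$, so $p=59$, $\Ga=\K_{60}=\K_{5!/2}$, which is the second alternative of part~(c); case~(f), $\Ga=\K_{12}$ on $\A_5$, is impossible as $12\ne60$.

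The substantive case is~(c), where $\Soc(G)=\A_{n+2}$; write $N=\Soc(G)\trianglelefteq G$. I would first check $H<N$: since $G/N$ embeds in the solvable group $\Out(\A_{n+2})$ while $H\cong\A_n$ is perfect, the image of $H$ in $G/N$ is trivial, so $H\le N$, and $H<N$ by order. An order count gives $|N_\a|=|\A_{n+2}|/|V(\Ga)|=(n+1)(n+2)$, so the $2$-transitive subgroup $N_\a\le\A_{n+2}$ of degree $n+2$ is in fact sharply $2$-transitive; moreover $N_\a\le G_\a$ acts faithfully on $\Ga(\a)$ by the proposition, which also records that $\Ga$ is $(N,2)$-arc-transitive. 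The one real point is the valency. By the structure of finite sharply $2$-transitive groups, $N_\a$ has an elementary abelian regular normal subgroup $V$ with $|V|=n+2=p^d$; as $N_\a$ is faithful and $2$-transitive (hence primitive) on $\Ga(\a)$, the nontrivial normal subgroup $V$ is transitive there, and being abelian and faithful it is regular, whence $\val(\Ga)=|\Ga(\a)|=|V|=n+2$ is a prime power. This is precisely part~(b) of Theorem~\ref{th:1}, and completes the lemma. I expect this valency computation in case~(c) to be the only genuine obstacle; every other case is settled either by the hypothesis $H\not\trianglelefteq G$, by the vertex count $n!/2$, or by a one-line order argument, the heavy lifting having already been done in Proposition~\ref{Quasi-case}.
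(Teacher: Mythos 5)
Your proof is correct and follows essentially the same route as the paper: quasiprimitivity from $M=1$, then Proposition~\ref{Quasi-case}, then the order counts $|L_\a|=|L|/|H|$ coming from regularity of $H$ to pin down each case (regular stabilizer for $\A_{n+1}$, sharply $2$-transitive stabilizer for $\A_{n+2}$, $m=n!/2$ for the complete graphs, and $p=59$ as the only surviving sporadic case). The only, immaterial, divergence is in the $\A_{n+2}$ case, where you obtain $\val(\Ga)=n+2$ by noting that the elementary abelian socle of $N_\a$ must act regularly on $\Ga(\a)$ under the faithful primitive action, whereas the paper instead cites the uniqueness of the $2$-transitive permutation representation of a sharply $2$-transitive group; both arguments are valid.
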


\begin{proof}
Since $M=1$, the group $G$ is quasiprimitive on $V(\Ga)$, and so $\Ga$ is described in Proposition~\ref{Quasi-case}. 
It follows that $G$ is almost simple, and one of (b)--(f) of Proposition~\ref{Quasi-case} occurs as $H$ is not normal in $G$ by our assumption.
Let $L=\Soc(G)$. Since $H$ is nonabelian simple and $H/(H\cap L)\cong HL/L\leq G/L\leq \Out(L)$ is solvable,
we deduce $H\cap L=H$, that is, $H \leq L$.  
Note that both $L$ and $G$ are transitive on $V(\Ga)$. Since $H$ is regular on $V(\Ga)$, we have $|L|=|H||L_\a|$ and $|G|=|H||G_\a|$.

First assume that case~(b) of Proposition~\ref{Quasi-case} occurs. 
Then $L=\A_{n+1}$, and $L_{\a}$ is a transitive subgroup of $\A_{n+1}$. Since $|L_\a|= |L|/|H| =n+1$, we see that $L_{\a}$ is a regular subgroup of $\A_{n+1}$, as in part (a) of Theorem~\ref{th:1}.

Next assume that case~(c) of Proposition~\ref{Quasi-case} occurs. 
Then $\Ga$ is $(L,2)$-arc-transitive with $L=\A_{n+2}$, and $L_{\a}$ is a $2$-transitive subgroup of $\A_{n+2}$ acting faithfully on $\Ga(\a)$. Since $|L_\a|= |L|/|H|=(n+2)(n+1)$, it follows that $L_{\a}$ is a sharply $2$-transitive subgroup of $\A_{n+2}$.  
By~\cite[Chapter XII: Theorem 9.1]{Huppert82b}, sharply $2$-transitive groups are affine groups with degree a prime power. 
Since $L_{\a}$ acts faithfully on $\Ga(\a)$ and $L_{\a}$ has only one $2$-transitive permutation representation, we derive that $\val(\Ga)=n+2$ is a prime power. Hence part (b) of Theorem~\ref{th:1} holds (with $N=L$).

Now assume that case~(d) of Proposition~\ref{Quasi-case} occurs. 
Then $L=\A_m$, where $m$ is the index of a subgroup of $\A_n$, and $L_\a=\A_{m-1}$. 
It follows that $|\A_{m-1}|=|L_\a|=|L|/|H|=|\A_m|/|\A_n|$, which implies $m=|\A_n|$. 
This lies in part~(c) of Theorem~\ref{th:1}.

Finally, for cases (e) and (f) of Proposition~\ref{Quasi-case}, since $|G|=|H||G_\a|$, computation shows that there is only one possibility, namely, $\Ga=\K_{60}$, $H=\A_5$, $G=\PGL_2(59)$ and $G_\a=\AGL_1(59)$. This also lies in part~(c) of Theorem~\ref{th:1}.
\end{proof}

For the rest of this section we assume that $M \neq 1$. If $M$ has exactly two orbits on $V(\Ga)$, then $\Ga$ is a bipartite graph, and so the stabilizer in $H$ of one part has index $2$ in $H$, which is not possible as $H\cong\A_n$ is simple.
Thus $M$ has at least three orbits on $V(\Ga)$.
Then by Theorem \ref{th:Praeger}, $\Ga_M$ is $(G/M,2)$-arc-transitive, $(G/M)_v \cong G_\a$ for any $v \in V(\Ga_M)$, and $\Ga$ is a normal cover of $\Ga_M$.
Moreover, $M$ is semiregular but not transitive on $V(\Ga)$, and so $|M|$ properly divides $|V(\Ga)|=|H|$.
By the maximality of $M$, $G/M$ is quasiprimitive on $V(\Ga)$.
Since $H $ is regular on $V(\Ga)$, the group $HM/M\cong H\cong\A_n$ is transitive on $V(\Ga_M)$.
Hence the $(G/M,2)$-arc-transitive graph $\Ga_M$ is described in Proposition~\ref{Quasi-case}.

\begin{lemma}
$\Ga_M$ does not satisfy {\rm (e)} or {\rm (f)} of Proposition~$\ref{Quasi-case}$.
\end{lemma}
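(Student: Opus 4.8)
The plan is to pin down $M$ by an order count, then exploit the structure of $G$ as an extension of $X=G/M$ by $M$, so as to reduce to a short local/graph analysis. Throughout recall that $M$ is semiregular on $V(\Ga)$, that $M\cap H=1$ (as $H$ is simple and $M$ intransitive), so $|V(\Ga)|=|H|=60$, and that $(G/M)_v\cong G_\a$ with $X_v\cong G_\a$. Suppose $\Ga_M$ satisfies (e) or (f). Comparing $|G|=|M||X|$ with $|G|=|V(\Ga)||G_\a|=60|X_v|$ gives $|M|=60/|V(\Ga_M)|$. In case (e), with $X=\PGL_2(p)$ and $|V(\Ga_M)|=p+1$, this yields $|M|=5,3,2,1$ for $p=11,19,29,59$; the value $p=59$ is excluded since $M\ne1$. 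In case (f), with $X=\M_{12}$, it yields $|M|=5$. Hence in every surviving case $|M|$ is prime, so $M\cong\C_{|M|}$ is a minimal normal subgroup of $G$.

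Next I would analyse the extension. Since $M$ is abelian, $M\le\Cen_G(M)$ and $G/\Cen_G(M)$ embeds in the cyclic group $\Aut(M)$, so $G/\Cen_G(M)$ is abelian; thus $\Cen_G(M)\supseteq G'$ and its image $\Cen_G(M)M/M$ in $X$ contains $X'\supseteq\Soc(X)$. Writing $R$ for the preimage of $\Soc(X)$ in $G$, we get $R\le\Cen_G(M)$, so $M\le\Z(R)$ and $R$ is a central extension of $\Soc(X)$ by $M$. Each relevant socle $\Soc(X)\in\{\PSL_2(11),\PSL_2(19),\PSL_2(29),\M_{12}\}$ has Schur multiplier $\C_2$. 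Hence when $|M|$ is odd (case (f), and case (e) with $p=11,19$) the extension splits: $R=M\times T_0$ with $T_0\cong\Soc(X)$, and $T_0=R'$ is characteristic in $R$, so $T_0\trianglelefteq G$. When $|M|=2$ (case (e), $p=29$), $R$ is either $\C_2\times\PSL_2(29)$ or the nonsplit double cover $\SL_2(29)$.

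Now $G/R\cong X/\Soc(X)$ is abelian (trivial for $\M_{12}$, and $\C_2$ for $\PGL_2(p)$), so the perfect group $H\cong\A_5$ lies in $R$; when $R=M\times T_0$, projecting onto the abelian factor $M$ forces $H\le T_0$, and since $H$ is regular on the $60$ vertices, $T_0$ is transitive on $V(\Ga)$. This settles two cases. In case (f), $T_0\cong\M_{12}$ would then admit a subgroup of index $60$; but the maximal subgroups of $\M_{12}$ have indices $12,12,66,\dots$, and the only ones dividing $60$ are the two copies of $\M_{11}$, inside which there is no subgroup of index $5$ (the least index in $\M_{11}$ is $11$), so $\M_{12}$ has no transitive action of degree $60$, a contradiction. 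In the nonsplit subcase $R=\SL_2(29)$ of case (e), $H\cong\A_5$ cannot lie in $\SL_2(29)$ since the latter has a unique involution while $\A_5$ has fifteen; thus $H\cap R=1$ and $H$ embeds into the abelian group $G/R\cong\C_2$, which is absurd.

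The residual possibilities are $T_0\cong\PSL_2(p)$ with $p\in\{11,19,29\}$ (taking the split subcase when $p=29$), and I expect these to be the main obstacle: here $\Soc(X)$ genuinely has subgroups of index $60$ and $\A_5$ genuinely occurs as a regular subgroup (an exact factorization $\PSL_2(p)=\A_5\,U$ with $U$ in a Borel subgroup), so no contradiction follows from group orders alone and the finer graph structure must be used. My plan for this residue is to first note that $G_\a\cong\AGL_1(p)$ acts faithfully on $\Ga(\a)$ (a $2$-transitive group of degree $p=\val(\Ga)$ has order at least $|\AGL_1(p)|$), and that $(T_0)_\a$ maps onto the regular normal subgroup $\C_p\trianglelefteq G_\a^{\Ga(\a)}\cong\AGL_1(p)$; hence $(T_0)_\a$ is transitive on $\Ga(\a)$ and $\Ga$ is $T_0$-arc-transitive. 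Thus $\Ga$ is one of the finitely many orbital graphs of $T_0=\PSL_2(p)$ on the $60$ cosets of $U$ whose defining suborbit has length $p$, and $(G,2)$-arc-transitivity forces $G_\a\cong\AGL_1(p)$ to act $2$-transitively on that suborbit. I would then rule these out by examining the $U$-suborbits together with the action of $G_\a/(T_0)_\a\cong\C_{p-1}$ on them, checking—by a short computation in \magma\ on $\PSL_2(11),\PSL_2(19),\PSL_2(29)$ if a uniform argument is not found—that no length-$p$ suborbit is simultaneously self-paired and equipped with a $2$-transitive $\AGL_1(p)$-action, so that the connectivity requirement $\l G_\a,\Nor_G(G_{\a\b})\r=G$ of Lemma~\ref{Sab64} fails.
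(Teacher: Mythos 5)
Your reduction is largely sound, and it follows a genuinely different route from the paper: you analyse the extension $1\to M\to G\to G/M\to 1$ directly, using that $M$ is cyclic of prime order and that the relevant socles have Schur multiplier $\C_2$, to split off a normal $T_0\cong\Soc(G/M)$ (or to land in $\SL_2(29)$). The cases you close this way are closed correctly: the order count $|M|=60/|V(\Ga_M)|$, the exclusion of $p=59$, the nonexistence of a subgroup of index $60$ in $\M_{12}$, and the impossibility of $\A_5\le\SL_2(29)$ are all fine. The paper instead makes one observation you never use: in both cases (e) and (f) the quotient $\Ga_M$ is a \emph{complete} graph and $M$ is cyclic, so $\Ga$ is a connected $2$-arc-transitive regular cyclic cover of $\K_{p+1}$ or $\K_{12}$, and the classification of such covers by Du, Maru\v{s}i\v{c} and Waller \cite[Theorem 1.1]{DMW} disposes of every case at once ($|M|\in\{3,5\}$ is impossible, and $|M|=2$ forces the bipartite graph $\K_{30,30}-30\K_2$, contradicting the simplicity of $H$). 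That citation is what makes the paper's proof three lines long.

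The genuine gap in your proposal is that the cases carrying all the content --- $G/M=\PGL_2(p)$ with $p\in\{11,19,29\}$ and $R=M\times T_0$, $T_0\cong\PSL_2(p)$ --- are not actually resolved. Your reduction of these to orbital graphs of $\PSL_2(p)$ on $60$ points with a self-paired suborbit of length $p$ is correct (including the argument that $G_\a\cong\AGL_1(p)$ is faithful on $\Ga(\a)$ and that $(T_0)_\a$ contains the normal Sylow $p$-subgroup of $G_\a$, so $\Ga$ is $T_0$-arc-transitive), but the conclusion is then deferred to an unexecuted \magma\ computation, hedged with ``if a uniform argument is not found.'' As written, nothing rules out the existence of such an orbital graph; e.g.\ for $p=11$ the group $\C_{11}$ really does have five suborbits of length $11$ on $[\PSL_2(11):\C_{11}]$, and one must actually check each (or invoke \cite{DMW}) to see that none yields a connected $(G,2)$-arc-transitive graph. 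Until that finite verification is carried out, the lemma is not proved in exactly the cases where it could conceivably fail.
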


\begin{proof}
First suppose that $\Ga_M$ satisfies (e) of Proposition~\ref{Quasi-case}. Then $\Ga_M=\K_{p+1}$ and $H=\A_5$, where $p \in \{ 11, 19, 29, 59 \}$. Then $|M|=|V(\Ga)|/|V(\Ga_M)|=60/(p+1)\in \{1,2,3,4,5\}$. By our assumption, $|M| \neq 1$.  By~\cite[Theorem 1.1]{DMW}, $|M|\neq3$ or $5$. If $|M|=2$, then $p=29$ and by \cite[Theorem 1.1]{DMW}, $\Ga=\K_{30,30}-30 \K_2$ is bipartite, which is not possible as $H$ is simple.

Next suppose that $\Ga_M$ satisfies (f) of Proposition~\ref{Quasi-case}. Then $\Ga_M=\K_{12}$ and $H=\A_5$. It follows that $|M|=|V(\Ga)|/|V(\Ga_M)|=60/12=5$ and so $M \cong \C_5$. Note that $\Ga$ is a normal $M$-cover of $\Ga_M=\K_{12}$, and $G/M$ lifts to a $2$-arc-transitive automorphism group $G$ of $\Ga$. We conclude from~\cite[Theorem 1.1]{DMW} that such a graph $\Ga$ dose not exist, a contradiction.
\end{proof}

\begin{lemma}
If $\Ga_M$ satisfies {\rm (a)}, {\rm (b)} or {\rm (d)} of Proposition~$\ref{Quasi-case}$,  then part {\rm (d.1)}, {\rm (d.2)} or {\rm (d.3)} of Theorem~$\ref{th:1}$ holds, respectively.
\end{lemma}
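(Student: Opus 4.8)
The plan is to split the statement into its three implications, write $X=G/M$ and $L=\Soc(X)$, and observe that only the first implication requires genuine work. Recall from the discussion preceding the lemma that $X$ is almost simple, $\Ga_M$ is $(X,2)$-arc-transitive, $\Ga$ is a normal $M$-cover of $\Ga_M$, and $HM/M\cong H\cong\A_n$ is transitive on $V(\Ga_M)$, so in particular $H\cap M=1$. For the implications (b)$\Rightarrow$(d.2) and (d)$\Rightarrow$(d.3) there is nothing to prove beyond a change of notation: the conclusions of parts~(d.2) and~(d.3) of Theorem~\ref{th:1} are word for word the hypotheses of parts~(b) and~(d) of Proposition~\ref{Quasi-case} with $X=G/M$, while the remaining clauses of Theorem~\ref{th:1}(d) (that $G/M$ is almost simple, that $\Ga_M$ is $(G/M,2)$-arc-transitive, and that $\Ga$ is a normal cover of $\Ga_M$) have already been established. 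So I would concentrate on (a)$\Rightarrow$(d.1).

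Assume case~(a), so that $L=\Soc(X)=HM/M\cong\A_n$. Its preimage in $G$ is the normal subgroup $HM$, which, since $H\cap M=1$, is a semidirect product $M{:}H$; moreover $L$ is transitive on $V(\Ga_M)$ with $|L_v|=|L|/|V(\Ga_M)|=|M|$, using $|L|=|H|$. Because $M$ is semiregular with at least three orbits, $|M|$ properly divides $|H|=|\A_n|$ (as noted above), so every section of $M$ has order dividing a proper divisor of $|\A_n|$ and hence $M$ has no section isomorphic to $H$. I would then apply Lemma~\ref{lm:miniM} to $B=HM=M{:}H$ when $n\ge9$: if in addition $|M|_r<r^{n-2}$ held for every prime $r$ dividing $|M|$, the lemma would give $HM=M\times H$; as $M$ has no section isomorphic to $\A_n$, the factor $H$ would be the only normal subgroup of $HM$ isomorphic to $\A_n$ (a normal subgroup $N\cong\A_n$ meets $H$ in $1$ or in $H$, and $N\cap H=1$ would produce a section of $M$ isomorphic to $\A_n$), so $H$ would be characteristic in $HM\trianglelefteq G$ and therefore normal in $G$, contrary to hypothesis. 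Hence some prime $r$ dividing $|M|$ satisfies $|M|_r\ge r^{n-2}$.

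The extraction of (d.1) from this inequality is then purely arithmetic. Since $|M|$ divides $|\A_n|=n!/2$, we have $|M|_r\le(n!/2)_r$. For odd $r$ the exponent of $r$ in $n!$ is less than $n/(r-1)\le n/2<n-2$ once $n\ge9$, so $(n!/2)_r\le(n!)_r<r^{n-2}$ and odd $r$ is ruled out; thus $r=2$. Writing $\sigma$ for the number of $1$'s in the binary expansion of $n$, the exponent of $2$ in $n!$ equals $n-\sigma$, so $(n!/2)_2=2^{\,n-\sigma-1}$, and $2^{\,n-\sigma-1}\ge 2^{\,n-2}$ forces $\sigma=1$, i.e.\ $n$ is a power of $2$; with $n\ge9$ this gives $n\ge16$. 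The same inequalities squeeze $|M|_2=2^{\,n-2}=(n!/2)_2=|\A_n|_2$, whence $|L_v|_2=|M|_2=|L|_2$ and $L_v$ contains a Sylow $2$-subgroup of $L=\A_n$; this is exactly Theorem~\ref{th:1}(d.1).

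It remains to dispose of $5\le n\le8$, where Lemma~\ref{lm:miniM} does not apply, in order to justify the bound $n\ge16$. For $n\in\{5,6,7\}$ I would rerun the proof of Lemma~\ref{lm:miniM} by hand: a nontrivial action of $\A_n$ on a chief factor of $M$ would force either an elementary abelian factor $\C_r^m$ with $\A_n\hookrightarrow\GL_m(r)$ and $r^m\le(n!/2)_r$, or a nonabelian factor with at least $n$ direct summands, and both are impossible because the minimal faithful module dimensions of $\A_5$, $\A_6$, $\A_7$ exceed what $|M|\le|\A_n|$ permits; hence $HM=M\times H$ and case~(a) does not occur. The genuinely delicate case, and the step I expect to be the main obstacle, is $n=8$: here $\A_8\cong\GL_4(2)$ acts faithfully on $\C_2^4$, so $HM$ can fail to be a direct product while $|M|_2<2^{6}$, and the arithmetic squeeze collapses. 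This case seems to require a separate argument, excluding it directly by showing that the induced local action $X_v^{\Ga_M(v)}$ cannot be $2$-transitive for the admissible subgroups $M$, or by explicit computation, rather than the uniform reasoning available for $n\ge9$.
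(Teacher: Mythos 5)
Your handling of implications (b) and (d), and of case (a) for $n\ge 9$, matches the paper's argument in substance: the paper likewise forms $B=HM=M{:}H$, uses $|M|=|\Soc(G/M)_v|$ and the fact that $|M|$ properly divides $|\A_n|$, rules out $B=M\times H$ (which would make $H$ characteristic in $B$ and hence normal in $G$), extracts a chief factor of $M$ on which $\A_n$ acts faithfully, and runs the same $p$-adic squeeze (odd $p$ killed by $(n!)_p<p^{n/(p-1)}$, then $p=2$ forcing $n\ge16$ a power of $2$ and $|M|_2=|\A_n|_2$) to land in (d.1). Your packaging of the $n\ge 9$ case via the contrapositive of Lemma~\ref{lm:miniM} is marginally tidier than the paper's, which re-opens the proof of that lemma to exhibit the chief factor explicitly, but the content is identical.

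The genuine gap is exactly where you flag it: $n=8$. There the embedding $\A_8\cong\GL_4(2)$ acting on $\C_2^4$ is arithmetically admissible, since $2^4\le |M|_2\le 2^6$ is consistent with $|M|$ properly dividing $|\A_8|$, so neither the order estimates (which do suffice for $n\in\{5,6,7\}$, as you sketch) nor the $n\ge9$ squeeze applies, and your proof ends with a guess at what a separate argument might look like rather than an argument. The paper closes this case computationally: it first invokes the classifications of $2$-arc-transitive nonnormal Cayley graphs on nonabelian simple groups of valency $3$, $4$ and $5$ in~\cite{XFWX05,DF19,DFZ17} to force $\val(\Ga)=\val(\Ga_M)\ge 6$, then uses \magma\ to enumerate all pairs $(G/M,(G/M)_v)$ with $\Soc(G/M)=\A_n$, $5\le n\le 8$, whose coset action admits a connected non-bipartite $(G/M,2)$-arc-transitive orbital graph of valency at least $6$, and verifies that for none of these pairs do a prime $p$ and integer $m$ exist with $p^m\le |\Soc(G/M)_v|_p$ and $\A_n\lesssim\GL_m(p)$. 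Without this step, or an equivalent direct exclusion of the $\A_8$-on-$\C_2^4$ configuration, the implication (a)~$\Rightarrow$~(d.1) is unproved, since (d.1) asserts in particular that $n\ge 16$.
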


\begin{proof}
It is clear that if $\Ga_M$ satisfies (b) or (d) of Proposition~\ref{Quasi-case} then part~(d.2) or~(d.3) of Theorem~$\ref{th:1}$ holds, respectively. Suppose that $\Ga_M$ satisfies (a) of Proposition~\ref{Quasi-case}. Then $\Soc(G/M)=HM/M\cong\A_n$. 
Let $B=HM=M{:}H$. Then $B/M=\Soc(G/M)$ and so $B \unlhd G$. 
Since $B\geq H$ is transitive on $V(\Ga)$, we have $|M|=|B|/|H|=|B|/|V(\Ga)|=|B_\a|$
and $G=BG_\a$, which implies that $G/B \cong G_\a/B_\a$.
Since $\Ga$ is a normal cover of $\Ga_M$, it follows that $|B_\a|=|(B/M)_v|$ divides $|B/M|=|H|=|\A_n|$, and 
\[
|M|=|B_\a|=|(B/M)_v|=|(\Soc(G/M))_v|.
\]
Recall that $|M|$ properly divides $|H|=|\A_n|$. 
If $B=M \times H$, then $H$ is characteristic in $B$ and hence normal in $G$, a contradiction. Hence $B\neq M \times H$.
Then as in the proof of Lemma \ref{lm:miniM}, there are two normal subgroups $M_j$ and $M_{j+1}$ of $B$ such that 
\[
1\leq M_j<M_{j+1}\leq M,\quad M_jH=M_j\times H,\quad M_{j+1}H \neq M_{j+1} \times H, 
\]
$M_{j+1}/M_j$ is a minimal normal subgroup of $B/M_j$, and $M_jH/M_j\cong\A_n$ acts faithfully on $M_{j+1}/M_j$ by conjugation. 

Suppose that $M_{j+1}/M_j\cong T^m$ for some nonabelian simple group $T$.
Then $2^m\leq|T^m|_2\leq|M|_2<|\A_n|_2<2^n$, and so $m<n$. 
Since $M_jH/M_j\cong\A_n$ acts faithfully on $M_{j+1}/M_j$, we have $\A_n\lesssim\Aut(T^m)=\Aut(T)\wr \Sy_m\cong (T^m.\Out(T)^m){:}\Sy_m$.
As $\Out(T)$ is solvable and $m<n$, we then conclude that $\A_n\le T^m$.
This is a contradiction as $|T^m|\leq|M|<|A_n|$.

Thus we conclude that $M_{j+1}/M_j\cong \ZZ_p^m$ for some prime $p$. Consequently, $\A_n \lesssim \GL_m(p)$.
Since $|M|=|(\Soc(G/M))_v|$, we have $p^m\leq|M|_p\leq|(\Soc(G/M))_v|_p$.
Suppose $5 \leq n \leq 8$.  Recall that $\Ga_M$ is a connected $(G/M,2)$-arc-transitive graph with $\val(\Ga_M)=\val(\Ga)$ and $|(G/M)_v|= |G_\a|$. Inspecting the results in~\cite{XFWX05,DF19,DFZ17} regarding $2$-arc-transitive nonnormal Cayley graphs on nonabelian simple groups of valency $3$, $4$ and $5$, respectively, we conclude that $\val(\Ga) \geq 6$.  
With the help of \magma~\cite{Magma}, we can easily find all the pairs $(G/M,(G/M)_v)$ such that $\Soc(G/M)=\A_n$ with $5\leq n \leq 8$ and the action of $G/M$ on $[G/M:(G/M)_v]$ admits a connected non-bipartite $(G/M,2)$-arc-transitive orbital graph of valency at least $6$. 
Then for each pair $(G/M,(G/M)_v)$, we check whether there exist prime $p$ and integer $m$ such that $p^m\leq|\Soc(G/M)_v|_p$ and $\Soc(G/M)\lesssim\GL_m(p)$. It turns out that no such prime $p$ and integer $n$ exist for any pair $(G/M,(G/M)_v)$, a contradiction.

Therefore, $n\geq 9$. By \cite[Proposition 5.3.7]{K-Lie}, $m\geq n-2$. Note that $(n!)_p<p^{n/(p-1)}$ (see \cite[Example 2.6.1]{DM-book} for example). If $p\geq 3$, then $p^{n-2}\leq p^m\leq|M|_p\leq |\A_n|_p<p^{n/(p-1)}<p^{n-2}$, a contradiction. Hence $p=2$, and $2^{n-2}\leq2^m\leq|M|_2\leq|\A_n|_2$. 
Note that $|\A_n|_2=2^{n-2}$ if $n$ is a $2$-power and $|\A_n|_2<2^{n-2}$ otherwise.
We conclude that $n\geq 16$ is a 2-power, and $|M|_2=|\A_n|_2$.
Thus $|\Soc(G/M)_v|_2=|M|_2=|\A_n|_2=|\Soc(G/M)|_2$, and so $\Soc(G/M)_v$ contains a Sylow $2$-subgroup of $\Soc(G/M)$, leading to part~(d.1) of Theorem~\ref{th:1}.
\end{proof}

\begin{lemma}
If $\Ga_M$ satisfies {\rm (c)} of Proposition~$\ref{Quasi-case}$, then part {\rm (b)} of Theorem~$\ref{th:1}$ holds.
\end{lemma}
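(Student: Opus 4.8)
The plan is to locate, inside the preimage $P$ of $\Soc(G/M)=\A_{n+2}$ in $G$, a normal subgroup $N\cong\A_{n+2}$ fulfilling the requirements of part~(b). First I would record the basic structure of $P$. Since $M$ is a \emph{maximal} intransitive normal subgroup and $P/M=\Soc(G/M)\neq1$, the group $P$ is transitive on $V(\Ga)$; it contains $H$ (because $HM/M\le\Soc(G/M)=P/M$) and meets $H$ trivially. Lifting the $(\Soc(G/M),2)$-arc-transitivity of $\Ga_M$ through the normal cover (Theorem~\ref{th:Praeger}) gives that $\Ga$ is $(P,2)$-arc-transitive, with $P_\a\cong\Soc(G/M)_v=:K$ acting \emph{faithfully} on $\Ga(\a)$; moreover the analysis underlying Proposition~\ref{Quasi-case}(c) identifies this local action with the natural $2$-transitive action of $K$ on $n+2$ points, so that $\val(\Ga)=n+2$.

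The next step is to force $M$ to centralise $H$. As $P$ is transitive and $H$ is regular, $|K|=|P_\a|=(n+1)(n+2)|M|$, and $K$ is a \emph{proper} $2$-transitive subgroup of $\A_{n+2}$ (it has index $|V(\Ga_M)|>1$ in $\Soc(G/M)$). For $n\ge9$ the degree $n+2\ge11$, so Lemma~\ref{lm:order2trans} yields $|K|<(n+2)(n+1)2^{n-2}$, whence $|M|<2^{n-2}$. In particular $|M|_r<r^{n-2}$ for every prime $r$ dividing $|M|$, and $M$ can have no section isomorphic to $\A_n$ (such a section would force $|M|\ge n!/2>2^{n-2}$). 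Applying Lemma~\ref{lm:miniM} to the split extension $MH=M{:}H$ then gives $MH=M\times H$, that is, $H\le\Cen_G(M)$. The finitely many cases $5\le n\le8$, lying outside the range of Lemma~\ref{lm:order2trans}, I would dispose of by direct computation, exactly as in the treatment of Proposition~\ref{Quasi-case}(a).

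With $H\le\Cen_G(M)$ in hand I would extract $N$. As $\Cen_G(M)\trianglelefteq G$, the image $\Cen_G(M)M/M$ is a nontrivial normal subgroup of the almost simple group $G/M$ (it contains $HM/M=\A_n$), hence contains $\Soc(G/M)=\A_{n+2}$; therefore $P\le\Cen_G(M)M$ and, by Dedekind's law, $P=M\,\Cen_P(M)$ with $\Cen_P(M):=P\cap\Cen_G(M)\trianglelefteq G$ and $\Cen_P(M)/\Z(M)\cong\A_{n+2}$. I would then set $N:=\Cen_P(M)'$, which is characteristic in $\Cen_P(M)$ and so normal in $G$, and is a \emph{perfect} central extension of $\A_{n+2}$, hence isomorphic to $\A_{n+2}$ or to the double cover $2.\A_{n+2}$. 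Because $H=[H,H]\le N$ and $H\cap\Z(N)\le H\cap M=1$ (note $\Z(N)\le M$), the subgroup $H\cong\A_n$ would be a complement to the centre lying over the natural $\A_n\le\A_{n+2}$; since the preimage of this natural $\A_n$ in $2.\A_{n+2}$ is the \emph{non-split} cover $2.\A_n$, the double cover is excluded and $N\cong\A_{n+2}$. Finally $N\cap M\trianglelefteq N$ is proper (as $N$ is transitive while $M$ is not), so $N\cap M=1$ and $P=N\times M$.

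It remains to check that $N$ witnesses part~(b). Here $H\le N$ is regular on $V(\Ga)$, giving the exact factorisation $N=HN_\a$ with $H\cap N_\a=1$; identifying $H$ (via the factorisation underlying Proposition~\ref{Quasi-case}(c)) with the stabiliser of an ordered pair in the natural action of $N=\A_{n+2}$, this says precisely that $N_\a$ is regular on ordered pairs, i.e.\ \emph{sharply} $2$-transitive on the $n+2$ points, so $n+2$ is a prime power. Since $N_\a\le P_\a=K$ acts faithfully on $\Ga(\a)$ and, via the normal cover together with Proposition~\ref{Quasi-case}(c), its local action on $\Ga(\a)$ coincides with this sharply $2$-transitive action on $n+2$ points, it follows that $\Ga$ is $(N,2)$-arc-transitive with $\val(\Ga)=n+2$. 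This is exactly part~(b). The main obstacle I anticipate is the central-extension step: first proving $H\le\Cen_G(M)$, which hinges on combining Lemma~\ref{lm:order2trans} with Lemma~\ref{lm:miniM}, and then ruling out $2.\A_{n+2}$ through the non-splitting of the cover over the natural $\A_n$; the small-degree cases $n\le8$ will also require separate computational attention.
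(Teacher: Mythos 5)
Your proposal is correct and follows essentially the same route as the paper's proof: bound $|M|<2^{n-2}$ via Lemma~\ref{lm:order2trans}, deduce $MH=M\times H$ from Lemma~\ref{lm:miniM}, pass to the derived subgroup of $\Cen_P(M)$ to obtain a perfect central extension of $\A_{n+2}$, exclude the double cover using the non-splitting over the $\A_n$ subgroup (the paper cites \cite[Proposition 2.6]{DFZ17} for the same fact), and read off sharp $2$-transitivity of $N_\a$ from the order count, with $n\le 8$ handled computationally. The only blemish is the slip ``meets $H$ trivially'' where you mean that $M$ meets $H$ trivially.
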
 

\begin{proof} 
Let $X =G/M$ and $Y =HM/M\cong\A_n$.  Then from Proposition~\ref{Quasi-case}(c) we see that $\Soc(X)\cong\A_{n+2}$, $\Soc(X)_v $ is $2$-transitive on $n+2$ points, the action of $(\Soc(X))_v$ on $\Ga_M(v)$ is faithful, and $\Ga_M$ is $(\Soc(X),2)$-arc-transitive. Since both $X$ and $Y$ are transitive on $V(\Ga_M)$, we have $|X:Y|=|X_v:Y_v|$, and so 
\begin{equation}\label{eq:orderofM}
|M|=\frac{|V(\Ga)|}{|V(\Ga_M)|}=\frac{|Y|}{|V(\Ga_M)|}=|Y_v|=\frac{|X_v|}{|X:Y|}\ \text{ divides }\ \frac{|X_v|}{(n+2)(n+1)}.
\end{equation}  

Suppose that $n\leq8$. As $\Soc(X)_v$ is a subgroup of $\A_{n+2}$ with a $2$-transitive permutation representation of degree $n+2$, by the classification of $2$-transitive groups, the candidates for $(n+2,\Soc(X)_v )$ are
\[
(7,\PSL_3(2)), (8,\ASL_3(2)),(9,\PSL_2(8)),(9,\PGaL_2(8)),
\]
\[(9,\AGaL_1(9) \cap \A_9),(10,\PSL_2(9) ), (10,\M_{10} ).
\]
For these candidates, computation in \magma~\cite{Magma} shows that such a $(\Soc(X),2)$-arc-transitive graph $\Ga_M$ does not exist except when $(n+2,\Soc(X)_v)=(9,\AGaL_1(9) \cap \A_9)$. However, for this exception, we have $(X,X_v)=(\A_9,\AGaL_1(9) \cap \A_9)$ or $(\Sy_9,\AGaL_1(9))$, which yields that $|M|=|X_v|/|X:Y|=1$, contradicting our assumption.  
 
Therefore, $n\geq 9$. Since $X_v$ is has a $2$-transitive permutation representation of degree $n+2$, we derive from~\eqref{eq:orderofM} and Lemma~\ref{lm:order2trans} that $|M|< 2^{n-2}$. Let $B=MH=M{:}H$. Recall that $|M|$ is a proper divisor of $|H|$.
Then $M$ has no section isomorphic to $H=\A_n$, and so Lemma~\ref{lm:miniM} implies that $B=M \times H$.
Let $L$ be a normal subgroup of $G$ containing $M$ such that $L/M=\Soc(X)\cong\A_{n+2}$, and let $C=\Cen_L(M)$. 
Then $H \leq C$ and $C\cap M \leq \Z(C)$. Since $CM/M \unlhd L/M \cong \A_{n+2}$  and  $CM/M \geq HM/M\cong  \A_n $, we have $CM/M=L/M$. 
Hence
\[
\Z(C)/(C\cap M) \unlhd C/(C\cap M) \cong CM/M=L/M \cong \A_{n+2}. 
\]
Since $\Z(C)/(C\cap M) $ is abelian, this implies that $\Z(C)/(C\cap M) =1$. Accordingly, 
\[
C/\Z(C)=C/(C\cap M)\cong \A_{n+2}, 
\]
and so
\[
C/\Z(C)=(C/\Z(C))'=C'\Z(C)/\Z(C)\cong C^{'}/(C'\cap \Z(C))=C'/\Z(C').
\]
It follows that $C'/\Z(C')\cong\A_n$ and $C=C'\Z(C)$, which yields $C'=(C'\Z(C))'=C''$. Hence $C'$ is a covering group of $\A_{n+2}$.

Suppose $\Z(C')\not=1$. Then as the Schur multiplier of $\A_n$ is $\ZZ_2$ (see for instance \cite[Theorem 5.1.4]{K-Lie}), we have $\Z(C')=\ZZ_2$ and $C'\cong 2.\A_n$. Since $H \leq C$ and $C/C'$ is abelian, we have $H\leq C'$. Thus $C'$ has a subgroup $H\times \Z(C') \cong \A_{n} \times \ZZ_2$. However, this is not possible by \cite[Proposition 2.6]{DFZ17}. 

Therefore, $\Z(C')=1$ and so $C'\cong \A_{n+2}$. 
Since $C'$ is characteristic in $C$ and $C$ is normal in $L$, the group $C'$ is normal in $L$, and so $L\geq M \times C'$. 
This together with $L/M\cong\A_{n+2}\cong C'$ gives $L=M\times C'$. 
Since $|M|< 2^{n-2}<(n+2)!/2=|C'|$, the group $C'$ is characteristic in $L$ and hence normal in $G$. 
Since $H/(H \cap C') \cong HC'/C' \leq C/C'$ is abelian and $H=\A_n$, we derive that $H=H\cap C'$. 
Thus $C'\geq H$ is transitive on $V(\Ga)$. Let $N=C'$. 
Then $N=HN_\a$ with $N\cong\A_{n+2}$ and $H=\A_n$, and 
\[
N_\a=\frac{|N|}{|V(\Ga)|}=\frac{|N|}{|H|}=\frac{|\A_{n+2}|}{|\A_n|}=(n+2)(n+1).
\] 
This implies that $N_\a$ is a sharply $2$-transitive subgroup of $\A_{n+2}$. 
By~\cite[Chapter XII,~Theorem 9.1]{Huppert82b} we then have $\Soc(N_\a)=\ZZ_p^d$ and $n+2=p^d$ for some prime $p$ and integer $d$. 
Since $\Soc(N_\a)$ is characteristic in $N_\a$ and $N_\a \unlhd L_\a \cong (L/M)_v$, the group $(L/M)_v$ has a normal subgroup $\ZZ_p^d $. 
Recall that $\Ga_M$ is $(L/M,2)$-arc-transitive, $(L/M)_v$ is a $2$-transitive subgroup of $L/M =\A_{n+2}$, and $(L/M)_v$ acts faithfully on $\Ga_M(v)$. Thereby we conclude that $\Soc((L/M)_v)\cong \ZZ_p^d$, $\val(\Ga)=\val(\Ga_M)=p^d=n+2$, the action of $N_\a $ on $\Ga(\a)$ is faithful, and $N_\a^{\Ga(\a)}$ is a sharply $2$-transitive permutation group of degree $p^d$. 
As a consequence, $\Ga$ is $(N,2)$-arc-transitive, and part~(b) of Theorem~\ref{th:1} holds.
\end{proof}

\section{Proof of Theorem~\ref{Thm2}}\label{sec3}


Let $q$, $G$, $K$ and $\Ga$ be as in Construction~\ref{Exa2}, and let $M=\AGL_1(q^2)$. Then $K=M{:}\l\tau\r$, and the stabilizer $M_0$ of $0\in\bbF_{q^2}$ in $M$ is $\GL_1(q^2)$ and hence generated by a single element $\omega\in M_0$. Since both $g\tau$ and $\tau g$ fix $0$ and send $v$ to $v^{-q}$ for $v\in\bbF_{q^2}^\times$, we have $g\tau=\tau g$. Moreover, $\omega^g=\omega^{-1}$, and so $g\in\Nor_G(M_0)$. Thus $g$ normalizes $M_0{:}\l\tau\r=K_0$, the stabilizer of $0\in\bbF_{q^2}$ in $K$.

Let $N=\Nor_G(K)$. Then $\Soc(K)=\Soc(M)\cong\mathbb{F}_{q^2}^+$ is a normal subgroup of $N$, and $N$ is $2$-transitive on $\bbF_{q^2}$ as $K\geq M$ is $2$-transitive on $\bbF_{q^2}$. By the classification of $2$-transitive groups, this implies that $N\le\AGaL_1(q^2)$. Then since $\AGaL_1(q^2)\le\Nor_G(K)=N$, we obtain $N=\AGaL_1(q^2)$. Similarly, we have $\Nor_G(M)=\AGaL_1(q^2)=N$. Write $q=p^f$ with prime $p$ and integer $f$.

Suppose that $g\in N$. Then as $g$ fixes both $0$ and $1$, we have $g\in N_{0,1}=\mathrm{Gal}(\bbF_{q^2}/\bbF_p)$. Since $\mathrm{Gal}(\bbF_{q^2}/\bbF_p)$ is cyclic and $g$ has order $2$, the same order as $\tau\in\mathrm{Gal}(\bbF_{q^2}/\bbF_p)$, it follows that $g=\tau$. However, this leads to $\omega^{-1}=\omega^g=\omega^\tau=\omega^q$, which implies that $\omega$ has order dividing $q+1$, contradicting $\l\omega\r=M_0=\GL_1(q^2)$.

Thus we conclude that $g\notin N$. As a consequence, $K^g\neq K$, and so $K\cap K^g<K$. 
From $g\in\Nor_G(K_0)$ we deduce that $K\cap K^g\ge K_0$. 
Then since $K_0$ is maximal in $K$, we conclude that $K\cap K^g=K_0$. 
Hence the action of $K$ on $[K:K\cap K^g]$ is permutation isomorphic to the natural action of $K$ on $\mathbb{F}_{q^2}$ as a subgroup of $\AGaL_1(q^2)$, which is $2$-transitive. In particular, $\Ga$ has valency $|K:K\cap K^g|=|K:K_0|=q^2$.

Since $\tau$ is an involution with $\mathrm{Fix}(\tau)=\bbF_q$, it is a product of $(q^2-q)/2$ transpositions. 
From the condition $q\equiv3\pmod{4}$ we derive that $(q^2-q)/2$ is odd. 
Thus $\tau\notin\Alt(\bbF_{q^2})$, and so $|K\cap\Alt(\bbF_{q^2})|=|K|/2=q^2(q^2-1)$.
Since $\omega$ is a $(q^2-1)$-cycle with $q$ odd, we have $\omega\notin\Alt(\bbF_{q^2})$. 
Hence $K\cap\Alt(\bbF_{q^2})\geq\Soc(K){:}\l\omega^2,\omega\tau\r$, which in conjunction with $|K\cap\Alt(\bbF_{q^2})|=q^2(q^2-1)=|\Soc(K){:}\l\omega^2,\omega\tau\r|$ implies that 
\[
K\cap\Alt(\bbF_{q^2})=\Soc(K){:}\l\omega^2,\omega\tau\r.
\]
Since $1^\omega$ is a generator of $\mathbb{F}_{q^2}^\times$ and $q$ is odd, the orbits of $1^\omega$ and $(1^\omega)^q$ under $\l\omega^2\r$, respectively, are the orbits of $\l\omega^2\r$ on $\mathbb{F}_{q^2}^\times$.
Then since $(1^\omega)^q=1^{\omega\tau}$, we obtain $\mathbb{F}_{q^2}^\times=1^{\l\omega^2\r}\cup(1^{\omega\tau})^{\l\omega^2\r}$, which implies that $\l\omega^2,\omega\tau\r$ is transitive on $\mathbb{F}_{q^2}^\times$.
Therefore, the action of $\Soc(K){:}\l\omega^2,\omega\tau\r$ on $\mathbb{F}_{q^2}$ is $2$-transitive, and so is the action of $K\cap\Alt(\bbF_{q^2})$ on $[K:K\cap K^g]$.

Since $G=K\Alt(\bbF_{q^2})$, the right multiplication action of $\Alt(\bbF_{q^2})$ on $[G:K]$ is transitive. 
This together with the $2$-transitivity of $K\cap\Alt(\bbF_{q^2})$ on $[K:K\cap K^g]$ implies that $\Ga$ is $(\Alt(\bbF_{q^2}),2)$-arc-transitive.
Since $K_{0,1}=\l\tau\r$, we have 
\[
K\cap\Alt(\bbF_{q^2})_{0,1}=\l\tau\r\cap\Alt(\bbF_{q^2})_{0,1}=1.
\] 
Observe that $|K||\Alt(\bbF_{q^2})_{0,1}|=2q^2(q^2-1)|\A_{q^2-2}|=|G|$. 
We then conclude that $\Alt(\bbF_{q^2})_{0,1}$ acts regularly on $[G:K]$, and so $\Ga$ is a Cayley graph on $\Alt(\bbF_{q^2})_{0,1}\cong\A_{q^2-2}$.

Now we prove that $\Ga$ is connected, that is, $\l K,g\r=G$. Let $X=\l K,g\r$. Then $X$ is $2$-transitive on $\bbF_{q^2}$. Since $\tau\notin\Alt(\bbF_{q^2})$ and $\tau\in K\le X$, the group $X$ is not contained in $\Alt(\bbF_{q^2})$. Then by the classification of $2$-transitive groups, either $X=\Sym(\bbF_{q^2})$, or $X\le Y:=\AGL_{2f}(p)$. Suppose for a contradiction that the latter occurs. Then $g\in X_0\le Y_0$ and $K_0\le X_0\le Y_0$. As $g$ normalizes $M_0$, it follows that $g\in\Nor_{Y_0}(M_0)$. However, since $M_0$ is a Single cycle in $Y_0=\GL_{2f}(p)$, we see from~\cite[Theorem~7.3]{Huppert1967} that $\Nor_{Y_0}(M_0)=\GaL_1(q^2)=N_0$, and so $g\in N_0$, contradicting the conclusion that $g\notin N$.
Thus $X=\Sym(\bbF_{q^2})=G$, which implies that $\Ga$ is connected. This completes the proof of Theorem~\ref{Thm2}.

\section*{Acknowledgements}
This work was partially supported by NSFC 12061092.

\end{document}